\newcommand{\annotation}[1]{}
\newcommand{\aut}{\textnormal{Aut}}
\newcommand{\autinf}[1]{\textnormal{Aut}^{(\infty)}(\sigma_{#1})}
\newcommand{\autk}[2]{\textnormal{Aut}^{(#1)}(\sigma_{#2})}
\newcommand{\inv}[1]{\textnormal{Inv}(\sigma_{n})}
\newcommand{\infinv}[1]{\textnormal{Inv}^{\infty}(\sigma_{n})}
\newcommand{\simpinf}[1]{\textnormal{Simp}^{(\infty)}(\Gamma_{#1})}
\newcommand{\simpaltinf}[1]{\textnormal{Simp}_{\textnormal{ev}}^{(\infty)}(\Gamma_{#1})}
\newcommand{\sym}{\textnormal{Sym}}
\newcommand{\simp}{\textnormal{Simp}}
\newcommand{\alt}{\textnormal{Alt}}
\newcommand{\ev}{\textnormal{ev}}
\DeclarePairedDelimiter\floor{\lfloor}{\rfloor}
\newtheorem*{theorem*}{Theorem}
\newtheorem{theorem}{Theorem}
\newtheorem{corollary}[theorem]{Corollary}
\newtheorem{proposition}[theorem]{Proposition}
\newtheorem{lemma}[theorem]{Lemma} 
\theoremstyle{remark}
\newtheorem{remark}[theorem]{Remark}
\theoremstyle{definition}
\newtheorem{definition}[theorem]{Definition}
\newtheorem*{definition*}{Definition}
\newtheorem*{notation*}{Notation}
\newtheorem{example}[theorem]{Example}
\author{Scott Schmieding}
\address{University of Denver, Denver, CO 80208 USA}
\email{scott.schmieding@du.edu}
\begin{document}

\title{Local $\mathcal{P}$ entropy and stabilized automorphism groups of subshifts}

\keywords{automorphism, shift of finite type, entropy.}
\subjclass[2010]{Primary 37B10}
\begin{abstract}
For a homeomorphism $T \colon X \to X$ of a compact metric space $X$, the stabilized automorphism group $\aut^{(\infty)}(T)$ consists of all self-homeomorphisms of $X$ which commute with some power of $T$. Motivated by the study of these groups in the context of shifts of finite type, we introduce a certain entropy for groups called local $\mathcal{P}$ entropy. We show that when $(X,T)$ is a non-trivial mixing shift of finite type, the local $\mathcal{P}$ entropy of the group $\aut^{(\infty)}(T)$ is determined by the topological entropy of $(X,T)$. We use this to give a complete classification of the isomorphism type of the stabilized automorphism groups of full shifts.
\end{abstract}

\maketitle
\tableofcontents

\section{Introduction}
By a dynamical system $(X,T)$ we mean a self-homeomorphism $T$ of a compact metric space $X$. An automorphism of a dynamical system $(X,T)$ is a homeomorphism $\phi \colon X \to X$ such that $\phi T = T \phi$, and the collection of all homeomorphisms of $(X,T)$ forms a group (under composition) which we denote by $\aut(T)$. \\
\indent When $(X,\sigma_{X})$ is a non-trivial mixing shift of finite type, the automorphism group $\aut(\sigma_{X})$ is an infinite, countable, residually finite, non-amenable group (see \cite{BLR88}). It is known to contain a wide variety of subgroups, including isomorphic copies of every finite group, as well as free groups \cite{BLR88}. First investigated in the case of full shifts by Hedlund and others in the late 60's, these groups have since been heavily studied (see \cite[Sec. 7]{BSstable} for a brief survey). Despite this, much of the structure of $\aut(\sigma_{X})$ remains a mystery. Exemplifying this, a fundamental question that remains open is the following: if $(X,\sigma_{X})$ and $(Y,\sigma_{Y})$ are mixing shifts of finite type with $\aut(\sigma_{X})$ isomorphic as a group to $\aut(\sigma_{Y})$, must $(X,\sigma_{X})$ be topologically conjugate to either $(Y,\sigma_{Y})$ or $(Y,\sigma_{Y}^{-1})$?\footnote{This appears as Question 4.1 in \cite{BLR88}.} A central case of this problem is when $(X,\sigma_{X}) = (X_{n},\sigma_{n})$ is the full shift on $n$ symbols, where the question can be rephrased as follows: for which $m$ and $n$ are $\aut(\sigma_{m})$ and $\aut(\sigma_{n})$ isomorphic as groups? The only non-trivial results addressing this problem are elementary consequences of Ryan's Theorem \cite{Ryan1,Ryan2}, as observed in \cite[Sec. 4]{BLR88}, where it is noted that (among other cases) the groups $\aut(\sigma_{2})$ and $\aut(\sigma_{4})$ can not be isomorphic. These techniques essentially exploit the structure of algebraic roots of the shift map, together with Ryan's Theorem identifying the center of $\aut(\sigma_{n})$ as the subgroup generated by $\sigma_{n}$. However, the question of whether, for example, $\aut(\sigma_{2})$ and $\aut(\sigma_{3})$ are isomorphic or not remains a particularly intriguing problem.\\
\indent In \cite{HKS}, a certain stabilization of the automorphism group of a system was introduced. Given $(X,T)$, one defines the stabilized automorphism group to be
$$\aut^{(\infty)}(T) = \bigcup_{n=1}^{\infty}\aut(T^{n})$$
where the union is taken in the group of all self-homeomorphisms of $X$. A study of the group $\aut^{(\infty)}(\sigma_{X})$ for $(X,\sigma_{X})$ a shift of finite type was undertaken in \cite{HKS}, where the following was proved:
\begin{theorem*}[\cite{HKS}]
Let $(X,\sigma_{X})$ be a non-trivial mixing shift of finite type.
The dimension representation\footnote{For background on the dimension representation, see either \cite{BLR88} or \cite{HKS}; the dimension representation does not play a role in the present paper.} $\pi_{X}$ of $\aut(\sigma_{X})$ extends to a stabilized dimension representation $\pi^{(\infty)}_{X}$ of $\aut^{(\infty)}(\sigma_{X})$, and the abelianization of $\aut^{(\infty)}(\sigma_{X})$ factors through $\pi^{(\infty)}_{X}$. If $(X_{n},\sigma_{n})$ is a full shift then $\pi_{X_{n}}^{(\infty)}$ maps onto a free abelian group of rank $\omega(n)$ (where $\omega(n)$ is the number of distinct prime divisors of $n$) and coincides with the abelianization map for $\autinf{n}$; in particular, if $\autinf{m}$ is isomorphic to $\autinf{n}$, then $\omega(m) = \omega(n)$. Moreover, for a full shift $(X_{n},\sigma_{n})$, the kernel of $\pi^{(\infty)}_{X_{n}}$ is simple.
\end{theorem*}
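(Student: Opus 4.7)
The plan is to build $\pi^{(\infty)}_X$ by extending $\pi_X$ level by level. Recall that $\pi_X$ sends $\phi \in \aut(\sigma_X)$ to the induced automorphism $\hat\phi$ of the dimension triple $(G, G_+, \hat\sigma_X)$. For $\phi \in \aut(\sigma_X^N)$, I would apply the same construction to the higher-power SFT $(X, \sigma_X^N)$: its dimension group is canonically identified with $G$ (a cofinal subsequence of the same stationary direct limit) but with the distinguished endomorphism replaced by $\hat\sigma_X^N$. This yields $\pi^{(N)}_X(\phi) \in \text{Cent}_{\aut(G,G_+)}(\hat\sigma_X^N)$, and values at different witness levels $N$ and $M$ are reconciled at a common multiple $NM$, so the limit $\pi^{(\infty)}_X$ is well-defined on $\bigcup_N \aut(\sigma_X^N)$.

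For the factoring claim, the image of $\pi^{(\infty)}_X$ lies in the union $\bigcup_N \text{Cent}_{\aut(G, G_+)}(\hat\sigma_X^N)$. For a non-trivial mixing SFT, the Perron-Frobenius structure of the stationary dimension group forces this union to be abelian, since any two order automorphisms each commuting with some power of $\hat\sigma_X$ must also commute with each other. Consequently the commutator subgroup of $\aut^{(\infty)}(\sigma_X)$ is contained in $\ker \pi^{(\infty)}_X$, giving the required factoring.

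For the full shift $(X_n, \sigma_n)$, $G = \Z[1/n]$ with $\hat\sigma_n$ equal to multiplication by $n$. The ordered automorphism group $\aut(G, G_+) = \Z[1/n]^{\times}_+$ is freely generated by multiplication by the distinct prime divisors of $n$, so it is isomorphic to $\Z^{\omega(n)}$; being abelian, it coincides with the centralizer of any power of $\hat\sigma_n$, and the image of $\pi^{(\infty)}_{X_n}$ sits inside it. Surjectivity follows from a standard realization theorem: for each prime $p \mid n$ one can construct $N$ and $\phi \in \aut(\sigma_n^N)$ with dimension action $\times p$, e.g. via an elementary strong-shift equivalence of $\sigma_n^N$ rescaling the class by $p$ locally. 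The ``coincides with abelianization'' statement then amounts to $\ker \pi^{(\infty)}_{X_n} \subseteq [\aut^{(\infty)}(\sigma_n), \aut^{(\infty)}(\sigma_n)]$, which follows from the simplicity of $K := \ker \pi^{(\infty)}_{X_n}$: since $K$ contains all finite-order elements of $\aut^{(\infty)}(\sigma_n)$ (their dimension actions are torsion, hence trivial) it is infinite, hence non-abelian, and so $K = [K, K] \subseteq [\aut^{(\infty)}(\sigma_n), \aut^{(\infty)}(\sigma_n)]$. The $\omega(n)$ invariant is then immediate from the preservation of abelianization under group isomorphism.

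The hardest step is the simplicity of $K$. I would pursue a marker-based strategy in the spirit of Boyle-Lind-Rudolph, crucially exploiting the stabilized setting. Given a nontrivial $\psi \in K$, select $N$ with $\psi \in \aut(\sigma_n^N)$ and pass to an appropriate $N$-block recoding so that $\psi$ acts by a nontrivial permutation on some finite pattern; then use marker automorphisms in $\aut(\sigma_n^N)$ to produce conjugates of $\psi$ implementing arbitrary inert block permutations on long finite windows. These generate the inert subgroup of $\aut(\sigma_n^N)$, and letting $N$ vary recovers all of $K$. The stabilization is essential: the much richer supply of markers in $\aut(\sigma_n^N)$ for large $N$ is precisely what allows the simplicity argument to close, whereas the analogous question for $\aut(\sigma_n)$ remains open.
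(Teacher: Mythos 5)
This statement is quoted from \cite{HKS} as background; the present paper contains no proof of it, so there is no internal argument against which to check yours.

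On its own merits, your sketch is a plausible reconstruction of the general shape of the HKS argument: extend $\pi_{X}$ level-by-level with compatibility at common multiples, identify the target for full shifts as $\aut(\mathbb{Z}[1/n],\mathbb{Z}[1/n]_{+})\cong\mathbb{Z}^{\omega(n)}$, and derive coincidence with the abelianization from simplicity of the kernel. However, three steps are under-justified. First, the assertion that for a general non-trivial mixing SFT the union $\bigcup_{N}\mathrm{Cent}_{\aut(G,G_{+})}(\hat\sigma_{X}^{N})$ is abelian is stated without a real argument; simplicity of the Perron eigenvalue constrains the action on one eigenline but does not by itself commute two order automorphisms, and when $A$ has repeated non-Perron eigenvalues the centralizer of $A$ inside $\aut(G,G_{+})$ need not be obviously abelian. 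You should also note that the theorem's phrase ``abelianization factors through $\pi^{(\infty)}_{X}$'' literally asserts $\ker\pi^{(\infty)}_{X}\subseteq[\aut^{(\infty)}(\sigma_{X}),\aut^{(\infty)}(\sigma_{X})]$, whereas abelianness of the image yields the reverse inclusion; the two coincide for full shifts via your simplicity argument, but they are distinct claims in general, so you should be explicit about which one you are proving. Second, surjectivity onto $\mathbb{Z}^{\omega(n)}$ is waved through via an unspecified ``elementary strong-shift equivalence rescaling by $p$''; this is a range-of-the-dimension-representation statement and needs an actual construction (or a citation to one). Third, and most importantly, simplicity of the kernel is the crux of the whole theorem and is handled in a single impressionistic paragraph; the intuition that stabilization provides a rich marker supply is right, but showing the normal closure of an arbitrary non-trivial kernel element is the entire kernel rather than merely a large subgroup is exactly where the real work lies, and the sketch does not make contact with those details.
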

This result distinguishes, up to isomorphism, various stabilized automorphism groups using their abelianizations, but leaves open many cases: for example $\autinf{m}$ and $\autinf{n}$ when $\omega(m) = \omega(n)$. In particular, the question of whether e.g. $\autinf{2}$ and $\autinf{3}$ are isomorphic remained out of reach. To give some indication about the scope of the problem, note that for $m,n$ with $\omega(m) = \omega(n)$, the groups $\autinf{m}$ and $\autinf{n}$ have isomorphic abelianizations, and are each extensions of their abelianizations by some countable infinite simple group.\\
\indent Motivated by the study of these stabilized automorphism groups, we introduce here a certain kind of entropy (in fact a whole family of entropies) for groups which we call local $\mathcal{P}$ entropy. Regarding this entropy are some important comments. First, it is defined not just in terms of the group but also using an additional piece of data in the form of a distinguished infinite cyclic subgroup of the group. Second, it is defined with respect to some chosen class $\mathcal{P}$ of finite groups which is closed under isomorphism. While this choice allows for some flexibility, as a downside, depending on the class of finite groups, the group, and the distinguished cyclic subgroup in question, the entropy may or may not exist. When it does exist, it satisfies some useful basic properties. Among these, one of the key properties is that it is an invariant of the group and choice of distinguished cyclic subgroup: if there is an isomorphism between two groups taking the distinguished cyclic subgroup isomorphically onto the other, then they must each have the same entropy.\\
\indent To give a brief idea of what this local $\mathcal{P}$ entropy attempts to capture, suppose $G$ is a group with some distinguished element $g \in G$ of infinite order, and consider the map $T_{g} \colon G \to G$ given by conjugation by $g$, so $T_{g}(h) = g^{-1}hg$. To study the behavior of $T_{g}$, we may try to consider the growth rate of the fixed point subgroups $\textnormal{Fix}(T_{g}^{n}) = C(g^{n})$, where $C(g^{n})$ denotes the centralizer of $g^{n}$, but are faced with the problem that the $C(g^{n})$'s may be infinite. To proceed, given a class $\mathcal{P}$ of finite groups which is closed under isomorphism, we approximate each of the $C(g^{n})$'s with finite $\mathcal{P}$ groups $H_{n}$, and consider the doubly exponential\footnote{The choice of doubly exponential as opposed to just exponential is motivated by our applications; see Remark \ref{remark:altentropy}.} growth rate of the $H_{n}$'s. The local $\mathcal{P}$ entropy of $(G,g)$ is then the supremum of these growth rates over all such $\mathcal{P}$-approximations. \\
\indent Our motivation here is primarily the use of this entropy as an invariant, and the main application is its use in studying the stabilized automorphism groups of shifts of finite type. For these groups, we use the entropy defined by the class of finite groups which are products of finite simple non-abelian groups satisfying a certain growth condition.\\
\indent The first main result is that for the stabilized automorphism group of a non-trivial mixing shift of finite type, we can recover the topological entropy of the shift of finite type from the local $\mathcal{P}$ entropy of the stabilized group with a distinguished element.

\begin{theorem}\label{thm:entropycalcintro}
Let $(X,\sigma_{X})$ be a non-trivial mixing shift of finite type. For a certain class $\mathcal{C}_{\sigma}$ of finite groups (depending on $(X,\sigma_{X})$) and any $k \ge 1$, the local $\mathcal{C}_{\sigma}$ entropy of $\left(\aut^{(\infty)}(\sigma_{X}),\sigma_{X}^{k}\right)$ is given by
$$h_{\mathcal{C}_{\sigma}}\left(\aut^{(\infty)}(\sigma_{X}),\sigma_{X}^{k}\right) = h_{top}(\sigma_{X}^{k}).$$

\end{theorem}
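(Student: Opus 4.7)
The plan is to unpack the definition of local $\mathcal{C}_\sigma$ entropy in this setting and then establish matching lower and upper bounds. Since the distinguished element is $\sigma_X^k$, its $n$-th power in $\aut^{(\infty)}(\sigma_X)$ is $\sigma_X^{kn}$, and its centralizer in $\aut^{(\infty)}(\sigma_X)$ is exactly $\aut(\sigma_X^{kn})$. So the computation reduces to controlling the asymptotic doubly-exponential growth rate, as $n \to \infty$, of the largest finite subgroup $H_n \subseteq \aut(\sigma_X^{kn})$ lying in $\mathcal{C}_\sigma$, passed through the supremum over all such $\mathcal{C}_\sigma$-approximations.

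For the lower bound I would construct, for each large $n$, an explicit embedded finite simple non-abelian group, typically an alternating group $\alt(N_n)$, inside $\aut(\sigma_X^{kn})$ whose size is governed by periodic structure. A marker construction in the spirit of Boyle--Lind--Rudolph, adapted to the higher-power SFT $(X,\sigma_X^{kn})$, should produce $N_n$ growing essentially as $e^{n\cdot h_{top}(\sigma_X^k)}$, since this is the rate at which independent ``markable'' regions of specified structure appear in a mixing SFT of topological entropy $n\cdot h_{top}(\sigma_X^k) = kn\cdot h_{top}(\sigma_X)$. Provided such an alternating group (or an appropriate product of simples built from it) belongs to $\mathcal{C}_\sigma$ for $n$ large, which should be a condition built into the definition of $\mathcal{C}_\sigma$, Stirling then gives $\log\log|\alt(N_n)| \sim \log N_n \sim n\cdot h_{top}(\sigma_X^k)$, producing the required lower bound after dividing by $n$.

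The upper bound is the main obstacle. Here one must show that every finite $\mathcal{C}_\sigma$-subgroup $H \subseteq \aut(\sigma_X^{kn})$ satisfies $\log\log|H| \le (1+o(1))\cdot n \cdot h_{top}(\sigma_X^k)$. The natural strategy is to exhibit a sufficiently faithful action of (the simple constituents of) $H$ on a finite invariant whose cardinality is sharply controlled by topological entropy; the set of $\sigma_X^{kn}$-periodic points of some specified period is the natural candidate, since $|\mathrm{Fix}(\sigma_X^{kn})| \asymp e^{n\cdot h_{top}(\sigma_X^k)}$. The growth condition defining $\mathcal{C}_\sigma$ should then convert an embedding of each simple factor into a symmetric group on $\asymp e^{n\cdot h_{top}(\sigma_X^k)}$ letters into the desired $\log\log$-type bound on the entire product. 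The delicate point is that an arbitrary automorphism in $\aut(\sigma_X^{kn})$ need not act faithfully on any fixed finite invariant, so $\mathcal{C}_\sigma$ must be tailored so that a useful reduction to such an action is forced by membership in the class; making this reduction precise is the technical heart of the argument.

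Combining the two bounds yields the asserted equality. Mixing is used in both directions: for the lower bound it guarantees the rich supply of periodic structure on which the marker construction can operate, and for the upper bound it forces periodic-point counts, and the size of the relevant finite invariants, to be sharply controlled by the topological entropy.
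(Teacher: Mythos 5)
Your overall architecture (lower bound via embedded alternating-type groups whose sizes track periodic structure, upper bound via action on periodic points) matches the paper's, and your lower-bound sketch is essentially correct: the paper realizes your ``markable regions'' concretely as the even simple graph symmetries $\simp_{\ev}(\Gamma_A^{(kn)})$, products of alternating groups on edge sets whose cardinalities are entries of $A^{kn}$, so Perron--Frobenius plus Stirling give $\log\log|\simp_{\ev}(\Gamma_A^{(kn)})| \sim n\,h_{\htop}(\sigma_A^k)$.

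The upper bound is where your proposal has a genuine gap, and you have in fact flagged the gap yourself without closing it. You suggest that $\mathcal{C}_\sigma$ should be ``tailored so that a useful reduction to a faithful action on periodic points is forced.'' No such reduction is forced, and this is not how the paper resolves the difficulty. The paper's mechanism is an all-or-nothing dichotomy coming from simplicity, not a faithful-action theorem. Let $H$ be a $\sigma_X^k$-locally $\mathcal{C}_\sigma$ subgroup and $H^{(n)} = H \cap C(\sigma_X^{kn}) = \prod_{l=1}^r H_l^{(n)}$ a product of comparably-sized non-abelian simple groups. The restriction homomorphism $\rho_n \colon \aut(\sigma_X^{kn}) \to \sym(P_1(\sigma_X^{kn}))$ has a kernel which is a normal subgroup, and the two group-theoretic lemmas (a simple subgroup meeting a normal subgroup non-trivially is contained in it; a normal subgroup of a product of non-abelian simples contains every factor it projects onto non-trivially) show that if $|H^{(m)}|$ exceeds $\left(p_m(\sigma_X^k)!\right)^{rD}$ then every simple factor meets $\ker\rho_m$, hence $H^{(m)} \subset \ker\rho_m$ entirely. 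The contradiction is then extracted not from the class $\mathcal{C}_\sigma$ but from condition (3) in the definition of a locally $\mathcal{P}$ subgroup: there is a fixed non-identity $\alpha \in H \cap C(\sigma_X^k)$, which lies in every $H^{(m)}$, and density of periodic points gives $\bigcap_s \ker\rho_{a_s} = \{e\}$, so $\alpha$ cannot lie in $\ker\rho_{b_i}$ for an infinite sequence $b_i$. Hence $|H^{(n)}| \le \left(p_n(\sigma_X^k)!\right)^{rD}$ eventually, and the $\log\log$ of this is $(1+o(1))\,n\,h_{\htop}(\sigma_X^k)$. This dichotomy argument, using the normality of $\ker\rho_m$ plus the product-of-simples structure plus the fixed witness in $H\cap C(\sigma_X^k)$, is the missing technical heart; without it, your appeal to ``converting each simple factor into a symmetric group on $\asymp e^{nh}$ letters'' does not go through, because no such embedding exists in general.
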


In the case of full shifts, we can let $\mathcal{C}_{\sigma}$ in Theorem \ref{thm:entropycalcintro} be the class of all finite simple groups.\\
\indent We also introduce a group-theoretic tool which we call ghost centers. These are essentially subgroups of a group which behave like the center with respect to the collection of finitely generated subgroups of the given group. Using this tool together with Theorem \ref{thm:entropycalcintro}, we prove the following.
\begin{theorem}\label{thm:isoentropyintroduction}
Let $(X,\sigma_{X})$, $(Y,\sigma_{Y})$ be non-trivial mixing shifts of finite type, and suppose there is an isomorphism of stabilized automorphism groups
$$\aut^{(\infty)}(\sigma_{X}) \cong \aut^{(\infty)}(\sigma_{Y}).$$
Then
$$\frac{h_{top}(\sigma_{X})}{h_{top}(\sigma_{Y})} \in \mathbb{Q}.$$
\end{theorem}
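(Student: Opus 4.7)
The plan is to combine Theorem~\ref{thm:entropycalcintro} with the ghost-center tool announced in the introduction. Theorem~\ref{thm:entropycalcintro} provides, for every $k \ge 1$, the identity
$$h_{\mathcal{C}_{\sigma}}\!\left(\autinf{X}, \sigma_X^k\right) = k \cdot h_{top}(\sigma_X),$$
and similarly for $Y$. The invariance property of local $\mathcal{P}$ entropy asserts that whenever an isomorphism $\Phi \colon G_1 \to G_2$ carries a distinguished infinite cyclic subgroup $\langle g_1 \rangle$ isomorphically onto $\langle g_2 \rangle$, the two entropies agree. Thus, given an isomorphism $\Phi \colon \autinf{X} \to \autinf{Y}$, it suffices to produce integers $k \ge 1$ and $l \ne 0$ with $\Phi(\sigma_X^k) = \sigma_Y^l$; then $\Phi$ restricts to an isomorphism $\langle \sigma_X^k \rangle \to \langle \sigma_Y^l \rangle$ of infinite cyclic groups, and the rationality will follow.

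To produce such $k$ and $l$, I use the ghost center. A natural working definition is the following: let the \emph{ghost center} $\mathcal{Z}_{gh}(G)$ of a group $G$ consist of those $g \in G$ such that for every finitely generated subgroup $H \le G$, some positive power $g^{n(H)}$ centralizes $H$. This set is manifestly invariant under any group isomorphism. Moreover, $\langle \sigma_X \rangle \subseteq \mathcal{Z}_{gh}(\autinf{X})$: any finitely generated $H \le \autinf{X}$ is contained in some $\aut(\sigma_X^n)$ by the very definition of the stabilized group, and $\sigma_X^n$ then commutes with every element of $H$.

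The key structural step is the converse up to commensurability: every element of $\mathcal{Z}_{gh}(\autinf{X})$ has a positive power lying in $\langle \sigma_X \rangle$. To establish this one must, for each $g \in \autinf{X}$ no positive power of which lies in $\langle \sigma_X \rangle$, exhibit a finitely generated subgroup $H \le \autinf{X}$ that is not centralized by any positive power of $g$. I expect such $H$ to be built via marker-style constructions exploiting the abundance of finite subgroups and independent dynamical pieces inside $\autinf{X}$, designed so that the centralizer of $H$ is forced to live inside $\langle \sigma_X \rangle$. This identification of the ghost center is the main technical obstacle; it is essentially a rigidity statement about stabilized automorphism groups that deserves a self-contained proof.

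Granting the ghost-center description, $\Phi(\sigma_X) \in \mathcal{Z}_{gh}(\autinf{Y})$ has some positive power in $\langle \sigma_Y \rangle$, so there exist $k \ge 1$ and $l \in \mathbb{Z} \setminus \{0\}$ with $\Phi(\sigma_X^k) = \sigma_Y^l$. Choosing a class $\mathcal{C}$ of finite groups that simultaneously computes the local entropy for both shifts (for example the intersection of the classes $\mathcal{C}_{\sigma_X}$ and $\mathcal{C}_{\sigma_Y}$ furnished by Theorem~\ref{thm:entropycalcintro}, which still contains enough large simple factors to capture the entropy on each side), the invariance of local $\mathcal{C}$ entropy together with Theorem~\ref{thm:entropycalcintro} and the identity $h_{top}(\sigma_Y^l) = |l| \cdot h_{top}(\sigma_Y)$ yields
$$k \cdot h_{top}(\sigma_X) = h_{\mathcal{C}}\!\left(\autinf{X}, \sigma_X^k\right) = h_{\mathcal{C}}\!\left(\autinf{Y}, \sigma_Y^l\right) = |l| \cdot h_{top}(\sigma_Y),$$
so $h_{top}(\sigma_X)/h_{top}(\sigma_Y) = |l|/k \in \mathbb{Q}$, as required.
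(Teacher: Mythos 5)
Your overall strategy matches the paper exactly: show that any isomorphism $\Phi \colon \aut^{(\infty)}(\sigma_X) \to \aut^{(\infty)}(\sigma_Y)$ must carry some power of $\sigma_X$ to some power of $\sigma_Y$, then invoke the leveled-isomorphism invariance of local $\mathcal{P}$ entropy together with the entropy computation. Your element-level reformulation of the ghost center (a set of elements with a centralizing-power property) is a minor simplification of the paper's subgroup-level version and the upper inclusion $\langle \sigma_X \rangle \subseteq \mathcal{Z}_{gh}$ you give is exactly the paper's easy direction.

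There are, however, two genuine gaps. First, and most seriously, you explicitly punt on the converse characterization of the ghost center, i.e. that any ghost-center element has a power in $\langle \sigma_X \rangle$. This is not something one should expect to re-derive from scratch via marker constructions: the paper proves it by citing Kopra's finitary Ryan's theorem, which asserts that for a nontrivial mixing SFT there is a \emph{finitely generated} subgroup $K$ of $\aut(\sigma_X^j)$ whose centralizer is exactly $\langle \sigma_X^j \rangle$. Once you have such a $K$, the converse is a two-line argument: a ghost-center element $\gamma \in \aut(\sigma_X^j)$ must have $\langle \gamma \rangle \cap C(K) \ne \{e\}$, forcing some power of $\gamma$ into $\langle \sigma_X^j \rangle$. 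Without this ingredient the ghost-center approach has no teeth. (One also needs, as the paper notes via Lind's Perron-number results, that $\sigma_X$ has only finitely many root orders, so that maximal cyclic ghost centers exist; your element-level set formulation avoids this, but it is worth being aware that the paper routes through maximal cyclic ghost centers.)

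Second, your remark that one can take ``the intersection of the classes $\mathcal{C}_{\sigma_X}$ and $\mathcal{C}_{\sigma_Y}$'' will typically fail. The paper's class $\mathcal{C}_\sigma$ is a class $PS_{C,D,r}$ whose integer parameter $r$ is the square of the number of vertices of a presenting graph. By Krull--Remak--Schmidt, $PS_{C,D,r}$ and $PS_{C',D',r'}$ are \emph{disjoint} when $r \ne r'$, so the intersection can be empty. The paper deals with this by first using state splittings to produce, for the shift presented on fewer vertices, a conjugate presentation of a power on the same number of vertices as the other shift, and then choosing the constants $C,D$ (namely taking the min of the lower constants and the max of the upper constants) so that the single class $PS_{C,D,r^2}$ is admissible for both. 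Your proof would need to incorporate this step; as written the appeal to a common class is unjustified.
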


For a mixing shift of finite type $(X_{A},\sigma_{A})$ presented as the edge shift on some graph with primitive adjacency matrix $A$, the topological entropy is given by $h_{top}(\sigma_{A}) = \log \lambda_{A}$ where $\lambda_{A}$ is the Perron-Frobenius eigenvalue of the matrix $A$. Theorem \ref{thm:isoentropyintroduction} then allows us to give a complete classification of the stabilized automorphism groups of full shifts.

\begin{corollary}
Given natural numbers $m,n \ge 2$, the stabilized groups $\autinf{m}$ and $\autinf{n}$ are isomorphic if and only if $m^{k} = n^{j}$ for some $k,j \in \mathbb{N}$.
\end{corollary}

In particular, we have the following.

\begin{corollary}
The stabilized automorphism groups for the full 2-shift and the full 3-shift are not isomorphic.
\end{corollary}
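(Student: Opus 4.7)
The plan is to deduce this as an immediate specialization of the preceding corollary (or, equivalently, directly from Theorem \ref{thm:isoentropyintroduction}), so there is really only one short arithmetic verification to perform. The strategy via the preceding corollary is to suppose for contradiction that $\autinf{2} \cong \autinf{3}$; then there would exist $k,j \in \mathbb{N}$ with $2^{k} = 3^{j}$. I would rule this out in one line by unique factorization in $\mathbb{Z}$: the left side has no factor of $3$ and the right side has no factor of $2$, so the only solution is $k = j = 0$, which is not allowed.

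Alternatively, I would run the argument straight from Theorem \ref{thm:isoentropyintroduction}, which gives a cleaner conceptual picture. If $\autinf{2} \cong \autinf{3}$, then the theorem forces $h_{\htop}(\sigma_{2})/h_{\htop}(\sigma_{3}) = \log 2/\log 3 \in \mathbb{Q}$. But the irrationality of $\log 2 / \log 3$ is a standard fact (if $\log 2/\log 3 = p/q$ with $p,q$ positive integers, then $2^{q} = 3^{p}$, again contradicting unique factorization), yielding the contradiction.

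The proposal is therefore to present the argument in whichever of the two forms fits the exposition better; both are essentially the observation that $2$ and $3$ are multiplicatively independent. There is no real obstacle here: all the work was done in proving Theorem \ref{thm:isoentropyintroduction} (and the classification corollary it feeds). The only ``hard'' step is invoking unique prime factorization, so the proof amounts to a single sentence once the preceding results are in hand.
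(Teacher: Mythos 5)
Your proposal is correct and matches the paper's intent: the corollary is stated in the paper without a separate proof precisely because it is an immediate specialization of the preceding classification corollary (equivalently of Theorem \ref{thm:isoentropyintroduction}), and your one-line observation that $2$ and $3$ are multiplicatively independent (via unique factorization) is exactly the arithmetic fact needed. Either of your two phrasings is fine; there is no gap.
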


Finally, using results of Kopra, we also show how the proofs of the above results can be extended to the case of sofic shifts.
\begin{theorem}\label{thm:isoentropysoficintro}
Let $(X,\sigma_{X})$ and $(Y,\sigma_{Y})$ be non-trivial mixing sofic shifts, and suppose there is an isomorphism of stabilized groups
$$\Psi \colon \aut^{(\infty)}(\sigma_{X}) \to \aut^{(\infty)}(\sigma_{Y}).$$
Then
$$\frac{h_{top}(\sigma_{X})}{h_{top}(\sigma_{Y})} \in \mathbb{Q}.$$
\end{theorem}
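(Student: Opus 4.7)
The plan is to mirror the proof of Theorem \ref{thm:isoentropyintroduction}: establish a sofic analog of Theorem \ref{thm:entropycalcintro}, use the ghost center machinery to show any isomorphism $\Psi$ must carry some power of $\sigma_{X}$ to some power of $\sigma_{Y}$, and then read off rationality of the entropy ratio from the invariance of local $\mathcal{P}$ entropy under group isomorphisms.

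The first step is to prove that for a non-trivial mixing sofic shift $(X,\sigma_{X})$ there is a class $\mathcal{C}_{\sigma}$ of finite groups such that, for every $k \ge 1$,
$$h_{\mathcal{C}_{\sigma}}\left(\aut^{(\infty)}(\sigma_{X}),\sigma_{X}^{k}\right) = h_{top}(\sigma_{X}^{k}).$$
The upper bound should go through essentially unchanged from the SFT case: any element of $\aut(\sigma_{X}^{n})$ is realized by a block code, and the number of distinct block codes commuting with $\sigma_{X}^{n}$ grows at most like $\lambda_{X}^{n}$ (where $\log \lambda_{X} = h_{top}(\sigma_{X})$), which caps any finite $\mathcal{P}$-approximation of $\textnormal{Cent}(\sigma_{X}^{n})$ at the doubly exponential rate $h_{top}(\sigma_{X}^{k})$. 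For the lower bound the SFT proof constructs large direct products of finite non-abelian simple groups inside $\textnormal{Cent}(\sigma_{X}^{n})$ by planting independent permutation gadgets on high powers of a mixing word. Kopra's constructions provide the sofic substitute: using intrinsically synchronizing words in a minimal right-resolving presentation of $(X,\sigma_{X})$, one produces for each $n$ a subgroup of $\textnormal{Cent}(\sigma_{X}^{n})$ containing a direct product of alternating groups whose combined $\mathcal{P}$-size grows doubly exponentially at rate $h_{top}(\sigma_{X}^{k})$, matching the upper bound.

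The second step is the ghost center argument. The group-theoretic machinery in the proof of Theorem \ref{thm:isoentropyintroduction} identifies the distinguished cyclic subgroup $\langle \sigma_{X}\rangle$, up to commensurability, inside $\aut^{(\infty)}(\sigma_{X})$ without appealing to any SFT-specific facts beyond the existence of sufficiently many automorphisms and the identification of the center of each $\aut(\sigma_{X}^{n})$ as the cyclic subgroup generated by $\sigma_{X}^{n}$. For mixing sofic shifts both properties still hold, so the argument transports verbatim and forces
$$\Psi(\sigma_{X}^{k}) = \sigma_{Y}^{\pm j}$$
for some positive integers $k, j$. By the isomorphism-invariance of local $\mathcal{C}_{\sigma}$ entropy applied to $\Psi$, together with the sofic version of Theorem \ref{thm:entropycalcintro} on both sides, this yields $k\, h_{top}(\sigma_{X}) = j\, h_{top}(\sigma_{Y})$, giving $h_{top}(\sigma_{X})/h_{top}(\sigma_{Y}) \in \mathbb{Q}$.

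The main obstacle I expect is the lower bound in the sofic analog of Theorem \ref{thm:entropycalcintro}. For SFTs one has direct access to Perron-Frobenius combinatorics on the adjacency matrix and can freely use cylinder sets of arbitrarily long paths to install independent permutation gadgets, so matching the count $e^{n \, h_{top}(\sigma_{X})}$ is built in. In the sofic case one must argue inside a right-resolving cover and rely on synchronizing words, and carefully verify that the count of admissible slots available to the Kopra-style constructions indeed grows at the correct asymptotic rate governed by $h_{top}(\sigma_{X})$, rather than at a strictly smaller rate associated to some proper sofic subsystem. Once that matching is confirmed, the remainder of the proof is a formal consequence of the SFT arguments combined with the ghost center setup.
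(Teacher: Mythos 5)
Your outline matches the paper's strategy at the top level: establish a sofic version of the entropy computation, upgrade any isomorphism to a leveled isomorphism via ghost centers, and read off rationality from invariance of local $\mathcal{P}$ entropy. But two of your three steps rely on reasoning that is either unnecessary or slightly off, and the third — the one you correctly flag as the crux — is left unresolved in a way that Kopra's work does not actually fill. On the upper bound: no new counting argument is needed and the one you sketch (counting block codes commuting with $\sigma_{X}^{n}$) would not bound anything as stated, since block codes can have arbitrary anticipation and memory. The paper's Theorem~\ref{thm:upperboundentropy} already applies to any expansive system whose periodic points of high periods are dense, which includes non-trivial mixing sofic shifts, so this step is free. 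On ghost centers: what is needed is not merely that the center of $\aut(\sigma_{X}^{n})$ is $\langle \sigma_{X}^{n}\rangle$, but the \emph{finitary} Ryan's theorem of Kopra (\cite[Theorem 5.9]{Kopra2020}) producing a finitely generated $K$ with $C(K) = \langle\sigma_{X}^{n}\rangle$; the definition of a ghost center is phrased in terms of finitely generated subgroups, and without the finitary version Proposition~\ref{prop:charcgc} does not go through.

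The genuine gap is in the lower bound. You invoke ``Kopra-style constructions'' and intrinsically synchronizing words, but neither provides what is required: a single $\sigma_{X}^{L}$-locally $PS_{C,D,r}$ subgroup $H \subset \aut^{(\infty)}(\sigma_{X})$ whose intersections with $C(\sigma_{X}^{Ln})$ grow at the full rate $h_{top}(\sigma_{X}^{L})$. The obstruction is exactly the one you correctly sense: for a sofic shift presented by a right-resolving labeled graph $(\Gamma,\mathcal{L})$, the simple-graph-symmetry map $\mathfrak{S}_{k}$ into $\aut(\sigma_{X}^{kL})$ need not be injective because $\Gamma^{(k)}$ can carry repeated labels, so one cannot simply import the SFT computation. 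The paper resolves this by (i) in-splitting the right-resolving covers so both presentations have the same number $r$ of vertices, and (ii) an inductive construction of \emph{nested} spanning subgraphs $\mathcal{G}^{(j)} \subset \Gamma^{(j)}$ with $|E_{p,q}(\mathcal{G}^{(j)})| = \lfloor \frac{1}{r}\min_{p,q}A^{j}_{p,q}\rfloor$ on which the length-$j$ labeling is injective and which satisfy the coherence condition $\iota_{d,j}(\mathcal{G}^{(d)}) \subset \mathcal{G}^{(j)}$ for $d\mid j$; the right-resolving property (each label is carried by at most $r$ edges) plus a $\sum_{i}r^{-p_{i}} < 1/(r^{3}-1)$ estimate over primes $p_{i}\mid j$ makes the induction close. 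This preserves a constant fraction of the edges at every scale, so the resulting $\simp_{\ev}^{(\infty)}(\mathcal{G})$ achieves the full Perron rate and no entropy is lost to a proper subsystem. That inductive spanning-subgraph construction is the step your proposal is missing, and it is not supplied by any existing construction of Kopra's; it is a new combinatorial argument in the paper.
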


A brief outline of the paper is as follows. We begin with some background and definitions, and then introduce local $\mathcal{P}$ entropy in Section \ref{sec:localPentropy}. In Section \ref{sec:PSentropycalc} we focus on the case of stabilized automorphism groups of shifts of finite type, and prove Theorem \ref{thm:entropycalcintro}. Section \ref{sec:PSentropycalc} also contains the material on ghost centers, allowing us to prove Theorem \ref{thm:isoentropyintroduction}. In Section \ref{sec:soficstuff} we outline how the proofs may be adapted to sofic shifts, and prove Theorem \ref{thm:isoentropysoficintro}.\\

The author would like to thank Bryna Kra for several helpful conversations related to the contents of this paper, and is very grateful to Mike Boyle for numerous suggestions which improved the paper.

\section{Definitions and background}\label{sec:defsbackground}
By a dynamical system $(X,T)$ we mean a compact metric space $X$ together with a homeomorphism $T \colon X \to X$. An automorphism of a dynamical system $(X,T)$ is a homeomorphism $\phi \colon X \to X$ such that $\phi T = T \phi$, and the collection of all homeomorphisms of $(X,T)$ forms a group (under composition) which we denote by $\aut(T)$. The stabilized automorphism group of $(X,T)$ is defined to be
$$\aut^{(\infty)}(T) = \bigcup_{k=1}^{\infty}\aut(T^{k})$$
where the union is taken in the group $\textnormal{Homeo}(X)$ of all self-homeomorphism of $X$. In other words, $\aut^{(\infty)}(T)$ is precisely the union of the centralizers of all powers of $T$ in $\textnormal{Homeo}(X)$.\\
\indent The following shows how stabilized groups behave with respect to topological conjugacy.
\begin{proposition}\label{prop:ratconjugate}
Let $(X,T), (Y,S)$ be dynamical systems. If there exists $k,j \ge 1$ such that $(X,T^{k})$ and $(Y,S^{j})$ are topologically conjugate, then $\aut^{(\infty)}(T)$ and $\aut^{(\infty)}(S)$ are isomorphic groups.
\end{proposition}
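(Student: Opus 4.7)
The plan is to reduce the statement to the observation that the stabilized automorphism group is unchanged when one passes to a power of the underlying homeomorphism, and then to use the conjugacy directly.

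First I would establish the key identity: for any $(X,T)$ and any $k \ge 1$,
$$\aut^{(\infty)}(T) = \aut^{(\infty)}(T^{k}).$$
The inclusion $\aut^{(\infty)}(T^{k}) \subseteq \aut^{(\infty)}(T)$ is immediate from the definition, since any $\phi$ commuting with $(T^{k})^{n} = T^{kn}$ is in $\aut(T^{kn})$ and hence in $\aut^{(\infty)}(T)$. For the reverse inclusion, if $\phi \in \aut(T^{n})$ for some $n \ge 1$, then $\phi$ also commutes with $T^{kn} = (T^{k})^{n}$, so $\phi \in \aut((T^{k})^{n}) \subseteq \aut^{(\infty)}(T^{k})$; taking the union over $n$ gives $\aut^{(\infty)}(T) \subseteq \aut^{(\infty)}(T^{k})$.

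Next I would handle the conjugacy step. Let $h \colon X \to Y$ be a homeomorphism realizing the conjugacy between $(X,T^{k})$ and $(Y,S^{j})$, so $h \circ T^{k} = S^{j} \circ h$. Then conjugation by $h$, namely $\phi \mapsto h \circ \phi \circ h^{-1}$, is a group isomorphism from $\textnormal{Homeo}(X)$ to $\textnormal{Homeo}(Y)$ which carries $\aut((T^{k})^{n})$ bijectively onto $\aut((S^{j})^{n})$ for each $n \ge 1$, since commutation with $T^{kn}$ on the $X$-side translates to commutation with $S^{jn}$ on the $Y$-side. Taking the union over $n$, this restricts to a group isomorphism
$$\aut^{(\infty)}(T^{k}) \;\xrightarrow{\;\sim\;}\; \aut^{(\infty)}(S^{j}).$$

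Finally, combining the two steps yields
$$\aut^{(\infty)}(T) = \aut^{(\infty)}(T^{k}) \cong \aut^{(\infty)}(S^{j}) = \aut^{(\infty)}(S),$$
which is the claim. There is no substantive obstacle here; the only non-tautological ingredient is the power-invariance identity above, and even that is a short unwinding of the definition. In that sense the proposition is really a structural sanity check that the stabilization $\aut^{(\infty)}$ is insensitive to passing to commensurable time reparametrizations, which is precisely why it is a more flexible invariant than $\aut(T)$ itself.
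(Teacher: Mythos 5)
Your proof is correct and follows essentially the same route as the paper: conjugation by the realizing homeomorphism gives an isomorphism $\aut^{(\infty)}(T^{k}) \cong \aut^{(\infty)}(S^{j})$, and then one observes the power-invariance $\aut^{(\infty)}(T) = \aut^{(\infty)}(T^{k})$ (which you verify in detail while the paper states it as an observation).
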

\begin{proof}
If $F \colon (X,T^{k}) \to (Y,S^{j})$ is a topological conjugacy, then
\begin{equation*}
\begin{gathered}
F_{*} \colon \aut^{(\infty)}(T^{k}) \to \aut^{(\infty)}(S^{j})\\
F_{*}(\alpha) = F \alpha F^{-1}
\end{gathered}
\end{equation*}
is an isomorphism. Now observe that $\aut^{(\infty)}(T^{k}) = \aut^{(\infty)}(T)$ and $\aut^{(\infty)}(S^{j}) = \aut^{(\infty)}(S)$.
\end{proof}
Given a finite set $\mathcal{A}$ (referred to as an alphabet) with the discrete topology, the set $\mathcal{A}^{\mathbb{Z}}$ with the product topology is a compact metrizable space. We may consider a point $x \in \mathcal{A}^{\mathbb{Z}}$ as a bi-infinite sequence $x = (x_{i})_{i \in \mathbb{Z}}$ where $x_{i} \in \mathcal{A}$ for all $i$, and the shift map $\sigma \colon \mathcal{A}^{\mathbb{Z}} \to \mathcal{A}^{\mathbb{Z}}$ defined by $\left(\sigma(x)\right)_{i} = x_{i+1}$ is a self-homeomorphism of $\mathcal{A}^{\mathbb{Z}}$. By a \emph{subshift} we mean a system $(X,\sigma_{X})$ where $X$ is a closed subset of $\mathcal{A}^{\mathbb{Z}}$ for some $\mathcal{A}$ and $\sigma_{X}$ is the restriction of the shift map to $X$.\\
\indent Given $(X,\sigma_{X})$ with $X \subset \mathcal{A}^{\mathbb{Z}}$ we define $\mathcal{L}_{m}(X)$ to be the set of words $w = w_{1}\ldots w_{m}$ of length $m$ over $\mathcal{A}$ such that there exists $x \in X$ with $x_{i} = w_{i}$ for all $1 \le i \le m$. The \emph{language} of $X$ is defined to be $\mathcal{L}(X) = \bigcup_{m=1}^{\infty}\mathcal{L}_{m}(X)$.\\
\indent We say $(X,\sigma_{X})$ is \emph{mixing} if for every $u,v \in \mathcal{L}(X)$ there exists $M$ such that for all $m \ge M$, there exists $w \in \mathcal{L}_{m}(X)$ such that $uwv \in \mathcal{L}(X)$.\\
\indent A subshift $(X,\sigma_{X})$ with $X \subset \mathcal{A}^{\mathbb{Z}}$ is called a \emph{shift of finite type} if there is a finite set of words $\mathcal{F}$ defined over $\mathcal{A}$ such that $x \in X$ if and only if no finite substring of $x$ contains a word in $\mathcal{F}$. We call a mixing shift of finite type trivial if it consists of only one finite orbit. \\
\indent When $|\mathcal{A}| = n$ the system $(\mathcal{A}^{\mathbb{Z}},\sigma)$ is called the full $n$-shift, and we denote it by $(X_{n},\sigma_{n})$.\\
\indent For a set $X$, we let $\sym(X)$ denote the group of bijections from $X$ to itself.
\section{Local $\mathcal{P}$ entropy}\label{sec:localPentropy}
By a \emph{leveled group} $(G,g)$ we mean a group $G$ together with a distinguished element $g \in G$ such that the subgroup generated by $g$ in $G$ is infinite cyclic. A \emph{homomorphism of leveled groups} $\phi \colon (G,g) \to (H,h)$ is a group homomorphism $\phi \colon G \to H$ such that $\phi(g) = h$. An isomorphism (resp. monomorphism, epimorphism) of leveled groups $\phi \colon (G,g) \to (H,h)$ is a homomorphism of leveled groups such that $\phi \colon G \to H$ is a group isomorphism (resp. monomorphism, epimorphism).

Let $\mathcal{P}$ be a class of finite groups which is closed under isomorphism. For example, $\mathcal{P}$ could be the collection of finite abelian groups, or finite simple groups. Given a leveled group $(G,g)$, a subgroup $H \subset G$ is called \emph{$g$-locally $\mathcal{P}$} if all of the following hold:
\begin{enumerate}
\item
For all $j \ge 1$, $H \cap C(g^{j})$ is finite.
\item
There exists $J \ge 1$ such that for all $j \ge J$, $H \cap C(g^{j})$ belongs to $\mathcal{P}$.
\item
$H \cap C(g) \ne \{e\}$.
\end{enumerate}

Given $(G,g)$ and $\mathcal{P}$, for $G$ to contain a $g$-locally $\mathcal{P}$ subgroup it is sufficient for $C(g)$ to contain a non-trivial finite group belonging to $\mathcal{P}$.

\begin{definition}\label{def:localPentropy}
Let $\mathcal{P}$ be a class of finite groups which is closed under isomorphism, and let $(G,g)$ be a leveled group such that $G$ contains a $g$-locally $\mathcal{P}$ subgroup. The local $\mathcal{P}$ entropy of $(G,g)$ is defined to be
\begin{equation}\label{eqn:localpentropydef}
h_{\mathcal{P}}(G,g) = \sup_{H \in \mathcal{F}_{\mathcal{P}}} \limsup_{n \to \infty} \frac{1}{n} \log \log |H \cap C(g^{n})|
\end{equation}
where $\mathcal{F}_{\mathcal{P}}$ denotes the set of $g$-locally $\mathcal{P}$ subgroups of $(G,g)$.
\end{definition}

Depending on the group $G$, the subgroup $\langle g \rangle$ generated by $g$, and the class $\mathcal{P}$, the local $\mathcal{P}$ entropy of $(G,g)$ may or may not be defined. For example, consider the leveled abelian group $(\mathbb{Z},1)$. If $\mathcal{P}$ is the class of all finite groups, then $\mathbb{Z}$ contains no $1$-locally $\mathcal{P}$ subgroups, since $\mathbb{Z}$ contains no non-trivial finite group.
\begin{remark}\label{remark:altentropy}
One may define an alternative version of $h_{\mathcal{P}}$ given by
\begin{equation}\label{eqn:otherentropy}
\sup_{H \in \mathcal{F}_{\mathcal{P}}} \limsup_{n \to \infty} \frac{1}{n} \log |H \cap C(g^{n})|.
\end{equation}
Whether to consider quantity \eqref{eqn:localpentropydef} or \eqref{eqn:otherentropy} is a matter of whether one expects, given $G,g$ and $\mathcal{P}$, the growth of $g$-locally $\mathcal{P}$ subgroups in $\bigcup_{n=1}^{\infty}C(g^{n})$ to be exponential or doubly exponential. We've chosen here to focus on the case of double exponential growth, motivated by our applications in later sections.\\
\indent As an interesting example to consider for the quantity \eqref{eqn:otherentropy}, let $T_{A} \colon \mathbb{T}^{2} \to \mathbb{T}^{2}$ be the hyperbolic toral automorphism given by multiplication by the matrix $A = \begin{pmatrix} 2 & 1 \\ 1 & 1 \end{pmatrix}$, and let $G$ be the semidirect product $\mathbb{T}^{2} \rtimes \mathbb{Z}$ where $\mathbb{Z} \to \aut(\mathbb{T}^{2})$ by $1 \mapsto T_{A}$. Any element of finite order in the centralizer of $(0,n) \in G$ must be of the form $(x,0)$, and hence correspond to a point of period $n$ for $T_{A}$. Then one can check that, using the class $\mathcal{P}$ of all finite groups, the quantity defined in \eqref{eqn:otherentropy} for the leveled group $(G,(0,1))$ is precisely $h_{top}(T_{A})$, the topological entropy of the map $T_{A}$.\qed
\end{remark}

\begin{remark}\label{remark:sftpentropyex}
Suppose $(X,\sigma_{X})$ is a non-trivial mixing shift of finite type. By \cite[Theorem 2.3]{BLR88}, if $K$ is any finite group, $\aut(\sigma_{X})$ contains an isomorphic copy of $K$. It follows that for any $k \ne 0$ and any class $\mathcal{P}$ of finite groups which is closed under isomorphism, the local $\mathcal{P}$ entropy of $\left(\aut^{(\infty)}(\sigma_{X}),\sigma_{X}^{k} \right)$ exists.\qed
\end{remark}

\indent In Section \ref{sec:PSentropycalc}, we will show that if $\aut^{(\infty)}(\sigma_{X})$ is the stabilized automorphism group of a non-trivial mixing shift of finite type and $k \in \mathbb{N}$, then for certain classes $\mathcal{P}$ the local $\mathcal{P}$ entropy of $\left( \aut^{(\infty)}(\sigma_{X}),\sigma_{X}^{k} \right)$ is finite and non-zero. We give now a more elementary example.
\begin{example}
Let $p_{k}$ denote the set of primes, $2 \le a \in \mathbb{N}$, and $P$ be a partition of $\mathbb{N}$ such that each piece of the partition has size $p_{k}$ and there are precisely $a^{p_{k}}$ sets in the partition of size $p_{k}$, $k \ge 1$. Let $\tau$ be an element of $\sym(\mathbb{N})$ which acts by a cycle of length $p_{k}$ on each set in the partition of size $p_{k}$.\\
\indent Denote by $\sym(P)$ the subgroup of $\sym(\mathbb{N})$ of finitely supported permutations of $\mathbb{N}$ which respect the partition $P$. Since $\tau^{-1} \sym(P) \tau = \sym(P)$, the subgroup $G$ in $\sym(\mathbb{N})$ generated by $\sym(P)$ and $\tau$ is isomorphic to a semidirect product $\sym(P) \rtimes \mathbb{Z}$ where the copy of $\mathbb{Z}$ is generated by $\tau$.\\
\indent Let $q$ be any prime, and let $\mathcal{P}_{q}$ denote the class of finite elementary abelian $q$-groups (so if $K \in \mathcal{P}_{q}$ then every element of $K$ has order $q$). Then $h_{\mathcal{P}_{q}}\left( G, \tau \right)= \log a$. We give only an outline of the proof, and leave the details to the reader.\\
\indent If $\rho \in C(\tau^{j})$ for some $j$ and is finite order, then $\rho$ must lie in $\sym(P)$ and commute with $\tau^{j}$. Thus if $H$ is any $\tau$-locally $\mathcal{P}_{q}$ subgroup of $G$, any element $\rho \in H \cap C(\tau^{j})$ must be supported on the union of the elements of the partition of size $p$ where $p=q$ or $p \mid j$. Using this one gets that $h_{\mathcal{P}_{q}}\left( G, \tau \right) \le \log a$. Now for each $k \in \mathbb{N}$, let $f(k)$ be the integer floor of $\frac{p_{k} a^{p_{k}}}{q}$, so $p_{k}a^{p_{k}} = q \cdot f(k) + r(k)$ where $r(k) < q$. Then $f(k)$ $q$-cycles fit in the union of the partition elements of size $p_{k}$, so we can define an embedding
\begin{equation*}
\bigoplus_{k=1}^{\infty}H_{k} \hookrightarrow \sym(P), \qquad H_{k} = \bigoplus_{l=1}^{f(k)} \mathbb{Z}/q \mathbb{Z}.
\end{equation*}
Letting $H$ denote the image of this embedding, the subgroup $H$ is a $\tau$-locally $\mathcal{P}_{q}$ subgroup with $\limsup_{j \to \infty} \frac{1}{j}\log \log |H \cap C(\tau^{j})| \ge \log(a)$.
\qed
\end{example}

When the local $\mathcal{P}$ entropy does exist, it satisfies some useful properties.
\begin{proposition}\label{prop:entropyproperties}
Let $\mathcal{P}$ be a class of finite groups which is closed under isomorphism, and let $(G,g)$ and $(H,h)$ be leveled groups. Suppose the local $\mathcal{P}$ entropy for both $(G,g)$ and $(H,h)$ exists.
\begin{enumerate}
\item
Suppose $i \colon (H,h) \to (G,g)$ is a leveled monomorphism. Then
$$h_{\mathcal{P}}(H,h) \le h_{\mathcal{P}}(G,g).$$
\item
Suppose $\Psi \colon (H,h) \to (G,g)$ is a leveled isomorphism. Then
$$h_{\mathcal{P}}(H,g) = h_{\mathcal{P}}(G,g).$$
\item
The local $\mathcal{P}$ entropy of $(G,g^{-1})$ exists, and $h_{\mathcal{P}}(G,g) = h_{\mathcal{P}}(G,g^{-1})$
\end{enumerate}
\end{proposition}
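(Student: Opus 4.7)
The plan is to handle all three parts by reducing to a single observation: a leveled monomorphism $i \colon (H,h) \to (G,g)$ carries $h$-locally $\mathcal{P}$ subgroups to $g$-locally $\mathcal{P}$ subgroups and preserves the relevant centralizer intersections exactly. Parts (2) and (3) will then follow from part (1) together with the elementary fact that the centralizer of an element equals the centralizer of its inverse.

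For part (1), I first establish the key identity: if $i \colon (H,h) \to (G,g)$ is a leveled monomorphism and $K \le H$ is any subgroup, then for every $j \ge 1$,
$$i(K \cap C_H(h^j)) \;=\; i(K) \cap C_G(g^j).$$
The containment $\subseteq$ is immediate from $i(h^j) = g^j$. For the reverse, given $y = i(x) \in i(K) \cap C_G(g^j)$ with $x \in K$, I would use $i(xh^j) = y g^j = g^j y = i(h^j x)$ together with injectivity of $i$ to conclude $xh^j = h^j x$, i.e.\ $x \in C_H(h^j)$. Because $\mathcal{P}$ is closed under isomorphism and $i$ restricts to a group isomorphism onto its image, the three conditions defining ``$K$ is $h$-locally $\mathcal{P}$'' transfer verbatim to ``$i(K)$ is $g$-locally $\mathcal{P}$'', and the identity above yields $|i(K) \cap C_G(g^j)| = |K \cap C_H(h^j)|$ for every $j$. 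Hence $K \mapsto i(K)$ injects $\mathcal{F}_{\mathcal{P}}(H,h)$ into $\mathcal{F}_{\mathcal{P}}(G,g)$ while preserving the quantity $\limsup_n \frac{1}{n}\log \log |K \cap C(h^n)|$, and passing to suprema gives $h_{\mathcal{P}}(H,h) \le h_{\mathcal{P}}(G,g)$.

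Part (2) is then obtained by applying (1) to the leveled isomorphism $\Psi$ and to its inverse $\Psi^{-1} \colon (G,g) \to (H,h)$, yielding both inequalities; I read the stated equality $h_\mathcal{P}(H,g) = h_\mathcal{P}(G,g)$ as a mild typographical variant of $h_\mathcal{P}(H,h) = h_\mathcal{P}(G,g)$. For part (3), the basic identity $C_G(x) = C_G(x^{-1})$, applied with $x = g^n$, gives $C_G(g^n) = C_G((g^{-1})^n)$ for every $n$. Consequently $\mathcal{F}_{\mathcal{P}}(G,g)$ and $\mathcal{F}_{\mathcal{P}}(G,g^{-1})$ agree as sets, for each such $K$ the sequences $|K \cap C(g^n)|$ and $|K \cap C((g^{-1})^n)|$ coincide term by term, and the two defining suprema are identical.

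There is no genuinely hard step here: the proposition is a formal consequence of the definitions once the centralizer identity above is isolated. The only subtle point, which I would flag as the main thing to get right, is that injectivity of $i$ is used in exactly one place, namely in passing from $i(xh^j) = i(h^j x)$ to $xh^j = h^j x$; without this, one has only the containment $\subseteq$ in the key identity and not the equality of cardinalities that powers the comparison of growth rates.
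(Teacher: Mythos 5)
Your proof is correct and follows essentially the same route as the paper's: both rest on the identity $i(K \cap C_H(h^j)) = i(K) \cap C_G(g^j)$ for a leveled monomorphism, deduce part (2) by applying (1) to $\Psi$ and $\Psi^{-1}$, and settle part (3) via $C(g^n) = C(g^{-n})$. You are also right that the statement of (2) contains a typo ($h_{\mathcal{P}}(H,g)$ should be $h_{\mathcal{P}}(H,h)$), and your version supplies the verification of the reverse containment — where injectivity is used — which the paper leaves implicit.
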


\begin{proof}
For $(1)$, suppose $K \subset H$ is an $h$-local $\mathcal{P}$ subgroup of $H$. Since $i(g) = h$, $i(K \cap C(h^{j})) = i(K) \cap C(g^{j})$, so $i(K)$ is also a $g$-local $\mathcal{P}$ subgroup of $G$. Moreover, for any $j$ we have $|i(K) \cap C(g^{j})| = |K \cap C(h^{j})|$, and it follows that
$$h_{\mathcal{P}}(H,h) \le h_{\mathcal{P}}(G,g).$$
Part $(2)$ follows from $(1)$.
For $(3)$, since $C(g) = C(g^{-1})$, $H$ is a $g$-locally $\mathcal{P}$ subgroup of $G$ if and only if $H$ is a $g^{-1}$-locally $\mathcal{P}$ subgroup of $G$, and $H \cap C(g^{n}) = H \cap C(g^{-n})$. From this it follows that
$$h_{\mathcal{P}}(G,g) = h_{\mathcal{P}}(G,g^{-1}).$$
\end{proof}

\begin{remark}
We do not know whether local $\mathcal{P}$ entropy can increase under leveled factors. \qed
\end{remark}

\subsection{Local product-simple entropy}
In this section we consider local $\mathcal{P}$ entropy where $\mathcal{P}$ is the following class of finite groups.

\begin{definition}\label{def:PSCDRentropy}
Let $C \le 1 \le D$ be positive real numbers and $r \in \mathbb{N}$. Say a finite group $G$ belongs to the class $PS_{C,D,r}$ if there exists finite, simple, non-abelian groups $G_{i}$, $1 \le i \le r$, such that both of the following hold:
\begin{enumerate}
\item
There is an isomorphism $G \cong \prod_{i=1}^{r}G_{i}$.
\item
For all $1 \le i,j \le r$, $|G_{i}|^{C} \le |G_{j}| \le |G_{i}|^{D}$.
\end{enumerate}
\end{definition}
Note the bounds in condition $(2)$ are equivalent to $C \le \frac{\log|G_{j}|}{\log|G_{i}|} \le D$.\\

By definition, for any positive numbers $C \le 1 \le D$ and $r \in \mathbb{N}$, the class $PS_{C,D,r}$ is closed under isomorphism. As a consequence of the Krull-Remak-Schmidt Theorem, if $r_{1} \ne r_{2}$ are natural numbers, then $PS_{C,D,r_{1}}$ and $PS_{C,D,r_{2}}$ are disjoint.\\
\indent When $r=1$, for any positive $C \le 1 \le D$ the class $PS_{C,D,1}$ consists precisely of the finite simple non-abelian groups.\\

We denote the local $PS_{C,D,r}$ entropy of a leveled group $(G,g)$ by $h_{PS_{C,D,r}}(G,g)$.\\

Given $C,D,r$, the $PS_{C,D,r}$ entropy of a leveled group $(G,g)$ may or may not be defined. We will see in Section \ref{sec:PSentropycalc} that for any leveled stabilized automorphism group $\left(\aut^{(\infty)}(\sigma_{X}),\sigma_{X}^{k} \right)$ of a non-trivial mixing shift of finite type $(X,\sigma_{X})$, there are constants $C,D,r$ such that the $PS_{C,D,r}$ entropy of $\left(\aut^{(\infty)}(\sigma_{X}),\sigma_{X}^{k} \right)$ exists, and is finite and positive.\\

The following is the main result of this section, and shows that for leveled stabilized automorphism groups of certain systems, if the $PS_{C,D,r}$ entropy exists, then it is bounded above by the exponential growth rate of the number of periodic points of the system.\\
\indent For a system $(X,T)$ and $m \ge 1$, let $P_{m}(T) = \textnormal{Fix}(T^{m})$ denote the set of $T$-periodic points of period $m$, and let $p_{m}(T) = |P_{m}(T)|$. For a system $(X,T)$ such that $p_{m}(T) < \infty$ for every $m \ge 1$, we define
$$\rho(T) = \limsup_{m \to \infty} \frac{1}{m} \log \left( \max\{p_{m}(T),1\} \right).$$
\begin{theorem}\label{thm:upperboundentgen}
Let $C \le 1 \le D$ be positive real numbers, $r \in \mathbb{N}$. Let $(X,T)$ be a dynamical system such that $p_{m}(T) < \infty$ for every $m \ge 1$, and for any sequence $a_{s} \to \infty$, the set $\bigcup_{s=1}^{\infty}P_{a_{s}}(T)$ is dense in $X$. If the local $PS_{C,D,r}$ entropy of $\left(\aut^{(\infty)}(T),T \right)$ exists, then for any $j \ge 1$
$$h_{PS_{C,D,r}}\left(\aut^{(\infty)}(T),T^{j} \right) \le \rho(T^{j}).$$
\end{theorem}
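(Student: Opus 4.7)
The plan is to fix a $T^{j}$-locally $PS_{C,D,r}$ subgroup $H \subseteq \aut^{(\infty)}(T)$, set $L_{n} = H \cap C(T^{jn})$, and bound $\limsup_{n} \frac{1}{n}\log\log|L_{n}|$ by $\rho(T^{j})$. Every $\phi \in L_{n}$ commutes with $T^{jn}$ and so permutes the finite set $P_{jn}(T)$, yielding a restriction homomorphism $\rho_{n} \colon L_{n} \to \sym(P_{jn}(T))$ with kernel $K_{n}$. The image has size at most $p_{jn}(T)!$, so Stirling, together with the fact that $p_{m}(T^{j}) = p_{jm}(T)$ and $\log p_{jn}(T) \le (\rho(T^{j}) + \epsilon)n$ for large $n$, gives $\log\log|L_{n}/K_{n}| \le (\rho(T^{j})+\epsilon)n + O(\log n)$. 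The main task is to control the kernel $K_{n}$.

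For $n$ large enough that $L_{n}$ lies in $PS_{C,D,r}$, write $L_{n} \cong \prod_{i=1}^{r} G_{i}^{(n)}$ with each $G_{i}^{(n)}$ finite simple non-abelian. Since any normal subgroup of a product of non-abelian simple groups is a sub-product, $K_{n} = \prod_{i \in S_{n}} G_{i}^{(n)}$ for some $S_{n} \subseteq \{1,\ldots,r\}$. The key step is to show that $S_{n} \ne \{1,\ldots,r\}$ for all sufficiently large $n$. Suppose not; then $L_{n_{k}} = K_{n_{k}}$ along a sequence $n_{k} \to \infty$. By the non-triviality clause in the definition of a $g$-locally $\mathcal{P}$ subgroup, we may pick $\phi \ne e$ in $L_{1} = H \cap C(T^{j})$. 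Because $1 \mid n_{k}$, $\phi \in L_{n_{k}} = K_{n_{k}}$, so $\phi$ fixes $P_{jn_{k}}(T)$ pointwise for every $k$. Applying the density hypothesis to the sequence $a_{s} = jn_{s} \to \infty$, the set $\bigcup_{k} P_{jn_{k}}(T)$ is dense in $X$, so continuity of $\phi$ forces $\phi = e$, a contradiction.

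Once $S_{n} \ne \{1,\ldots,r\}$ for $n \ge N$, pick any $i_{0} \notin S_{n}$; then $|L_{n}/K_{n}| \ge |G_{i_{0}}^{(n)}|$, and the comparability condition $|G_{i}^{(n)}| \le |G_{i_{0}}^{(n)}|^{D}$ built into the class $PS_{C,D,r}$ yields
$$|L_{n}| = \prod_{i=1}^{r} |G_{i}^{(n)}| \le |G_{i_{0}}^{(n)}|^{rD} \le |L_{n}/K_{n}|^{rD}.$$
Combined with $|L_{n}/K_{n}| \le p_{jn}(T)!$ and the Stirling bound above, this gives
$$\log\log|L_{n}| \le \log(rD) + \log\log|L_{n}/K_{n}| \le (\rho(T^{j})+\epsilon)n + O(\log n)$$
for all sufficiently large $n$. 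Dividing by $n$, taking $\limsup$, letting $\epsilon \to 0$, and taking the supremum over $H \in \mathcal{F}_{PS_{C,D,r}}$ finishes the proof. The one delicate point is the density step ruling out $L_{n} = K_{n}$ for large $n$; this is precisely where both the non-triviality condition $H \cap C(g) \ne \{e\}$ from the definition of a $g$-locally $\mathcal{P}$ subgroup and the density assumption on periodic points are indispensable.
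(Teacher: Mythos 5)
Your proposal is correct and takes essentially the same approach as the paper: restriction to permutations of periodic points, the sub-product structure of normal subgroups of a product of finite simple non-abelian groups, the density hypothesis together with the non-triviality condition $H \cap C(T^{j}) \neq \{e\}$ to rule out the kernel being all of $L_{n}$ along a subsequence, the $PS_{C,D,r}$ comparability condition to bound $|L_{n}| \le (p_{jn}(T)!)^{rD}$, and Stirling. The only organizational differences are that the paper first reduces to $j=1$ and packages the key step as a contrapositive lemma (if $|H^{(m)}| > (p_{m}(T)!)^{rD}$ then $H^{(m)} \subset \ker\rho_{m}$), whereas you argue directly that the kernel is a proper sub-product for large $n$ and then read off the order bound; these are logically equivalent.
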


\begin{remark}
There are systems $(X,T)$ satisfying the hypotheses of Theorem \ref{thm:upperboundentgen} for which the $PS_{C,D,r}$ entropy of $\left(\aut^{(\infty)}(T),T \right)$ does not exist for any choice of $C,D,r$. For example, consider the hyperbolic toral automorphism $T_{A} \colon \mathbb{T}^{2} \to \mathbb{T}^{2}$ induced by multiplication by $A = \begin{pmatrix} 2 & 1 \\ 1 & 1 \end{pmatrix}$. For any $k \ge 1$, the group $\aut(T_{A}^{k})$ is isomorphic to $\mathbb{Z} \times \mathbb{Z}/2\mathbb{Z}$ (see e.g. \cite[Theorem 1]{BaakeRoberts1997}), so the only finite group contained in $\aut(T_{A}^{k})$ is isomorphic to $\mathbb{Z}/2\mathbb{Z}$. Note however that if one lets $\mathcal{P}_{ab}$ denote the class of finite abelian groups, then $h_{\mathcal{P}_{ab}}\left( \aut^{(\infty)}(T_{A}),T_{A} \right)$ does exist, and is equal to zero (see also Remark \ref{remark:altentropy}).\qed
\end{remark}

Before beginning the proof of Theorem \ref{thm:upperboundentgen}, we record two elementary results from group theory which will be useful for us.

\begin{lemma}\label{lemma:pondlight}
Suppose $G$ is a group, $N$ is a normal subgroup of $G$ and $H$ is a subgroup of $G$ which is simple. If $H \cap N \ne \{e\}$, then $H \subset N$.
\end{lemma}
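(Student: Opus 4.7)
The plan is to show that $H \cap N$ is a normal subgroup of $H$ and then invoke simplicity of $H$ to conclude.

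First I would verify that $H \cap N$ is normal in $H$. For any $h \in H$, since $N$ is normal in $G$ we have $hNh^{-1} = N$, and trivially $hHh^{-1} = H$, so $h(H \cap N)h^{-1} \subseteq H \cap N$. Thus $H \cap N \trianglelefteq H$.

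Next, since $H$ is simple, its only normal subgroups are $\{e\}$ and $H$ itself. By hypothesis $H \cap N \ne \{e\}$, so we must have $H \cap N = H$, which is equivalent to $H \subseteq N$.

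There is no serious obstacle here; the statement is an immediate consequence of the definitions of normality and simplicity. The only subtle point worth stating carefully in the write-up is the initial normality check, which uses the normality of $N$ in the ambient group $G$ (not merely in some subgroup containing $H$), applied to the particular conjugation by elements of $H \subseteq G$.
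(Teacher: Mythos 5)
Your proof is correct and follows the same route as the paper: show $H \cap N$ is normal in $H$ (using normality of $N$ in $G$), then apply simplicity of $H$ to conclude $H \cap N = H$. You simply spell out the conjugation check that the paper states in one line.
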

\begin{proof}
Since $N$ is normal in $G$, $N \cap H$ is normal in $H$. Then $H \cap N \ne \{e\}$ implies $H \cap N$ is a non-trivial normal subgroup of $H$; since $H$ is simple, this means $H \cap N = H$, and $H \subset N$.
\end{proof}

\begin{lemma}\label{lemma:smallisland}
Let $G = \prod_{i=1}^{r}G_{i}$ be a group where each $G_{i}$ is finite, simple, and non-abelian. Let $\pi_{i} \colon G \to G_{i}$ denote the homomorphism given by projection onto the $i$th coordinate of $G$. Suppose $N$ is a normal subgroup of $G$, and let $I_{n} = \{i \mid \pi_{i}(N) \ne \{e\}\} \subset \{1,\ldots,r\}$. Then
$$\prod_{i \in I_{N}}G_{i} \subset N.$$
\end{lemma}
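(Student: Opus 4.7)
The plan is to identify each factor $G_i$ with its natural copy $K_i = \{e\} \times \cdots \times \{e\} \times G_i \times \{e\} \times \cdots \times \{e\}$ inside $G$, and show that $K_i \subset N$ for every $i \in I_N$; since $N$ is a subgroup containing each such $K_i$, it will then contain the subgroup they generate, which is exactly $\prod_{i \in I_N} G_i$. Each $K_i$ is a simple subgroup of $G$ (isomorphic to $G_i$), so by the immediately preceding Lemma \ref{lemma:pondlight} applied to $H = K_i$, it suffices to verify that $K_i \cap N \neq \{e\}$ for each $i \in I_N$.

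To produce a nontrivial element of $K_i \cap N$, I would exploit normality of $N$ together with the fact that $G_i$ is nonabelian. Fix $i \in I_N$ and pick $g = (g_1, \ldots, g_r) \in N$ with $g_i \neq e$. Since $G_i$ is simple and nonabelian, its center is trivial, so $g_i$ is not central in $G_i$; hence there exists $x_i \in G_i$ with $[x_i, g_i] \neq e$. Let $x \in K_i$ be the element whose $i$-th coordinate is $x_i$ and whose other coordinates are $e$. Then the commutator $[x, g]$ lies in $N$ (since $N$ is normal), and a coordinatewise computation shows $[x,g] = (e, \ldots, e, [x_i, g_i], e, \ldots, e) \in K_i$ with nontrivial $i$-th coordinate. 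This gives the required nontrivial element of $K_i \cap N$.

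Applying Lemma \ref{lemma:pondlight} to conclude $K_i \subset N$ for every $i \in I_N$, and then taking the subgroup generated by these $K_i$'s (which pairwise commute and intersect trivially), completes the proof. There is no real obstacle here: the only subtle point is to recognize that simplicity alone is not enough to produce a nontrivial element of $K_i \cap N$ from the hypothesis $\pi_i(N) \neq \{e\}$, and that one needs the nonabelian hypothesis to run the commutator trick and manufacture an element of $N$ whose support is concentrated in the single coordinate $i$.
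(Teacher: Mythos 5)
Your proof is correct and takes essentially the same route as the paper: both use the commutator trick (conjugate an element of $N$ with nontrivial $i$-th coordinate by an element of the coordinate subgroup $K_i$, then use that $G_i$ is nonabelian simple, hence centerless, to get a nontrivial commutator) to land a nontrivial element of $N$ inside $K_i$, and then use simplicity to conclude $K_i \subset N$. The only cosmetic difference is that you explicitly invoke Lemma \ref{lemma:pondlight} for the last step, whereas the paper's proof re-derives it inline by observing $[\pi_j(N),G_j]$ is a nontrivial normal subgroup of the simple group $G_j$.
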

\begin{proof}
Let $j \in I_{N}$. Then there exists $\tilde{a} = (a_{1},\ldots,a_{r}) \in N$ such that $a_{j} \ne \{e\}$. Given $g \in G_{j}$, let $\tilde{g} = (e,\ldots,g,\ldots,e)$ where the $g$ appears in the $j$th spot. Then
$$\tilde{g}^{-1} \tilde{a}  \tilde{g}  = (a_{1},\ldots,g^{-1}a_{j}g,\ldots,a_{r})$$
and hence
$$(e,\ldots,a_{j}^{-1}g^{-1}a_{j}g,\ldots,e) = \tilde{a}^{-1} \tilde{g}^{-1} \tilde{a}  \tilde{g} \in N.$$
It follows that
$$\{e\} \times \cdots \times [\pi_{j}(N),G_{j}] \times \cdots \times \{e\} \subset N.$$
The subgroup $[\pi_{j}(N),G_{j}]$ is normal in $G_{j}$. Since $G_{j}$ is simple and non-abelian, it has trivial center, so $[\pi_{j}(N),G_{j}]$ is non-trivial. Simplicity of $G_{j}$ then implies
$$\{e\} \times \cdots \times G_{j} \times \cdots \times \{e\} \subset N$$
from which the result follows.

\end{proof}

\begin{proof}[Proof of Theorem \ref{thm:upperboundentropy}]
First we show how the result for arbitrary $j \ge 1$ follows from $j=1$ case. For any $j \ge 1$, note that the system $(X,T^{j})$ satisfies the conditions of the theorem. Then assuming the result holds for $j=1$, we have
\begin{equation*}
h_{PS_{C,D,r}}\left(\aut^{(\infty)}(T),T^{j} \right) = h_{PS_{C,D,r}}\left(\aut^{(\infty)}(T^{j}),T^{j} \right) \le \rho(T^{j}).
\end{equation*}
From here on then we suppose $j=1$. Suppose $H$ is a $T$-locally $PS_{C,D,r}$ subgroup of $\aut^{(\infty)}(T)$. We will show that
\begin{equation}
\limsup_{n \to \infty} \frac{1}{n} \log \log |H \cap C(T^{n})| \le \limsup_{n \to \infty} \frac{1}{n} \log p_{n}(T).
\end{equation}
Consider for each $n$ the homomorphism
\begin{equation*}
\begin{gathered}
\rho_{n} \colon \aut(T^{n}) \to \sym(P_{1}(T^{n}))\\
\rho_{n} \colon \alpha \mapsto \alpha|_{P_{1}(T^{n})}
\end{gathered}
\end{equation*}
and note that the hypothesis on the periodic points implies that for any sequence $a_{s} \to \infty$
\begin{equation}\label{eqn:ppintersection}
\bigcap_{r=1}^{\infty} \ker \rho_{a_{s}} = \{e\}.
\end{equation}
We quickly set some notation to be used for the remainder of the proof. For each $n$, we let $H \cap C(T^{n}) = H^{(n)}$. Since $H$ is a $T$-locally $PS_{C,D,r}$ subgroup, there exists $M \in \mathbb{N}$ such that for all $n \ge M$, we have a decomposition
$$H^{(n)} = \prod_{l=1}^{r}H_{l}^{(n)}$$
where each $H_{l}^{(n)}$ is finite simple non-abelian, and the collection $\{H_{l}^{(n)}\}_{l=1}^{r}$ satisfies condition $(2)$ of Definition \ref{def:PSCDRentropy}.

We will use the following lemma.
\begin{lemma}\label{lemma:lemma1}
Suppose for some $m \ge M$ that
$$|H^{(m)}| > \left(p_{m}(T)!\right)^{rD}.$$
Then
$$H^{(m)} \subset \ker \rho_{m}.$$
\end{lemma}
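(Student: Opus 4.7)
The plan is to argue by contradiction. Set $N = H^{(m)} \cap \ker \rho_m$, a normal subgroup of $H^{(m)}$. Since $\rho_m$ maps $H^{(m)}$ into the finite symmetric group $\sym(P_1(T^m))$ of order $p_m(T)!$, the first isomorphism theorem gives $|H^{(m)}/N| \le p_m(T)!$. The strategy is to use the size hypothesis to force $N = H^{(m)}$.

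The first step is to identify $N$ as a subproduct inside $H^{(m)} = \prod_{l=1}^{r} H_l^{(m)}$. Let $I_N = \{l : \pi_l(N) \ne \{e\}\}$. By Lemma \ref{lemma:smallisland} we have $\prod_{l \in I_N} H_l^{(m)} \subset N$. Conversely, any $(h_1,\ldots,h_r) \in N$ satisfies $h_l = e$ for every $l \notin I_N$ by the very definition of $I_N$. Hence
$$N = \prod_{l \in I_N} H_l^{(m)}, \qquad H^{(m)}/N \cong \prod_{l \notin I_N} H_l^{(m)}.$$

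Next I would exploit the growth condition from Definition \ref{def:PSCDRentropy}. Let $i^*$ be an index minimizing $|H_l^{(m)}|$. Then every $|H_l^{(m)}| \le |H_{i^*}^{(m)}|^{D}$, so
$$|H^{(m)}| \;=\; \prod_{l=1}^{r} |H_l^{(m)}| \;\le\; |H_{i^*}^{(m)}|^{rD}.$$
Combining with the hypothesis $|H^{(m)}| > (p_m(T)!)^{rD}$, we get $|H_{i^*}^{(m)}| > p_m(T)!$, and therefore $|H_l^{(m)}| > p_m(T)!$ for every $l$.

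Finally, suppose for contradiction that $N \ne H^{(m)}$. Then $I_N \ne \{1,\ldots,r\}$, so we can pick some $i_0 \notin I_N$. The quotient $H^{(m)}/N \cong \prod_{l \notin I_N} H_l^{(m)}$ has order at least $|H_{i_0}^{(m)}| > p_m(T)!$, contradicting the bound $|H^{(m)}/N| \le p_m(T)!$ coming from the symmetric group embedding. The one step where care is needed is the identification of $N$ as a subproduct, which is really the content of Lemma \ref{lemma:smallisland} together with the definition of $I_N$; once that structural fact is in hand, the rest is just counting.
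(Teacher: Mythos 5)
Your proof is correct and is essentially the same argument as the paper's, just with the pigeonhole step packaged slightly differently. Both derive $|H_l^{(m)}| > p_m(T)!$ for every $l$ from the size hypothesis together with condition $(2)$ of Definition \ref{def:PSCDRentropy} (you minimize over $l$, the paper maximizes -- same estimate), and both invoke Lemma \ref{lemma:smallisland} to exploit normality of $N = H^{(m)} \cap \ker\rho_m$. The paper then argues factor by factor: for each $l$, the copy $K_l$ of $H_l^{(m)}$ has order exceeding $|\sym(P_1(T^m))|$, so $K_l \cap \ker\rho_m$ is non-trivial, hence $\pi_l(N) \ne \{e\}$ for all $l$, and smallisland forces $N = H^{(m)}$. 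You instead apply smallisland once up front to identify $N$ as the subproduct $\prod_{l\in I_N} H_l^{(m)}$, so that $H^{(m)}/N \cong \prod_{l\notin I_N} H_l^{(m)}$ embeds into $\sym(P_1(T^m))$, and a single comparison of orders shows $I_N$ must be all of $\{1,\dots,r\}$. The two routes are logically interchangeable; yours is a touch more structural in that it characterizes $N$ exactly before counting, whereas the paper's is more pointwise.
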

\begin{proof}[Proof of Lemma \ref{lemma:lemma1}]
First we show the hypothesis implies, for all $1 \le l \le r$,
\begin{equation}\label{eqn:lemma1bound}
|H_{l}^{(m)}| > p_{m}(T)!.
\end{equation}
Given
$$|H^{(m)}| > \left(p_{m}(T)!\right)^{rD}$$
we have
$$\log |H^{(m)}| > \log \left(p_{m}(T)!\right)^{rD}.$$
Since $H^{(m)} = \prod_{l=1}^{r}H_{l}^{(m)}$ this gives
$$\log \prod_{l=1}^{r}|H_{l}^{(m)}| > rD \cdot \log \left(p_{m}(T)!\right)$$
so
$$\sum_{l=1}^{r}\log|H_{l}^{(m)}| > rD \cdot \log \left(p_{m}(T)!\right).$$
Then
$$r \cdot \max_{1 \le l \le r} \{\log |H_{l}^{(m)}|\} > rD \cdot \log \left(p_{m}(T)!\right).$$
By (2) of Definition \ref{def:PSCDRentropy}, for any $1 \le l \le r$ we have
$$D \log |H_{l}^{(m)}| \ge \max_{1 \le l \le r}\{\log|H_{l}^{(m)}|\}$$
so
$$rD \log |H_{l}^{(m)}| \ge r \cdot \max_{1 \le l \le r} \{\log |H_{l}^{(m)}|\} > rD \cdot \log \left(p_{m}(T)!\right).$$
Thus
$$\log |H_{l}^{(m)}| > \log \left(p_{m}(T)!\right)$$
so we get
\begin{equation}\label{eqn:lemma1bound2}
|H_{l}^{(m)}| > p_{m}(T)!.
\end{equation}
Consider for each $l$ the subgroup $K_{l}$ of $H^{(m)}$ given by
$$\{e\} \times \cdots \times H_{l}^{(m)} \times \cdots \times \{e\}.$$
Then for any $1 \le l \le r$, $|K_{l}| = |H_{l}^{(m)}|$, so by \eqref{eqn:lemma1bound2}, $|K_{l}| > \left(p_{m}(T)!\right)$. It follows that the intersection $K_{l} \cap \ker \rho_{m}$ must be non-trivial. Consider the subgroup of $H^{(m)}$ given by $Q = \ker \rho_{m} \cap H^{(m)}$. Since $\ker \rho_{m}$ is normal in $\aut(T^{m})$, $Q$ is normal in $H^{(m)}$. From the above, we know that for any $1 \le l \le r$,
$$\{e\} \ne \pi_{l}(Q) \subset H_{l}^{(m)}.$$
Lemma \ref{lemma:smallisland} then implies $Q$ must contain all of $H^{(m)}$, so $H^{(m)} \subset \ker \rho_{m}$ as desired.
\end{proof}

Continuing the proof of Theorem \ref{thm:upperboundentropy}, we claim there exists $N_{1}$ such that for all $n \ge N_{1}$, we have
\begin{equation}\label{eqn:bound1}
|H^{(n)}| = |H \cap C(T^{n})| \le \left(p_{n}(T)! \right)^{rD}.
\end{equation}
To see this, suppose instead there exists an increasing sequence of natural numbers $b_{i} \to \infty$ such that for all $i$
$$|H^{(b_{i})}| > \left(p_{b_{i}}(T)! \right)^{rD}.$$
By Lemma \ref{lemma:lemma1}, for each $i$ sufficiently large
$$H^{(b_{i})} \subset \ker \rho_{b_{i}}.$$
Since $H$ is a $T$-locally $PS_{C,D,r}$ subgroup, there exists some $\{e\} \ne \alpha \in H \cap C(T)$, so $\alpha \in H \cap C(T^{b_{i}}) = H^{(b_{i})}$ for all $i$ sufficiently large, and hence $\alpha \in \ker \rho_{b_{i}}$ for all $i$ sufficiently large. By \eqref{eqn:ppintersection}, this is a contradiction, proving the claim.\\
\indent Note that, given \eqref{eqn:ppintersection} and the fact that $|H^{(n)}| \ge 2$ for $n \ge M$, we may also assume that $p_{n}(T) \ge 2$ for all $n \ge N_{1}$.\\

Now using \eqref{eqn:bound1} and the fact that for any $L \in \mathbb{N}$, $L! \le L^L$, for any $n \ge N_{1}$ we have
\begin{equation*}
\begin{gathered}
\log \log |H \cap C(T^{n})| \le \log \log \left( (p_{n}(T)!)^{rD} \right) = \log \big( rD \cdot \log \left( p_{n}(T)! \right) \big)\\
\le \log \big( rD \cdot \log \left( p_{n}(T)^{p_{n}(T)} \right) \big) = \log \big( rD \cdot p_{n}(T) \cdot \log \left( p_{n}(T) \right) \big)\\
= \log(rD) + \log(p_{n}(T)) + \log \log (p_{n}(T)).
\end{gathered}
\end{equation*}
It follows that
$$\limsup_{n \to \infty}\frac{1}{n} \log \log |H \cap C(T^{n})| \le \limsup_{n \to \infty} \frac{1}{n} \log p_{n}(T) = \rho(T)$$
completing the proof.
\end{proof}

Recall a system $(X,T)$ is \emph{expansive} if there exists $\epsilon > 0$ such that for $x \ne y$ in $X$, there exists $n \in \mathbb{Z}$ such that $d(T^{n}(x),T^{n}(y)) > \epsilon$ (where $d$ denotes the metric on $X$). For expansive systems satisfying the periodic point density hypothesis of Theorem \ref{thm:upperboundentgen}, we can bound the $PS_{C,D,r}$ entropy of the leveled stabilized automorphism group by the topological entropy.

\begin{theorem}\label{thm:upperboundentropy}
Let $C \le 1 \le D$ be positive real numbers, $r \in \mathbb{N}$. Let $(X,T)$ be an expansive dynamical system such that for any sequence $a_{s} \to \infty$, the set $\bigcup_{s=1}^{\infty}P_{a_{s}}(T)$ is dense in $X$. If the local $PS_{C,D,r}$ entropy of $\left(\aut^{(\infty)}(T),T \right)$ exists, then for any $j \ge 1$
$$h_{PS_{C,D,r}}\left(\aut^{(\infty)}(T),T^{j} \right) \le h_{top}(T^{j}).$$
\end{theorem}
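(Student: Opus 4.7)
The plan is to deduce Theorem \ref{thm:upperboundentropy} from the preceding Theorem \ref{thm:upperboundentgen} by establishing, under the expansivity hypothesis, the classical inequality between periodic point growth and topological entropy: $\rho(T^{j}) \leq h_{top}(T^{j})$.

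First I would verify that $(X,T)$ satisfies the hypotheses of Theorem \ref{thm:upperboundentgen}. The density of $\bigcup_{s=1}^{\infty} P_{a_s}(T)$ for any $a_s \to \infty$ is assumed. For the finiteness of $p_m(T)$, recall that if $\delta > 0$ is an expansivity constant for $T$, then any two distinct points $x,y \in \textnormal{Fix}(T^m)$ satisfy $d(T^k x, T^k y) > \delta$ for some $k \in \mathbb{Z}$; since both orbits have period dividing $m$, we may reduce $k$ modulo $m$ and assume $0 \le k < m$, whence $\textnormal{Fix}(T^m)$ is uniformly discrete in the compact space $X$ and hence finite. Applying Theorem \ref{thm:upperboundentgen} then yields, for every $j \ge 1$,
$$h_{PS_{C,D,r}}\bigl(\aut^{(\infty)}(T),T^{j}\bigr) \le \rho(T^{j}).$$

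Second, I would show $\rho(T^{j}) \le h_{top}(T^{j})$. Since $T$ is expansive with constant $\delta$, so is $T^{j}$ with the same constant. The same reduction modulo $n$ as above shows that the entire set $P_n(T^{j}) = \textnormal{Fix}(T^{nj})$ forms an $(n,\delta)$-separated set with respect to $T^{j}$: for any two distinct such points, a displacement of more than $\delta$ can be realized at some iterate between $0$ and $n-1$. Consequently $p_n(T^{j}) \le s_n(T^{j},\delta)$, where $s_n(T^{j},\delta)$ denotes the maximal cardinality of an $(n,\delta)$-separated set for $T^{j}$. Taking $\limsup \frac{1}{n}\log$ of both sides and invoking the standard formula $h_{top}(T^{j}) = \lim_{\epsilon \to 0}\limsup_n \frac{1}{n}\log s_n(T^{j},\epsilon)$ gives $\rho(T^{j}) \le h_{top}(T^{j})$.

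Combining the two bounds finishes the proof. I do not anticipate any substantive obstacle: the result is essentially a packaging of Theorem \ref{thm:upperboundentgen} with the well-known bound (going back to Bowen) that for expansive homeomorphisms on a compact metric space, the exponential growth rate of the number of periodic points is dominated by the topological entropy.
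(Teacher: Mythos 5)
Your argument follows essentially the same route as the paper: invoke Theorem~\ref{thm:upperboundentgen} after checking its hypotheses, then dominate $\rho(T^{j})$ by $h_{top}(T^{j})$ via the standard Bowen-type bound for expansive systems. The paper packages the second step slightly differently, citing $\rho(T)\le h_{top}(T)$ from \cite[Prop.~3.2.14]{HasselblattKatokbook} and chaining $\rho(T^{j})\le j\,\rho(T)\le j\,h_{top}(T)=h_{top}(T^{j})$, whereas you prove the bound directly for $T^{j}$; this is a cosmetic difference. One small inaccuracy to repair: the assertion that $T^{j}$ is expansive \emph{with the same constant} $\delta$ is not true in general. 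The correct statement is that $T^{j}$ is expansive with \emph{some} constant $\delta_{j}>0$ (obtained from $\delta$ via uniform continuity of $T^{\pm 1},\dots,T^{\pm(j-1)}$), and then exactly your separated-set argument, run with $\delta_{j}$ in place of $\delta$, gives $p_{n}(T^{j})\le s_{n}(T^{j},\delta_{j})$ and hence $\rho(T^{j})\le h_{top}(T^{j})$. Likewise, the set $\textnormal{Fix}(T^{m})$ is uniformly separated in the Bowen metric $d_{m}(x,y)=\max_{0\le k<m}d(T^{k}x,T^{k}y)$ rather than in $d$ itself, but since $d_{m}$ induces the same topology, finiteness follows by compactness as you intend. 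With those two phrasings tightened, the proof is correct.
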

\begin{proof}
Since $(X,T)$ is expansive, $p_{n}(T) < \infty$ for every $n$, and $\rho(T) \le h_{top}(T)$ (see e.g. \cite[Prop. 3.2.14]{HasselblattKatokbook}). Then by Theorem \ref{thm:upperboundentgen} we have
$$h_{PS_{C,D,r}}\left(\aut^{(\infty)}(T),T^{j} \right) \le \rho(T^{j}) \le j \cdot \rho(T) \le j \cdot h_{top}(T) = h_{top}(T^{j}).$$

\end{proof}

\section{Local PS entropy for stabilized automorphism groups of shifts of finite type}\label{sec:PSentropycalc}
We turn now to the study of local PS entropy for stabilized automorphism groups of shifts of finite type. Our first goal is to show that, in the case of shifts of finite type, the local PS entropy of the stabilized automorphism group is given by the topological entropy of an appropriate power of the shift. Since non-trivial mixing shifts of finite type satisfy the hypotheses of Theorem \ref{thm:upperboundentropy}, we already have an upper bound in terms of the entropy, and it remains to prove that this is also a lower bound. We start with some notation.

\subsection{Notation and preliminaries}
Given a square matrix $A$ over $\mathbb{Z}_{+}$, let $\Gamma_{A}$ denote a directed graph with adjacency matrix $A$. We denote the set of vertices by $V(\Gamma_{A})$, and for $i,j \in V(\Gamma_{A})$ the set of edges in $\Gamma_{A}$ from $i$ to $j$ by $E_{i,j}(\Gamma_{A})$. The graph $\Gamma_{A}$ defines an edge shift of finite type $(X_{A},\sigma_{A})$ given by all bi-infinite walks on $\Gamma_{A}$, and any shift of finite type is topologically conjugate to such an edge shift of finite type. Without loss of generality, we may always assume $A$ is nondegenerate, i.e. has no zero row or zero column.\\
\indent Recall a $\mathbb{Z}_{+}$-matrix $A$ is primitive if there exists $K$ such that $A^{K}$ has only positive entries. For a nondegenerate $\mathbb{Z}_{+}$-matrix $A$, the shift of finite type $(X_{A},\sigma_{A})$ is mixing if and only if $A$ is primitive.\\
\indent When $A$ is primitive, $h_{top}(\sigma_{A}) = \log \lambda_{A}$ where $\lambda_{A}$ is the Perron-Frobenius eigenvalue of $A$.

By a \emph{simple graph symmetry} of $\Gamma_{A}$ we mean a graph automorphism of $\Gamma_{A}$ which leaves every vertex fixed. The set of simple graph symmetries of $\Gamma_{A}$ forms a group that we identify with $\prod_{i,j \in V(\Gamma_{A})}\sym(E_{i,j}(\Gamma_{A}))$, where $\sym(X)$ denotes the group of permutations of a set $X$.

Any simple graph symmetry $\tau \in \simp(\Gamma_{A})$ induces an automorphism of $(X_{A},\sigma_{A})$ which we denote by $\tilde{\tau}$. The automorphism $\tilde{\tau}$ is defined by a 0-block code
\begin{equation*}
\tilde{\tau} \colon \ldots x_{-2}x_{-1}x_{0}x_{1}x_{2}\ldots \mapsto \ldots \tau(x_{2})\tau(x_{-1})\tau(x_{0})\tau(x_{1})\tau(x_{2})\ldots.
\end{equation*}
We let $\simp(\Gamma_{A})$ denote the subgroup of simple automorphisms in $\aut(\sigma_{A})$ induced by simple graph symmetries of $\Gamma_{A}$. In other words, there is an isomorphism
\begin{equation*}
\begin{gathered}
\mathfrak{S} \colon \prod_{i,j \in V(\Gamma_{A})}\sym(E_{i,j}(\Gamma_{A})) \stackrel{\cong}\longrightarrow \simp(\Gamma_{A})\\
\mathfrak{S} \colon \tau \mapsto \tilde{\tau}.
\end{gathered}
\end{equation*}

We define the subgroup of even simple graph automorphisms in $\simp(\Gamma_{A})$ by

$$\simp_{\ev}(\Gamma_{A}) = \mathfrak{S}\left(\prod_{i,j \in V(\Gamma_{A})}\alt(E_{i,j}(\Gamma_{A})) \right)$$

where $\alt(X)$ always denotes the alternating subgroup of $\sym(X)$.

Let $\Gamma^{(m)}_{A}$ denote a graph which presents the shift $(X_{A},\sigma_{A}^{m})$; thus $\simp(\Gamma^{(m)}_{A}) \subset \aut(\sigma_{A}^{m})$. Note that we may identify $V(\Gamma_{A}^{(m)})$ with $V(\Gamma_{A})$, and we may identify each edge set $E_{i,j}(\Gamma_{A}^{(m)})$ with paths of length $m$ through $\Gamma_{A}$ going from $i$ to $j$. For any $k, m \ge 1$ we have an inclusion map
\begin{equation}
\label{eq:def-im}
    i_{m,k} \colon \simp(\Gamma^{(m)}_{A}) \hookrightarrow \simp(\Gamma^{(km)}_{A}),
    \end{equation}
and this homomorphism agrees with the restriction of the map
$$\aut(\sigma^{m}_{A}) \hookrightarrow \aut(\sigma_{A}^{km})$$
to $\simp(\Gamma^{(m)}_{A})$.\\
As above, for every $k \ge 1$ we have isomorphisms
\begin{equation}\label{eqn:simpksyms}
\begin{gathered}
\mathfrak{S}_{k} \colon \prod_{i,j \in V(\Gamma^{(k)}_{A})}\sym(E_{i,j}(\Gamma^{(k)}_{A})) \stackrel{\cong}\longrightarrow \simp(\Gamma^{(k)}_{A})\\
\mathfrak{S}_{k}^{\ev} \colon \prod_{i,j \in V(\Gamma^{(k)}_{A})}\alt(E_{i,j}(\Gamma^{(k)}_{A})) \stackrel{\cong}\longrightarrow \simp_{\ev}(\Gamma^{(k)}_{A}).
\end{gathered}
\end{equation}
For a proof of the following, see \cite[Prop. 5.1]{HKS}.
\begin{proposition}
For any $k,m \ge 1$, the map $i_{m,k}$ takes $\simp_{\ev}(\Gamma^{(m)}_{A})$ into $\simp_{\ev}(\Gamma^{(km)}_{A})$.
\end{proposition}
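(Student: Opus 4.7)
The plan is to unpack the inclusion map $i_{m,k}$ explicitly in terms of how it acts on simple symmetries, then reduce the statement to the standard fact that a product (and a disjoint union) of even permutations is even.

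First, I would identify edges of $\Gamma_A^{(km)}$ from $i$ to $j$ with paths of length $km$ in $\Gamma_A$, and decompose each such path uniquely as a concatenation $p_1 p_2 \cdots p_k$ of $k$ length-$m$ blocks. Concretely, for every sequence of intermediate vertices $\vec{v}=(v_0,v_1,\dots,v_k)$ with $v_0=i$, $v_k=j$, concatenation gives a bijection
\[
\bigsqcup_{\vec{v}} \prod_{l=1}^{k} E_{v_{l-1},v_l}(\Gamma_A^{(m)}) \; \stackrel{\cong}{\longrightarrow}\; E_{i,j}(\Gamma_A^{(km)}).
\]
Next, given $\tilde\tau \in \simp(\Gamma_A^{(m)})$ with $\tau = \mathfrak{S}_m^{-1}(\tilde\tau) \in \prod_{i,j} \sym(E_{i,j}(\Gamma_A^{(m)}))$, I would verify (from the fact that $\tilde\tau$ is a $0$-block code on the alphabet of length-$m$ paths) that $i_{m,k}(\tilde\tau)$ corresponds under $\mathfrak{S}_{km}$ to the simple symmetry that sends $p_1 p_2 \cdots p_k$ to $\tau(p_1)\tau(p_2)\cdots\tau(p_k)$. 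In particular, under the decomposition above, the restriction of $i_{m,k}(\tilde\tau)$ to the $\vec v$-piece is the direct product $\tau|_{v_0,v_1}\times \tau|_{v_1,v_2}\times\cdots\times \tau|_{v_{k-1},v_k}$.

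The final step is the sign computation. Assume $\tau \in \prod_{i,j}\alt(E_{i,j}(\Gamma_A^{(m)}))$, so each $\tau|_{v_{l-1},v_l}$ is even. Using the standard formula $\mathrm{sgn}(\sigma\times\rho)=\mathrm{sgn}(\sigma)^{|Y|}\mathrm{sgn}(\rho)^{|X|}$ for $\sigma\in\sym(X)$, $\rho\in\sym(Y)$, a direct product of even permutations is even regardless of the cardinalities involved; iterating, the restriction to each $\vec{v}$-piece is even. Since a disjoint union of even permutations is even, $i_{m,k}(\tilde\tau)$ acts by an even permutation on each $E_{i,j}(\Gamma_A^{(km)})$, so it lies in $\simp_{\ev}(\Gamma_A^{(km)})$.

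The only non-routine point is carefully checking the explicit description of $i_{m,k}$ on simple symmetries given in the middle paragraph; once that formula is in hand, the rest is a direct parity argument. I do not expect any significant obstacle beyond bookkeeping.
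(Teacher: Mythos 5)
Your argument is correct. The paper itself does not reprove this proposition; it simply cites \cite[Prop.\ 5.1]{HKS}, so there is no in-paper proof to compare against. Your route --- decomposing $E_{i,j}(\Gamma_A^{(km)})$ over intermediate-vertex sequences $\vec v$ into products $\prod_{l=1}^{k}E_{v_{l-1},v_l}(\Gamma_A^{(m)})$, observing that $i_{m,k}(\tilde\tau)$ acts on each piece as the product permutation $\tau|_{v_0,v_1}\times\cdots\times\tau|_{v_{k-1},v_k}$ because $\tau$ is vertex-fixing, and then invoking $\mathrm{sgn}(\sigma\times\rho)=\mathrm{sgn}(\sigma)^{|Y|}\mathrm{sgn}(\rho)^{|X|}$ together with multiplicativity of sign over disjoint pieces --- is exactly the natural direct argument, and every step checks out (including the edge cases where some $E_{v_{l-1},v_l}(\Gamma_A^{(m)})$ is empty or a singleton, which contribute trivially). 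The only point worth stating a bit more explicitly is that the decomposition of $E_{i,j}(\Gamma_A^{(km)})$ is preserved by $i_{m,k}(\tilde\tau)$ precisely because simple graph symmetries fix vertices, so the intermediate-vertex sequence $\vec v$ of a path is unchanged; you use this implicitly when you say the restriction to each $\vec v$-piece is the indicated product.
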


We consider the corresponding stabilized groups, defining the subgroups
$$\simpinf{A} = \bigcup_{m=1}^{\infty} \simp(\Gamma^{(m)}_{A}) \subset \autinf{A}$$
and
$$\simpaltinf{A} = \bigcup_{m=1}^{\infty}\simp_{\ev}(\Gamma^{(m)}_{A}) \subset \simpinf{A}.$$
Thus $\alpha \in \autinf{A}$ lies in $\simpinf{A}$ when  $\alpha$ is induced by a simple graph symmetry of $\Gamma^{(m)}_{A}$ for some $m\geq 1$, and $\alpha \in \simpaltinf{A}$ if for some $m\geq 1$, $\alpha$ is induced by a simple graph symmetry of $\Gamma^{(m)}_{A}$ which consists of only even permutations on every edge set for $\Gamma^{(m)}_{A}$.

\begin{lemma}\label{lemma:factorials}
Suppose $a_{k}, b_{k}$ are sequences of positive integers both converging to $\infty$ such that
$$\lim_{k \to \infty} \frac{a_{k}}{b_{k}} = c > 0.$$
Then
$$\lim_{k \to \infty} \frac{\log a_{k}!}{\log b_{k}!} = c.$$
\end{lemma}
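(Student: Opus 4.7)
The plan is to reduce the statement to a direct computation using Stirling's approximation $\log n! = n \log n - n + O(\log n)$ (valid for $n \geq 2$). Applied to $a_k$ and $b_k$, this gives
$$\frac{\log a_k!}{\log b_k!} = \frac{a_k \log a_k - a_k + O(\log a_k)}{b_k \log b_k - b_k + O(\log b_k)},$$
and dividing numerator and denominator by $b_k \log b_k$ rewrites the ratio as
$$\frac{\frac{a_k}{b_k}\cdot\frac{\log a_k}{\log b_k} \;-\; \frac{1}{\log b_k}\cdot\frac{a_k}{b_k} \;+\; O\!\left(\frac{\log a_k}{b_k \log b_k}\right)}{1 \;-\; \frac{1}{\log b_k} \;+\; O\!\left(\frac{1}{b_k}\right)}.$$

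The goal is then to check that every term other than $\frac{a_k}{b_k}\cdot\frac{\log a_k}{\log b_k}$ in the numerator and every term other than $1$ in the denominator tends to $0$. Since $b_k \to \infty$ (and hence $\log b_k \to \infty$) and $a_k/b_k \to c$ is bounded, the error terms $\frac{1}{\log b_k}\cdot\frac{a_k}{b_k}$, $\frac{\log a_k}{b_k \log b_k}$, $\frac{1}{\log b_k}$, and $\frac{1}{b_k}$ all vanish in the limit. For the leading factor, the hypothesis already gives $\frac{a_k}{b_k} \to c$, and writing $\log a_k = \log b_k + \log(a_k/b_k)$ yields
$$\frac{\log a_k}{\log b_k} = 1 + \frac{\log(a_k/b_k)}{\log b_k} \longrightarrow 1,$$
since $\log(a_k/b_k) \to \log c$ is bounded while $\log b_k \to \infty$.

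Combining these, the entire expression tends to $c \cdot 1 = c$, proving the lemma. I do not foresee a genuine obstacle here; the argument is an elementary exercise in Stirling's formula and the only mild care required is tracking that the constant $c > 0$ keeps $\log(a_k/b_k)$ bounded so the secondary ratio $\frac{\log a_k}{\log b_k}$ really goes to $1$ and not merely stays bounded.
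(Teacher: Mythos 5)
Your proof is correct and takes essentially the same approach as the paper: apply Stirling's formula, factor out $a_k/b_k$, and use the decomposition $\log a_k = \log b_k + \log(a_k/b_k)$ to see the remaining ratio tends to $1$.
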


\begin{proof}
Using Stirling's Formula we have
$$\frac{\log a_{k}!}{\log b_{k}!} = \frac{a_{k}\log a_{k} - a_{k} + O(\log a_{k})}{b_{k}\log b_{k} - b_{k} + O(\log b_{k})}$$
$$= \frac{a_{k}}{b_{k}} \left(\frac{\log a_{k} - 1 + \frac{1}{a_{k}} O(\log a_{k})}{\log b_{k} - 1 + \frac{1}{b_{k}} O(\log b_{k})} \right)$$
$$ = \frac{a_{k}}{b_{k}} \left(\frac{\log b_{k} + \log(\frac{a_{k}}{b_{k}}) - 1 + \frac{1}{a_{k}} O(\log a_{k})}{\log b_{k} - 1 + \frac{1}{b_{k}} O(\log b_{k})} \right) \longrightarrow c.$$

\end{proof}

Leading toward Theorem \ref{thm:psentropycalc} ahead, our immediate goal is to determine when, for a given shift of finite type $(X_{A},\sigma_{A})$, the subgroups $\simp_{ev}(\Gamma^{(k)}(A))$ belong to $PS_{C,D,r}$ for some $C,D,r$. With that in mind, we make the following definition.
\begin{definition}\label{def:Aadmissible}
Let $A$ be a primitive $\mathbb{Z}_{+}$-matrix. Given positive constants $C \le 1 \le D$ and $r \in \mathbb{N}$, we say $(C,D,r)$ is $A$-admissible if there exists $K \in \mathbb{N}$ such that for all $k \ge K$, $\simp_{\ev}(\Gamma^{(k)}_{A})$ belongs to the class $PS_{C,D,r}$.
\end{definition}

\begin{lemma}\label{lemma:stirling1}
Let $A$ be an $r \times r$ primitive $\mathbb{Z}_{+}$-matrix.
\begin{enumerate}
\item
There exists positive constants $C \le 1 \le D$ such that $(C,D,r^{2})$ is $A$-admissible.
\item
Suppose $B$ is an $s \times s$ primitive $\mathbb{Z}$-matrix and $r \le s$. Then there are positive constants $E \le 1 \le F$, $l \in \mathbb{N}$, and a primitive $s \times s$ $\mathbb{Z}_{+}$-matrix $A^{\prime}$ such that $(X_{A},\sigma_{A}^{l})$ is topologically conjugate to $(X_{A^{\prime}},\sigma_{A^{\prime}})$ and $(E,F,s^{2})$ is both $A^{\prime}$-admissible and $B$-admissible.
\end{enumerate}
\end{lemma}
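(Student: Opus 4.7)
The approach is to handle parts (1) and (2) separately: part (1) is a direct computation using Perron--Frobenius theory together with Lemma \ref{lemma:factorials}, while part (2) requires constructing $A'$ via a state splitting on a higher power of $A$ and then invoking part (1).

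For part (1), I would observe that $\simp_{\ev}(\Gamma^{(k)}_A)$ decomposes via $\mathfrak{S}_k^{\ev}$ as $\prod_{i,j \in V(\Gamma_A)} \alt(E_{i,j}(\Gamma^{(k)}_A))$, a product indexed by the $r^2$ ordered pairs of vertices, with $|E_{i,j}(\Gamma^{(k)}_A)| = (A^k)_{i,j}$. Primitivity of $A$ ensures that for $k$ large every entry $(A^k)_{i,j}$ is at least $5$, so each factor is finite simple non-abelian, giving condition (1) of Definition \ref{def:PSCDRentropy}. For condition (2), Perron--Frobenius theory provides positive constants $c_{i,j} = v_i u_j$ (products of right and left Perron eigenvector entries) with $(A^k)_{i,j}/\lambda_A^k \to c_{i,j}$; in particular the ratios $(A^k)_{i,j}/(A^k)_{i',j'}$ converge to $c_{i,j}/c_{i',j'} > 0$. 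Applying Lemma \ref{lemma:factorials} and noting $\log |\alt(m)| = \log m! - \log 2$, the corresponding ratios of $\log|\alt(E_{i,j}^{(k)})|$ share the same limits. Choosing $D$ just above the maximum of these limits and $C$ just below the minimum (these sandwich $1$ because the diagonal ratio is $1$) furnishes the required $A$-admissibility of $(C, D, r^2)$.

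For part (2), the key task is constructing $A'$. I would take $l$ large enough that every entry of $A^l$ is at least $s - r + 1$, which is possible by primitivity. Then in the graph $\Gamma^{(l)}_A$, a single out-splitting of a suitable vertex into $s - r + 1$ new vertices produces a graph on $s$ vertices whose edge shift is topologically conjugate to $(X_A, \sigma_A^l)$. Let $A'$ be its adjacency matrix; since state splittings preserve shift conjugacy and hence primitivity, $A'$ is a primitive $s \times s$ $\mathbb{Z}_+$-matrix. Applying part (1) to both $A'$ and $B$ yields admissibility constants $(C_{A'}, D_{A'}, s^2)$ and $(C_B, D_B, s^2)$ respectively. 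Setting $E = \min(C_{A'}, C_B)$ and $F = \max(D_{A'}, D_B)$, we have $E \le 1 \le F$, and since $PS_{C, D, s^2} \subseteq PS_{E, F, s^2}$ whenever $E \le C$ and $F \ge D$, the triple $(E, F, s^2)$ is simultaneously $A'$-admissible and $B$-admissible.

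The principal obstacle is the construction in part (2): I must ensure the state-splitting step produces an $s \times s$ primitive matrix $A'$ representing a power of $A$, with enough structure in $\Gamma^{(l)}_A$ to allow partitioning a vertex's outgoing edges into $s - r + 1$ non-empty classes. Perron--Frobenius growth of the entries of $A^l$ guarantees this as soon as $l$ is large enough; once $A'$ is in hand, the rest is bookkeeping with the admissibility constants furnished by part (1).
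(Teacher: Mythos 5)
Your proof is correct and follows essentially the same route as the paper's: for part (1), decompose $\simp_{\ev}(\Gamma_A^{(k)})$ via $\mathfrak{S}_k^{\ev}$ into a product of $r^2$ alternating groups, use Perron--Frobenius convergence of $(A^k)_{i,j}/\lambda_A^k$ to the entries of $vu$, and pass through Lemma \ref{lemma:factorials} to control the log-factorial ratios; for part (2), use state-splitting to produce a conjugate $s\times s$ primitive presentation of a power of $\sigma_A$, then apply part (1) to both $A'$ and $B$ and take the min/max of the resulting constants. The only difference is that you spell out the state-splitting step concretely (out-splitting one vertex into $s-r+1$ pieces, available once entries of $A^l$ are large), whereas the paper simply cites \cite[Sec.~2.4]{LindMarcus1995} for the existence of such an $A'$; your added detail is harmless and correct.
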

\begin{proof}
For part $(1)$, since $A$ is primitive there exists some $K_{1}$ such that for all $k \ge K_{1}$, every entry of $A^{k}$ is at at least five. Then for $k \ge K_{1}$ we have the decomposition
$$\mathfrak{S}_{k}^{\ev} \colon \prod_{i,j \in V(\Gamma^{(k)}_{A})}\alt(E_{i,j}(\Gamma^{(k)}_{A})) \stackrel{\cong}\longrightarrow \simp_{\ev}(\Gamma^{(k)}_{A})$$
where $|V(\Gamma^{(k)}_{A})| = r^{2}$, and each $\alt(E_{i,j}(\Gamma^{(k)}_{A}))$ is simple since $|E_{i,j}(\Gamma_{A}^{(k)})| = A^{k}_{i,j} \ge 5$. It suffices then to verify condition (2) in Definition \ref{def:PSCDRentropy}. Let $\lambda_{A}$ denote the Perron-Frobenius eigenvalue of $A$, let $u$ be a positive left eigenvector for $\lambda_{A}$, $v$ be a positive right eigenvector for $\lambda_{A}$, normalized so that $uv = 1$. Note that every entry of both $u$ and $v$ is positive. As a consequence of the Perron-Frobenius Theorem
$$\lim_{k \to \infty} \left(\frac{1}{\lambda_{A}}A\right)^{k} = vu$$
and this convergence happens exponentially fast. In other words, we may write
$$A^{k} = \lambda_{A}^{k} \cdot vu + F^{(k)}$$
where for every $i,j$, $F^{(k)}_{i,j} / \lambda_{A}^{k} \to 0$ as $k \to \infty$. It follows that for any pairs $i,j$ and $p,q$,
\begin{equation}\label{eqn:pfconv1}
\lim_{k \to \infty}\frac{A_{i,j}^{k}}{A_{p,q}^{k}} = \frac{(vu)_{i,j}}{(vu)_{p,q}}.
\end{equation}
\indent Now for any pair $i,j \in V(\Gamma_{A})$ we have
$$\log |\alt(E_{i,j}(\Gamma_{A}^{(k)}))| = \log \left(\frac{1}{2}|E_{i,j}(\Gamma_{A}^{(k)})|!\right) = \log \left(\frac{1}{2} A_{i,j}^{k}!\right).$$
Thus, given another pair $p,q \in V(\Gamma_{A})$ we have
$$\frac{\log |\alt(E_{i,j}(\Gamma_{A}^{(k)}))|}{\log |\alt(E_{p,q}(\Gamma_{A}^{(k)}))|} = \frac{\log \left(\frac{1}{2} A_{i,j}^{k}!\right)}{\log \left(\frac{1}{2} A_{p,q}^{k}!\right)}$$
$$ = \frac{\log \frac{1}{2} + \log \left(A_{i,j}^{k}!\right)}{\log \frac{1}{2} + \log \left(A_{p,q}^{k}!\right)}.$$
By Lemma \ref{lemma:stirling1} it follows that
$$\lim_{k \to \infty} \frac{\log |\alt(E_{i,j}(\Gamma_{A}^{(k)}))|}{\log |\alt(E_{p,q}(\Gamma_{A}^{(k)}))|} = \frac{(vu)_{i,j}}{(vu)_{p,q}}.$$
Since every entry of $vu$ is positive, there must exist some $K_{2} \in \mathbb{N}$ and positive constants $C \le 1 \le D$ such that for all $k \ge K_{2}$, and for any $i,j,p,q \in V(\Gamma_{A})$, we have
$$C \le \frac{\log |\alt(E_{i,j}(\Gamma_{A}^{(k)}))|}{\log |\alt(E_{p,q}(\Gamma_{A}^{(k)}))|} \le D.$$
This completes the proof of part $(1)$.\\
\indent For part $(2)$, using state splittings (see \cite[Sec. 2.4]{LindMarcus1995}), there exists $l \in \mathbb{N}$ and an $s \times s$ primitive $\mathbb{Z}_{+}$-matrix $A^{\prime}$ such that $(X_{A^{\prime}},\sigma_{A^{\prime}})$ is topologically conjugate to $(X_{A^{l}},\sigma_{A^{l}})$ which itself is topologically conjugate to $(X_{A},\sigma_{A}^{l})$. Now apply part $(1)$ to both $A^{\prime}$ and $B$ to get constants $E_{1},F_{1},E_{2},F_{2}$ such that $(E_{1},F_{1},s^{2})$ is $A^{\prime}$-admissible and $(E_{2},F_{2},s^{2})$ is $B$-admissible. Let $E_{3} = \min\{E_{1},E_{2}\}$ and $F_{3} = \max\{F_{1},F_{2}\}$. Then $(E_{3},F_{3},s^{2})$ is both $A^{\prime}$-admissible and $B$-admissible.
\end{proof}

We now have all the tools we need to compute $h_{PS_{C,D,r}}\left(\autinf{A},\sigma_{A}^{k} \right)$ for a mixing shift of finite type $(X_{A},\sigma_{A})$.

\begin{theorem}\label{thm:psentropycalc}
Let $(X_{A},\sigma_{A})$ be a non-trivial mixing shift of finite type, with $A$ an $r \times r$ primitive $\mathbb{Z}_{+}$-matrix. Let $0 \ne k \in \mathbb{Z}$ be such that $A^{|k|}$ contains an entry strictly greater than 2, and let $(C,D,r^{2})$ be $A$-admissible. Then
$$h_{PS_{C,D,r^{2}}}\left( \autinf{A},\sigma_{A}^{k} \right) = h_{top}(\sigma_{A}^{k}) = \log \left( \lambda_{A}^{|k|} \right).$$
\end{theorem}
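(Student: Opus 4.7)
The strategy is to sandwich $h_{PS_{C,D,r^{2}}}\bigl(\autinf{A},\sigma_{A}^{k}\bigr)$ between matching upper and lower bounds, both equal to $\log\lambda_{A}^{|k|}$. By Proposition \ref{prop:entropyproperties}(3) I may assume $k>0$. A non-trivial mixing shift of finite type is expansive and has periodic points dense in the strong sense required by Theorem \ref{thm:upperboundentropy}, so once a valid $\sigma_{A}^{k}$-locally $PS_{C,D,r^{2}}$ subgroup has been exhibited (ensuring the local entropy is defined), Theorem \ref{thm:upperboundentropy} supplies the upper bound
$$h_{PS_{C,D,r^{2}}}\bigl(\autinf{A},\sigma_{A}^{k}\bigr) \le h_{\htop}(\sigma_{A}^{k}) = k\log\lambda_{A}.$$

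For the lower bound I would take the witness subgroup
$$H = \bigcup_{n\ge 1} \simp_{\ev}\bigl(\Gamma_{A}^{(kn)}\bigr) \subseteq \simpaltinf{A},$$
a directed union under the inclusions $i_{kn_{1},n_{2}/n_{1}}$ whenever $n_{1}\mid n_{2}$, hence a subgroup. The technical heart of the lower bound is the identification
\begin{equation*}
H \cap C(\sigma_{A}^{kn}) = \simp_{\ev}\bigl(\Gamma_{A}^{(kn)}\bigr) \qquad \text{for every } n\ge 1.
\end{equation*}
Containment $\supseteq$ is immediate since $\simp_{\ev}(\Gamma_{A}^{(kn)}) \subseteq \aut(\sigma_{A}^{kn})$. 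The reverse reduces to the structural claim: if $g\mid m$ and $\alpha\in\simp_{\ev}(\Gamma_{A}^{(km)})\cap C(\sigma_{A}^{kg})$, then $\alpha$ lies in the image of $i_{kg,m/g}$. Writing $\alpha$ as the $km$-block permutation induced by some $\tau\in\prod_{i,j}\alt(E_{i,j}(\Gamma_{A}^{(km)}))$ and imposing $\alpha\sigma_{A}^{kg}=\sigma_{A}^{kg}\alpha$, the phase-matching between adjacent $km$-blocks (explicitly, the decomposition of the window $[nP+Q,(n+1)P+Q)$ across two $P$-blocks, where $P=km$ and $Q=kg$) forces $\tau$ to factor as the product of $m/g$ copies of a single permutation $\tau'$ of length-$kg$ paths, exhibiting $\alpha$ as the image of the simple graph symmetry induced by $\tau'$.

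Granting this identification, $H$ is $\sigma_{A}^{k}$-locally $PS_{C,D,r^{2}}$: finiteness and $PS_{C,D,r^{2}}$-membership of $H\cap C(\sigma_{A}^{kn})$ for large $n$ follow from $A$-admissibility of $(C,D,r^{2})$, and the hypothesis that some entry of $A^{k}$ is at least $3$ makes a corresponding factor $\alt(E_{i,j}(\Gamma_{A}^{(k)}))$ non-trivial, yielding $H\cap C(\sigma_{A}^{k})\neq\{e\}$. It remains to compute the growth rate. Using $|\simp_{\ev}(\Gamma_{A}^{(kn)})| = \prod_{i,j}\tfrac{1}{2}(A^{kn})_{i,j}!$ together with Perron--Frobenius, $(A^{kn})_{i,j}\sim c_{i,j}\lambda_{A}^{kn}$ with $c_{i,j}>0$, and Stirling's formula exactly as in the proof of Lemma \ref{lemma:stirling1}, one obtains $\log|\simp_{\ev}(\Gamma_{A}^{(kn)})| \sim \bigl(\sum_{i,j}c_{i,j}\bigr)\cdot kn\log(\lambda_{A})\cdot\lambda_{A}^{kn}$, and hence $\tfrac{1}{n}\log\log|\simp_{\ev}(\Gamma_{A}^{(kn)})|\to k\log\lambda_{A}$, matching the upper bound.

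The principal obstacle is the structural identity $\simp_{\ev}(\Gamma_{A}^{(km)})\cap C(\sigma_{A}^{kg}) = \simp_{\ev}(\Gamma_{A}^{(kg)})$ for $g\mid m$; the remaining work is a routine verification of the three defining conditions of a $g$-locally $\mathcal{P}$ subgroup together with a standard Perron--Frobenius-plus-Stirling asymptotic of the type already carried out in Lemma \ref{lemma:stirling1}.
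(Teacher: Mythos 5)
Your proof is correct and takes essentially the same approach as the paper: reduce to $k>0$ via Proposition \ref{prop:entropyproperties}(3), use Theorem \ref{thm:upperboundentropy} for the upper bound, and obtain the lower bound by evaluating the growth of $\simp_{\ev}(\Gamma_{A}^{(kn)})$ inside a stabilized simple-alternating witness subgroup (the paper takes $H=\simpaltinf{A}$ rather than your slightly smaller $\bigcup_{n}\simp_{\ev}(\Gamma_{A}^{(kn)})$, but this is inessential). The structural identification $H\cap C(\sigma_{A}^{kn})=\simp_{\ev}(\Gamma_{A}^{(kn)})$ that you flag as the technical heart is asserted without elaboration in the paper as well, so your phase-matching sketch is if anything a useful supplement rather than a deviation.
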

\begin{proof}
As noted in Remark \ref{remark:sftpentropyex}, the local $PS_{C,D,r^{2}}$ entropy of $\left( \autinf{A},\sigma_{A}^{k} \right)$ exists. We first show that it is enough to prove the result for $k \ge 1$. Indeed, if it holds for all $k \ge 1$, then by (3) of Proposition \ref{prop:entropyproperties}, for any $k < 0$ we have
$$h_{PS_{C,D,r^{2}}}\left( \autinf{A},\sigma_{A}^{k} \right) = h_{PS_{C,D,r^{2}}}\left( \autinf{A},\sigma_{A}^{-k} \right) = h_{top}(\sigma_{A}^{-k}) = h_{top}(\sigma_{A}^{k}).$$
Thus we assume from here on that $k \ge 1$. The system $(X_{A},\sigma_{A})$ satisfies the hypotheses of Theorem \ref{thm:upperboundentropy}, so we already have
$$h_{PS_{C,D,r^{2}}}\left( \autinf{A},\sigma_{A}^{k} \right) \le h_{top}(\sigma_{A}^{k}).$$
It remains to show that
$$h_{PS_{C,D,r^{2}}}\left( \autinf{A},\sigma_{A}^{k} \right) \ge h_{top}(\sigma_{A}^{k}).$$
For this, it suffices to show there exists a $\sigma_{A}^{k}$-locally $PS_{C,D,r^{2}}$ subgroup $H$ of $\autinf{A}$ such that
$$\limsup_{n \to \infty} \frac{1}{n} \log \log |H \cap C(\sigma_{A}^{kn})| \ge h_{top}(\sigma_{A}^{k}).$$
Consider $H = \simpaltinf{A}$. For any $n \ge 1$, we have
$$H \cap C(\sigma_{A}^{kn}) = \simp_{\ev}(\Gamma_{A}^{(kn)}).$$
Note that since $A^{k}_{p,q} \ge 3$ for some $p,q$, $\simp_{\ev}(\Gamma_{A}^{(k)})$ is not the trivial group, and $H \cap C(\sigma^{k}_{A}) \ne \{e\}$. Since $(C,D,r^{2})$ is $A$-admissible, it follows the subgroup $\simpaltinf{A}$ is a $\sigma_{A}^{k}$-locally $PS_{C,D,r^{2}}$ subgroup of $\autinf{A}$. We will show that
$$\limsup_{n \to \infty} \frac{1}{n} \log \log |\simp_{\ev}(\Gamma_{A}^{(kn)})| = \log(\lambda_{A}^{k}) = h_{top}(\sigma_{A}^{k}).$$
For any $n \ge 1$ sufficiently large,
$$\log \log |\simp_{\ev}(\Gamma_{A}^{(kn)})| = \log \log \prod_{i,j \in V(\Gamma_{A})}|\alt(E_{i,j}(\Gamma_{A}^{(kn)})|$$
$$ = \log \log \prod_{i,j \in V(\Gamma_{A})}\frac{1}{2}|E_{i,j}(\Gamma_{A}^{(kn)})|! = \log \log \prod_{i,j \in V(\Gamma_{A})}\frac{1}{2}A_{i,j}^{kn}!$$
$$ = \log \left( \sum_{i,j \in V(\Gamma_{A})} \log \left( \frac{1}{2}A_{i,j}^{kn}! \right) \right) = \log \left( \sum_{i,j \in V(\Gamma_{A})} \log \frac{1}{2} + \log \left( A_{i,j}^{kn}! \right) \right)$$
$$ \ge \log \left( \log \frac{1}{2} + \log \left( A_{1,1}^{kn}! \right) \right)$$
$$ = \log \left( \log (\lambda_{A}^{kn}!) \left( \frac{\log \frac{1}{2}}{\log (\lambda_{A}^{kn}!)} + \frac{\log (A_{1,1}^{kn}!)}{\log (\lambda_{A}^{kn}!)} \right) \right)$$
$$ = \log \log (\lambda_{A}^{kn}!) + \log \left( \frac{\log \frac{1}{2}}{\log (\lambda_{A}^{kn}!)} + \frac{\log (A_{1,1}^{kn}!)}{\log (\lambda_{A}^{kn}!)} \right).$$
By Perron Frobenius, $A_{1,1}^{kn} / \lambda_{A}^{kn} \to (vu)_{1,1} > 0$, so Lemma \ref{lemma:factorials} implies $\log(A_{1,1}^{kn}!) / \log(\lambda_{A}^{kn}!) \to (vu)_{1,1}$. Thus
$$\lim_{n \to \infty} \frac{1}{n}\log \left( \frac{\log \frac{1}{2}}{\log (\lambda_{A}^{kn}!)} + \frac{\log (A_{1,1}^{kn}!)}{\log (\lambda_{A}^{kn}!)} \right) = 0.$$
For the remaining term, using Stirling's Formula we have
$$\limsup_{n \to \infty} \frac{1}{n} \log \log (\lambda_{A}^{kn}!) = \limsup_{n \to \infty} \frac{1}{n} \log \left( \lambda_{A}^{kn} \log (\lambda_{A}^{kn}) - \lambda_{A}^{kn} +O(\log (\lambda_{A}^{kn})) \right)$$
\begin{equation}\label{eqn:greenbird1}
 = \limsup_{n \to \infty} \frac{1}{n} \left( \log (\lambda_{A}^{kn}) + \log \left( \log(\lambda_{A}^{kn}) - 1 + \frac{1}{\lambda_{A}^{kn}} O( \log (\lambda_{A}^{kn})) \right) \right).
\end{equation}
Since $\log(\lambda_{A}^{kn}) - 1 + \frac{1}{\lambda_{A}^{kn}} O( \log (\lambda_{A}^{kn})) = kn \log \lambda_{A} + - 1 + \frac{1}{\lambda_{A}^{kn}} O( \log (\lambda_{A}^{kn})) = O(n)$,
$$ \lim_{n \to \infty} \frac{1}{n}\log \left( \log(\lambda_{A}^{kn}) - 1 + \frac{1}{\lambda_{A}^{kn}} O( \log (\lambda_{A}^{kn})) \right) = 0.$$
Thus \eqref{eqn:greenbird1} becomes
$$\limsup_{n \to \infty} \frac{1}{n} \log \lambda_{A}^{kn} = \limsup_{n \to \infty} \frac{1}{n} kn \log \lambda_{A} = k \log \lambda_{A}.$$
\end{proof}

The following gives Theorem \ref{thm:entropycalcintro} as stated in the introduction.

\begin{corollary}
Let $(X_{A},\sigma_{A})$ be a non-trivial mixing shift of finite type. For any $0 \ne k \in \mathbb{N}$, there exists positive constants $E \le 1 \le F$ and $s \in \mathbb{N}$ such that
$$h_{PS_{E,F,s^{2}}}\left( \autinf{A},\sigma_{A}^{k} \right) = h_{top}(\sigma_{A}^{k}) = \log \left( \lambda_{A}^{|k|} \right).$$
\end{corollary}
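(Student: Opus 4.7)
The plan is to derive this corollary as an application of Theorem~\ref{thm:psentropycalc}, combined with Lemma~\ref{lemma:stirling1} and the leveled-isomorphism invariance of local $\mathcal{P}$ entropy from Proposition~\ref{prop:entropyproperties}(2). First, Lemma~\ref{lemma:stirling1}(1) applied to the primitive $r \times r$ matrix $A$ yields constants $E_{0} \le 1 \le F_{0}$ for which $(E_{0}, F_{0}, r^{2})$ is $A$-admissible. When $A^{|k|}$ already contains an entry strictly greater than $2$, the hypotheses of Theorem~\ref{thm:psentropycalc} are satisfied outright, and the theorem directly gives the equality with $s = r$, $E = E_{0}$, $F = F_{0}$; this handles the generic case.

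For the remaining edge case, where $A^{|k|}$ has every entry at most $2$ (which can only happen when $k$ is small relative to the entries of $A$), I would pass first to a topologically conjugate presentation for which the theorem's hypothesis does hold. To this end, I would apply Lemma~\ref{lemma:stirling1}(2) with an auxiliary matrix $B$ chosen so that $B$ itself contains an entry greater than $2$; a natural choice is $B = A^{N}$ for $N$ large, since $A$ is primitive with $\lambda_{A} > 1$. The lemma produces an integer $l$, a primitive $s \times s$ matrix $A'$, and constants $E \le 1 \le F$ such that $(X_{A}, \sigma_{A}^{l}) \cong (X_{A'}, \sigma_{A'})$ and $(E, F, s^{2})$ is $A'$-admissible. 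Proposition~\ref{prop:ratconjugate} together with Proposition~\ref{prop:entropyproperties}(2) then translate the conjugacy into a leveled isomorphism between $(\autinf{A}, \sigma_{A}^{lj})$ and $(\autinf{A'}, \sigma_{A'}^{j})$ for every $j \ge 1$; applying Theorem~\ref{thm:psentropycalc} to $A'$ at the appropriate integer $j$ and pulling back via this isomorphism then produces the required formula for $(\autinf{A}, \sigma_{A}^{k})$.

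The main obstacle I anticipate is arranging the divisibility $l \mid k$, so that $\sigma_{A}^{k}$ corresponds cleanly to $\sigma_{A'}^{k/l}$ under the conjugacy. This requires coordinating the state-splitting construction of \cite[Sec.~2.4]{LindMarcus1995} (used in the proof of Lemma~\ref{lemma:stirling1}(2)) with the given $k$, which is standard but somewhat delicate. An alternative way to bypass the issue altogether would be to directly exhibit, inside $\autinf{A}$, a $\sigma_{A}^{k}$-locally $PS_{E,F,s^{2}}$ subgroup built from the simple even symmetries of a well-chosen higher-power presentation of $A$, and to verify, via the Stirling-type estimates used in the proof of Theorem~\ref{thm:psentropycalc}, that it achieves the required doubly exponential growth rate $\log \lambda_{A}^{|k|}$.
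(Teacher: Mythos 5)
Your first case (when $A^{|k|}$ already has an entry strictly greater than $2$) is handled correctly and matches the theorem's hypotheses directly. The problem is the edge case, which is genuine — for instance, the golden mean shift $A = \begin{pmatrix} 1 & 1 \\ 1 & 0 \end{pmatrix}$ has $A$ and $A^{2}$ with all entries $\le 2$, so Theorem~\ref{thm:psentropycalc} does not apply to $(\autinf{A},\sigma_{A})$ or $(\autinf{A},\sigma_{A}^{2})$ out of the box.

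Your proposed fix via Lemma~\ref{lemma:stirling1}(2) does not close the gap. That lemma hands you an $l$ determined by the state-splitting construction, and there is no reason for $l$ to divide the given $k$. You flag this yourself but characterize it as "standard but somewhat delicate"; it is actually a real obstruction. If $l \nmid k$, the conjugacy $(X_{A},\sigma_{A}^{l}) \cong (X_{A'},\sigma_{A'})$ only gives you leveled isomorphisms $(\autinf{A},\sigma_{A}^{lj}) \cong (\autinf{A'},\sigma_{A'}^{j})$, so you can compute $h_{PS}(\autinf{A},\sigma_{A}^{lk})$ but not $h_{PS}(\autinf{A},\sigma_{A}^{k})$ — and there is no general inequality of the form $h_{\mathcal{P}}(G,g^{l}) = l\, h_{\mathcal{P}}(G,g)$ that would let you divide by $l$. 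Your second alternative (build a $\sigma_{A}^{k}$-locally $PS$ subgroup from simple even symmetries of a higher-power presentation) runs into the same wall: the defining condition $H \cap C(\sigma_{A}^{k}) \ne \{e\}$ forces you back to needing the level-$k$ presentation itself to have at least three parallel edges somewhere.

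The paper's actual route avoids the divisibility issue entirely. It uses the fact (via markers \cite[Sec.~2]{BLR88} and the construction in \cite[App.]{MR970546}) that any non-trivial mixing SFT admits a presentation by a primitive $\mathbb{Z}_{+}$-matrix $B$ with some entry strictly greater than $2$, with $(X_{A},\sigma_{A})$ and $(X_{B},\sigma_{B})$ topologically conjugate at level $1$. Since $B$ is nondegenerate, $B^{|k|}$ then has an entry strictly greater than $2$ for \emph{every} $k \ne 0$, so Theorem~\ref{thm:psentropycalc} applies to $(\autinf{B},\sigma_{B}^{k})$ directly, and the leveled isomorphism $(\autinf{A},\sigma_{A}^{k}) \cong (\autinf{B},\sigma_{B}^{k})$ induced by the level-$1$ conjugacy transports the computation back to $A$. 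This is the missing ingredient your argument would need; Lemma~\ref{lemma:stirling1}(2) is not a substitute for it.
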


\begin{proof}
Since $(X_{A},\sigma_{A})$ is non-trivial we have $h_{top}(\sigma_{A}) > 0$. Then there exists a primitive $\mathbb{Z}_{+}$-matrix $B$ such that $B$ contains an entry strictly greater than two, and $(X_{A},\sigma_{A})$ and $(X_{B},\sigma_{B})$ are topologically conjugate; this can be seen using the existence of markers (see \cite[Sec. 2]{BLR88}) together with the construction in \cite[App.]{MR970546}. Suppose $B$ is $s \times s$. By part (1) of Lemma \ref{lemma:stirling1} we may choose $E,F,s$ such that $(E,F,s^{2})$ is $B$-admissible. For any $k \ne 0$, there is then a topological conjugacy between $(X_{A},\sigma_{A}^{k})$ and $(X_{B},\sigma_{B}^{k})$ which gives an isomorphism of leveled group $\left( \autinf{A},\sigma_{A}^{k} \right) \cong \left( \autinf{B},\sigma_{B}^{k} \right)$. By part (2) of Proposition \ref{prop:entropyproperties} and Theorem \ref{thm:psentropycalc}, we then have
$$h_{PS_{E,F,s^{2}}}\left( \autinf{A},\sigma_{A}^{k} \right)  =  h_{PS_{E,F,s^{2}}}\left( \autinf{B},\sigma_{B}^{k} \right) = h_{top}(\sigma_{B}^{k}) = h_{top}(\sigma_{A}^{k}).$$
\end{proof}

\begin{remark}
In the case of a full shift $(X_{n},\sigma_{n})$, we may use $C=D=r=1$, in which case $PS_{1,1,1}$ is just the class of finite, simple, non-abelian groups. It turns out if one uses the class $\mathcal{S}$ of all finite simple groups, then $h_{\mathcal{S}}\left(\autinf{A},\sigma_{A}^{k}\right) = h_{top}(\sigma_{A}^{k})$ as well. \qed
\end{remark}

\begin{remark}
Suppose $A$ is a primitive $\mathbb{Z}_{+}$-matrix. Since $\aut(\sigma_{A})$ contains an isomorphic copy of every finite group (see \cite[Sec. 2]{BLR88}), it follows that for any $C,D,r$ and $k \ne 0$, the local $PS_{C,D,r}$ entropy of $\left( \autinf{A},\sigma_{A}^{k} \right)$ exists. We do not always know when this entropy will be positive though. By Lemma \ref{lemma:stirling1} and Theorem \ref{thm:psentropycalc}, there are some constants $C,D$ and $r \in \mathbb{N}$ such that the local $PS_{C,D,r^{2}}$ entropy of $\left(\autinf{A},\sigma_{A}^{j}\right)$ is finite and non-zero. However, we do not know whether, for any arbitrary $A$ and $r \in \mathbb{N}$, there must always exist $j \in \mathbb{N}$ and constants $C,D$ such that the local $PS_{C,D,r}$ entropy of $\left(\autinf{A},\sigma_{A}^{j}\right)$ is non-zero. \qed
\end{remark}

Theorem \ref{thm:psentropycalc} combined with the fact that local $PS_{C,D,r}$ entropy is an invariant of leveled isomorphism leads to the following.

\begin{theorem}\label{thm:levelisotoentropy}
Let $(X_{A},\sigma_{A})$ and $(X_{B},\sigma_{B})$ be non-trivial mixing shifts of finite type. Suppose for some $k, j \ne 0$ there is an isomorphism of leveled groups
$$\Phi \colon \left(\autinf{A},\sigma_{A}^{k}\right) \to \left(\autinf{B},\sigma_{B}^{j}\right).$$
Then
$$\frac{h_{top}(\sigma_{A})}{h_{top}(\sigma_{B})} \in \mathbb{Q}.$$
\end{theorem}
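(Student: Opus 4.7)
My plan is to combine the computation of local $PS$ entropy in Theorem \ref{thm:psentropycalc} with the leveled isomorphism invariance of Proposition \ref{prop:entropyproperties}(2). The main obstacle is that Theorem \ref{thm:psentropycalc} computes the local $PS_{C,D,r^{2}}$ entropy only when $A$ is $r \times r$, and the Krull--Remak--Schmidt theorem forces the classes $PS_{C,D,r^{2}}$ and $PS_{C,D,s^{2}}$ to be disjoint when $r \ne s$. If $A$ and $B$ have different sizes, the naive attempt to read off equal entropy values on the two sides of $\Phi$ fails because the natural classes available on each side do not match. Lemma \ref{lemma:stirling1}(2) is the tool designed to overcome this.

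The first step is to pass to matrices of a common size. Without loss of generality assume $A$ is $r \times r$ and $B$ is $s \times s$ with $r \le s$ (otherwise replace $\Phi$ by $\Phi^{-1}$). Applying Lemma \ref{lemma:stirling1}(2) produces an integer $l \ge 1$, a primitive $s \times s$ $\mathbb{Z}_{+}$-matrix $A'$ with $(X_{A'},\sigma_{A'})$ topologically conjugate to $(X_A,\sigma_A^l)$, and constants $E \le 1 \le F$ for which the triple $(E,F,s^{2})$ is simultaneously $A'$-admissible and $B$-admissible. The proof of Proposition \ref{prop:ratconjugate} then supplies a leveled isomorphism $(\autinf{A},\sigma_A^{lm}) \cong (\autinf{A'},\sigma_{A'}^m)$ for every $m \ge 1$.

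Next I would transport $\Phi$ across this conjugacy. Since $\Phi(\sigma_A^k) = \sigma_B^j$, for each $m \ge 1$ the map $\Phi$ restricts to a leveled isomorphism $(\autinf{A},\sigma_A^{klm}) \to (\autinf{B},\sigma_B^{jlm})$, and composing with the conjugacy above yields a leveled isomorphism $(\autinf{A'},\sigma_{A'}^{km}) \cong (\autinf{B},\sigma_B^{jlm})$. I would then choose $m$ large enough that both $(A')^{km}$ and $B^{jlm}$ contain an entry strictly greater than $2$, which is possible by primitivity of $A'$ and $B$.

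With these preparations, Theorem \ref{thm:psentropycalc} applies to both sides through the common class $PS_{E,F,s^{2}}$, giving
\[ h_{PS_{E,F,s^{2}}}\bigl(\autinf{A'},\sigma_{A'}^{km}\bigr) = km \log \lambda_{A'} = klm \log \lambda_A \]
and
\[ h_{PS_{E,F,s^{2}}}\bigl(\autinf{B},\sigma_B^{jlm}\bigr) = jlm \log \lambda_B, \]
where I have used $\lambda_{A'} = \lambda_A^l$, which holds because $(X_{A'},\sigma_{A'})$ and $(X_A,\sigma_A^l)$ have the same topological entropy. Proposition \ref{prop:entropyproperties}(2) then forces these two quantities to agree, so $klm \log \lambda_A = jlm \log \lambda_B$; cancelling $lm$ gives $h_{top}(\sigma_A)/h_{top}(\sigma_B) = j/k \in \mathbb{Q}$.
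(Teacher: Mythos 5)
Your proposal is correct and follows essentially the same route as the paper: reduce to a common matrix size via Lemma \ref{lemma:stirling1}(2), transport everything through leveled isomorphisms, apply Theorem \ref{thm:psentropycalc} on both sides with the common class $PS_{E,F,s^2}$, and invoke Proposition \ref{prop:entropyproperties}(2) to equate the two entropy values. The only cleanup needed is to carry absolute values (write $|km|$, $|jlm|$, $|j|/|k|$) since $k$ and $j$ are merely nonzero, not assumed positive; this matches how the paper writes $|lkp|$, $|ljp|$ and does not affect the argument.
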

\begin{proof}
Suppose $A$ is $r \times r$, $B$ is $s \times s$ and without loss of generality assume $r \le s$. By Lemma \ref{lemma:stirling1} there exists a primitive $\mathbb{Z}_{+}$-matrix $A^{\prime}$, $l \in \mathbb{N}$, positive constants $E \le 1 \le F$, and a topological conjugacy $\Psi \colon (X_{A},\sigma_{A}^{l}) \to (X_{A^{\prime}},\sigma_{A^{\prime}})$ such that $(E,F,s^{2})$ is both $A^{\prime}$-admissible and $B$-admissible. Given that $A^{\prime}$ and $B$ are both primitive, choose $p \in \mathbb{N}$ such that each of $(A^{\prime})^{p}$ and $B^{p}$ contain an entry strictly greater than two. The conjugacy $\Psi$ induces a leveled isomorphism
$$\Psi_{*} \colon \left(\autinf{A},\sigma_{A}^{l}\right) \to \left(\autinf{A^{\prime}},\sigma_{A^{\prime}}\right)$$
and hence also a leveled isomorphism
$$\Psi_{*} \colon \left(\autinf{A},\sigma_{A}^{lkp}\right) \to \left(\autinf{A^{\prime}},\sigma_{A^{\prime}}^{kp}\right).$$
Using (2) of Proposition \ref{prop:entropyproperties}, the fact that $(E,F,s^{2})$ is $A^{\prime}$-admissible, and Theorem \ref{thm:psentropycalc}, we have
$$h_{PS_{E,F,s^{2}}}\left( \autinf{A},\sigma_{A}^{lkp} \right) = h_{PS_{E,F,s^{2}}}\left(\autinf{A^{\prime}},\sigma_{A^{\prime}}^{kp}\right) = h_{top}(\sigma_{A^{\prime}}^{kp}) =h_{top}(\sigma_{A}^{lkp}).$$
The leveled isomorphism
$$\Phi \colon \left(\autinf{A},\sigma_{A}^{k}\right) \to \left(\autinf{B},\sigma_{B}^{j}\right)$$
also induces a leveled isomorphism
$$\Phi \colon \left(\autinf{A},\sigma_{A}^{lkp}\right) \to \left(\autinf{B},\sigma_{B}^{ljp}\right).$$
Again using (2) of Proposition \ref{prop:entropyproperties}, the fact that $(E,F,s^{2})$ is $B$-admissible, and Theorem \ref{thm:psentropycalc} we get
$$h_{PS_{E,F,s^{2}}}\left( \autinf{A},\sigma_{A}^{lkp} \right) = h_{PS_{E,F,s^{2}}}\left(\autinf{B},\sigma_{B}^{ljp}\right) = h_{top}(\sigma_{B}^{ljp}).$$
Altogether this gives
$$h_{top}(\sigma_{A}^{lkp}) = h_{top}(\sigma_{B}^{ljp})$$
so
$$|lkp| \cdot h_{top}(\sigma_{A}) = h_{top}(\sigma_{A}^{lkp}) = h_{top}(\sigma_{B}^{ljp}) = |ljp| \cdot h_{top}(\sigma_{B})$$
or equivalently,
$$\frac{h_{top}(\sigma_{A})}{h_{top}(\sigma_{B})} = \frac{|j|}{|k|} \in \mathbb{Q}.$$

\end{proof}

\begin{remark}
Another consequence of Theorem \ref{thm:psentropycalc} is the following. Let $(X_{A},\sigma_{A})$ and $(X_{B},\sigma_{B})$ be non-trivial mixing shifts of finite type and suppose there is a monomorphism of leveled stabilized automorphism groups
$$\left(\autinf{A},\sigma_{A}^{k}\right) \hookrightarrow  \left(\autinf{B},\sigma_{B}^{j}\right).$$
Then $h_{top}(\sigma_{A}^{k}) \le h_{top}(\sigma_{B}^{j})$. For example, there can not exist a leveled monomorphism
$$\left(\autinf{3},\sigma_{3}\right) \hookrightarrow  \left(\autinf{2},\sigma_{2}\right).$$
It is worth noting that a stabilized version of the Kim-Roush Embedding Theorem \cite[Theorem 4.2]{HKS} shows that, for any $n \ge 2$ and any mixing shift of finite type $(X_{A},\sigma_{A})$, there is a \emph{group} monomorphism (i.e. not a monomorphism of leveled groups)
$$\autinf{n} \hookrightarrow \autinf{A}.$$
We do not know, for any natural numbers $m < n$ both greater than two, whether there exists a monomorphism of leveled groups
$$\left(\autinf{m},\sigma_{m}\right) \hookrightarrow \left(\autinf{n},\sigma_{n}\right).$$
In fact, we do not know whether there can even exist a monomorphism of leveled non-stabilized automorphism groups
$$\left(\aut(\sigma_{m}),\sigma_{m}\right) \hookrightarrow \left(\aut(\sigma_{n}),\sigma_{n}\right).$$
\qed
\end{remark}

\subsection{Ghost centers}
Given a group $G$, a \emph{ghost center} is a subgroup $H \subset G$ such that $H \cap C(K) \ne \{e\}$ for every finitely generated subgroup $K \subset G$. A \emph{cyclic ghost center} is a subgroup $H$ which is both cyclic and a ghost center, and a \emph{maximal cyclic ghost center} is a cyclic ghost center $H$ such that if $H^{\prime}$ is another cyclic ghost center and $H \subset H^{\prime}$, then $H = H^{\prime}$.\\

\textbf{Example: } For the additive group $\mathbb{Q}$, any cyclic subgroup is of course a cyclic ghost center - but there are no maximal cyclic ghost centers (as there are no maximal cyclic subgroups in $\mathbb{Q}$).

\begin{proposition}\label{prop:isopreservesmcgc}
Suppose $\Psi \colon G_{1} \to G_{2}$ is an isomorphism of groups, and $H \subset G_{1}$ is a maximal cyclic ghost center. Then $\Psi(H)$ is a maximal cyclic ghost center in $G_{2}$.
\end{proposition}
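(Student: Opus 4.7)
The plan is to verify that each of the three properties defining a maximal cyclic ghost center is preserved by a group isomorphism. The image $\Psi(H)$ is cyclic since isomorphisms send generators to generators. The main content is therefore showing that $\Psi(H)$ is a ghost center in $G_2$ and that it remains maximal among such cyclic ghost centers.

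For the ghost center property, I would start with an arbitrary finitely generated subgroup $K \subset G_2$ and consider its preimage $\Psi^{-1}(K) \subset G_1$. Since $\Psi^{-1}$ is a group isomorphism, it carries a finite generating set of $K$ to a finite generating set of $\Psi^{-1}(K)$, so $\Psi^{-1}(K)$ is finitely generated. The key observation is that group isomorphisms commute with the formation of centralizers, in the sense that $\Psi(C_{G_1}(L)) = C_{G_2}(\Psi(L))$ for any subset $L \subset G_1$; in particular $\Psi(C(\Psi^{-1}(K))) = C(K)$. Applying the ghost center hypothesis to $\Psi^{-1}(K)$ gives a non-identity element $h \in H \cap C(\Psi^{-1}(K))$, and then $\Psi(h)$ is a non-identity element of $\Psi(H) \cap C(K)$.

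For maximality, suppose $H' \subset G_2$ is a cyclic ghost center with $\Psi(H) \subset H'$. Then $H \subset \Psi^{-1}(H')$, and by the symmetric argument above applied to the isomorphism $\Psi^{-1}$, the subgroup $\Psi^{-1}(H')$ is a cyclic ghost center in $G_1$ containing $H$. Maximality of $H$ forces $\Psi^{-1}(H') = H$, and hence $H' = \Psi(H)$.

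There is no real obstacle here; the argument is essentially a direct verification using the two ingredients that isomorphisms preserve finite generation and conjugation. The only point worth being careful about is ensuring that the centralizer formula $\Psi(C(L)) = C(\Psi(L))$ is applied in the correct direction in each half of the proof; otherwise the proof is a routine unwinding of the definitions.
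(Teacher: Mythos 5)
Your proof is correct and follows essentially the same route as the paper's: pull back a finitely generated subgroup under $\Psi$, apply the ghost center property in $G_1$, push forward using the fact that $\Psi$ intertwines centralizers, and then use the same pull-back argument to transfer maximality. No gaps.
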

\begin{proof}
Note the group $\Psi(H)$ is cyclic. If $K$ is a finitely generated subgroup of $G_{2}$, then $K^{\prime} = \Psi^{-1}(K)$ is also finitely generated, so there exists $x \ne e \in H \cap C(K^{\prime})$. Then $e \ne \Psi(x) \in \Psi(H) \cap \Psi(C(K^{\prime})) = \Psi(H) \cap C(K)$, so $\Psi(H)$ is a cyclic ghost center. To see that it is maximal, suppose $H^{\prime}$ is a cyclic ghost center in $G_{2}$ with $\Psi(H) \subset H^{\prime}$. Then by the above argument, $\Psi^{-1}(H^{\prime})$ is a cyclic ghost center of $G_{1}$, and contains $H$. Since $H$ is maximal, this implies $H = \Psi^{-1}(H^{\prime})$, or equivalently, $\Psi(H) = H^{\prime}$.
\end{proof}

We'll now show that $\autinf{A}$ has maximal cyclic ghost centers, and characterize them. Define the root set of $\sigma_{A}$ in $\autinf{A}$ to be
$$\textnormal{root}(\sigma_{A}) = \{ \gamma \in \autinf{A} \mid \gamma^{k} = \sigma_{A}^{j} \textnormal{ for some } k,j \ne 0\}.$$
An element $\alpha \in \autinf{A}$ is \emph{rootless} if $\gamma^{k} = \alpha$ implies $|k| = 1$.
\begin{proposition}\label{prop:charcgc}
An automorphism $\gamma$ generates a cyclic ghost center in $\autinf{A}$ if and only if $\gamma \in \textnormal{root}(\sigma_{A})$. Such a cyclic ghost center will be maximal if and only if $\gamma$ is rootless.
\end{proposition}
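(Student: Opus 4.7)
The plan is to prove both equivalences by reducing to Ryan's theorem that $Z(\aut(\sigma_{A}))=\langle\sigma_{A}\rangle$, together with an elementary observation about containments of cyclic groups.

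For the forward direction, suppose $\gamma^{k}=\sigma_{A}^{j}$ with $k,j\neq 0$. Given a finitely generated subgroup $K\subset\autinf{A}$, fix a finite generating set; each generator commutes with some $\sigma_{A}^{n_{i}}$, so letting $N$ be the least common multiple of the $n_{i}$ one has $K\subset\aut(\sigma_{A}^{N})$. Therefore $\sigma_{A}^{N}$, and hence $\sigma_{A}^{jN}=\gamma^{kN}$, commutes with every element of $K$, and $\gamma^{kN}\neq e$ since $\sigma_{A}$ has infinite order. This shows $\langle\gamma\rangle\cap C(K)\neq\{e\}$, so $\langle\gamma\rangle$ is a cyclic ghost center.

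For the converse, assume $\langle\gamma\rangle$ is a cyclic ghost center. The key step is to isolate a single finitely generated subgroup $K_{0}\subset\autinf{A}$ whose centralizer is forced into $\langle\sigma_{A}\rangle$. I would do this via a finite form of Ryan's theorem: there exist $\phi_{1},\ldots,\phi_{r}\in\aut(\sigma_{A})$ such that $C_{\aut(\sigma_{A})}(\phi_{1},\ldots,\phi_{r})=\langle\sigma_{A}\rangle$. Setting $K_{0}=\langle\sigma_{A},\phi_{1},\ldots,\phi_{r}\rangle$, the ghost-center hypothesis yields $j\neq 0$ with $\gamma^{j}\neq e$ and $\gamma^{j}\in C(K_{0})$. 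Commutation with $\sigma_{A}$ places $\gamma^{j}$ in $\aut(\sigma_{A})$, and commutation with the $\phi_{i}$'s then places it in $\langle\sigma_{A}\rangle$. Writing $\gamma^{j}=\sigma_{A}^{m}$, nontriviality forces $m\neq 0$, so $\gamma\in\textnormal{root}(\sigma_{A})$.

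For the maximality clause I would observe that a strict inclusion $\langle\gamma\rangle\subsetneq\langle\delta\rangle$ of cyclic groups is equivalent to $\gamma=\delta^{n}$ for some $|n|\geq 2$, and whenever $\gamma^{k}=\sigma_{A}^{j}$ any such $\delta$ automatically satisfies $\delta^{nk}=\sigma_{A}^{j}$. Hence $\delta\in\textnormal{root}(\sigma_{A})$ and, by the first equivalence, $\langle\delta\rangle$ is itself a cyclic ghost center. Thus $\langle\gamma\rangle$ is maximal among cyclic ghost centers precisely when no such $\delta$ with $|n|\geq 2$ exists in $\autinf{A}$, i.e.\ precisely when $\gamma$ is rootless.

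The main obstacle is justifying the finite form of Ryan's theorem used in the converse direction: the usual statement only asserts $Z(\aut(\sigma_{A}))=\langle\sigma_{A}\rangle$, whereas we need finitely many automorphisms whose joint centralizer is already $\langle\sigma_{A}\rangle$. This is not a formal consequence of the center statement, but should be extractable from the marker-automorphism construction in Ryan's original proof: one selects finitely many marker automorphisms of sufficient length so that any element commuting with all of them must agree with a fixed power of the shift on all long enough words, and hence coincide with that power of the shift.
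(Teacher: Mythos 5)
Your proof is correct and takes essentially the same route as the paper. The one obstacle you flag---the finitary form of Ryan's theorem, which (as you rightly note) is not a formal consequence of the usual center statement---is precisely what the paper invokes from Kopra \cite{Kopra2019}: for a non-trivial mixing SFT $(X_A,\sigma_A)$ there exists a finitely generated subgroup of $\aut(\sigma_A)$ whose centralizer in $\aut(\sigma_A)$ is exactly $\langle\sigma_A\rangle$, so you can cite this result rather than re-extract it from Ryan's marker construction.
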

\begin{proof}
Suppose $\gamma \in \textnormal{root}(\sigma_{A})$, so there exists $k,j \ne 0$ such that $\gamma^{k} = \sigma_{A}^{j}$. Since any finitely generated subgroup $K$ in $\autinf{A}$ is contained in $\autk{m}{A}$ for some $m$, $\gamma^{mk} =\sigma_{A}^{mj} \in C(K)$, and $\langle \gamma \rangle \cap C(K) \ne \{e\}$.\\
\indent Conversely, suppose $\gamma$ generates a cyclic ghost center in $\autinf{A}$, and $\gamma \in \aut(\sigma_{A}^{j})$. By \cite{Kopra2019}, there exists a finitely generated subgroup $K \subset \aut(\sigma_{A}^{j})$ such that $C(K) = \langle \sigma_{A}^{j} \rangle$. By assumption, there exists $k$ such that $\{e\} \ne \gamma^{k} \in C(K)$, so $\gamma^{k} = \sigma_{A}^{js}$ for some $s \ne 0$.\\
\indent The claim regarding maximality is clear, since $\langle \gamma \rangle \subset \langle \gamma^{\prime} \rangle$ if and only if $\gamma = (\gamma^{\prime})^{n}$ for some $n$.
\end{proof}
The following shows that there exist rootless roots of $\sigma_{A}$, and hence maximal cyclic ghost centers in $\autinf{A}$.
\begin{proposition}\label{prop:ghostcentersexist}
For any $j \ne 0$, there exists $k(j) \ge 1$ such that if $\alpha \in \autinf{A}$ satisfies $\alpha^{k} = \sigma_{A}^{j}$, then $|k| \le k(j)$. Thus for any $\gamma \in \textnormal{root}(\sigma_{A})$ there exists a maximal cyclic subgroup containing $\gamma$.
\end{proposition}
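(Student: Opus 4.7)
The plan is to bound the divisibility of $\sigma_A^j$ in $\autinf{A}$ via the stabilized dimension representation $\pi^{(\infty)}$ of \cite{HKS}, and then to deduce the existence of maximal cyclic ghost centers from that divisibility bound.

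If $\alpha^{k} = \sigma_A^{j}$ in $\autinf{A}$, then $\alpha$ commutes with $\sigma_A^{j}$, so $\alpha \in \aut(\sigma_A^{j}) \subset \autinf{A}$ and in particular lies in the domain of $\pi^{(\infty)}$. Since the target of $\pi^{(\infty)}$ is abelian, applying it to $\alpha^{k} = \sigma_A^{j}$ produces
\[
k \cdot \pi^{(\infty)}(\alpha) \;=\; j \cdot \pi^{(\infty)}(\sigma_A)
\]
in the image. It therefore suffices to show that $\pi^{(\infty)}(\sigma_A)$ has only finitely many divisors in the image of $\pi^{(\infty)}$.

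For a full shift $(X_n,\sigma_n)$ this is automatic: \cite{HKS} identifies the image with the free abelian group $\mathbb{Z}^{\omega(n)}$, in which every nonzero element has bounded divisibility, and $\pi^{(\infty)}_{X_n}(\sigma_n)$ is such an element. For a general non-trivial mixing shift of finite type, verifying the analogous bounded-divisibility property for $\pi^{(\infty)}(\sigma_A)$ in its image is the essential technical step, and is the main obstacle. The natural approach is to use the description of $\pi^{(\infty)}$ under which $\sigma_A$ corresponds to a multiplication-by-$A$-type element of an automorphism group of the Krieger dimension group, and to control the divisibility of this element using the spectral data of the primitive matrix $A$.

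With the bound $|k| \le k(j)$ established, the second assertion follows readily. Given $\gamma \in \textnormal{root}(\sigma_A)$, fix $a, b \ne 0$ with $\gamma^{a} = \sigma_A^{b}$. Any cyclic subgroup $\langle \gamma' \rangle \supseteq \langle \gamma \rangle$ satisfies $\gamma = (\gamma')^{n}$ for some $n \ge 1$, whence $(\gamma')^{na} = \sigma_A^{b}$ and the first part forces $|na| \le k(b)$. Thus $n$ ranges over a finite set, only finitely many cyclic subgroups of $\autinf{A}$ contain $\langle \gamma \rangle$, and a maximal one exists.
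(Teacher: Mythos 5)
Your proof takes a genuinely different route from the paper's, but it contains a gap that you yourself flag as unresolved, and that gap is the whole ballgame. Your approach pushes $\alpha^{k}=\sigma_{A}^{j}$ through the stabilized dimension representation $\pi^{(\infty)}$ and tries to bound $|k|$ by bounding the divisibility of $j\cdot\pi^{(\infty)}(\sigma_{A})$ in the abelian image. This works for a full shift, where \cite{HKS} identifies the image as free abelian of rank $\omega(n)$ and one can read off the divisibility of $\pi^{(\infty)}(\sigma_{n})$. But for a general non-trivial mixing shift of finite type, establishing that $\pi^{(\infty)}(\sigma_{A})$ is non-torsion with bounded divisibility in the image of $\pi^{(\infty)}$ is precisely the ``essential technical step'' you concede you have not carried out, and it is not obvious: the stabilized dimension group and the image of $\pi^{(\infty)}$ are much less explicit than in the full shift case, and nothing you've written rules out $\pi^{(\infty)}(\sigma_{A})$ being infinitely divisible (or even torsion) there. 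Until that is settled the argument only proves the full shift case, so the proof as written is incomplete.

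The paper avoids the dimension representation entirely. It observes that $(X_{A},\sigma_{A}^{j})$ is topologically conjugate to the SFT $(X_{A^{j}},\sigma_{A^{j}})$ (taking a transpose when $j<0$), so any $\alpha$ with $\alpha^{k}=\sigma_{A}^{j}$ gives a $k$th root of the shift $\sigma_{A^{j}}$. Lind's results then give that $(\lambda_{A}^{j})^{1/k}$ must be a Perron number and that only finitely many $k$ can satisfy this. This is both more elementary and fully general, because it rests on spectral data of the primitive matrix rather than on the structure of the (stabilized) dimension group. If you want to salvage your route, you would need to prove the bounded-divisibility claim directly from the description of $\pi^{(\infty)}$, which seems at least as hard as the Perron-number argument; otherwise the clean fix is to replace that step with the conjugacy-plus-Lind argument. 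Your reduction in the second paragraph (bounding $n$ with $|na|\le k(b)$, hence no infinite ascending chain of cyclic subgroups above $\langle\gamma\rangle$) is correct, though the intermediate claim ``only finitely many cyclic subgroups of $\autinf{A}$ contain $\langle\gamma\rangle$'' is stronger than what you've shown and isn't needed; bounded $n$ already forbids infinite ascending chains and yields a maximal element.
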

\begin{proof}
First suppose $j > 0$. Then the homeomorphism $(X_{A},\sigma_{A}^{j})$ is conjugate to the shift of finite type $(X_{A^{j}},\sigma_{A^{j}})$. By \cite[Thm. 8]{lind84}, a necessary condition for $\sigma_{A^{j}}$ to have a $k$th root is that $\left( \lambda_{A^{j}} \right)^{1/k} = \left( \lambda_{A}^{j} \right)^{1/k}$ is a Perron number. By \cite[Thm. 4]{lind84}, there are only finitely many such $k$.\\
If instead $j < 0$, then letting $B = A^{T}$ (where $A^{T}$ denotes the transpose of $A$), $(X_{A},\sigma_{A}^{j})$ is conjugate to $(X_{B^{|j|}},\sigma_{B^{|j|}})$, and the above argument applies.
\end{proof}

\begin{remark}
The finitary Ryan's Theorem used in Proposition \ref{prop:charcgc} also holds for sofic shifts (see \cite[Theorem 5.9]{Kopra2020}), but does not extend to the class of all subshifts with specification, by \cite[Theorem 6.11]{Kopra2020}.\qed
\end{remark}

\subsection{Distinguishing stabilized automorphism groups}
 The goal is to now show how Theorem \ref{thm:isoentropyintroduction} follows from Theorem \ref{thm:levelisotoentropy}. The main idea is to use ghost centers to prove that any isomorphism of stabilized automorphism groups of non-trivial mixing shifts of finite type is also an isomorphism of leveled groups.
\begin{lemma}\label{lemma:isoupgrade}
If $\Psi \colon \autinf{A} \to \autinf{B}$ is an isomorphism, then for some $k,j \ne 0$, $\Psi(\sigma_{A}^{k}) = \sigma_{B}^{j}$. In particular, $\Psi$ is also an isomorphism of leveled groups $\Psi \colon \left(\autinf{A},\sigma_{A}^{k} \right) \to \left(\autinf{B},\sigma_{B}^{j} \right)$.
\end{lemma}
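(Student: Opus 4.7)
The plan is to apply the ghost center machinery directly to $\sigma_{A}$ itself, bypassing any need to first pass to a rootless generator. The key observation is that $\langle \sigma_{A} \rangle$ is already a cyclic ghost center of $\autinf{A}$: indeed $\sigma_{A} \in \textnormal{root}(\sigma_{A})$ tautologically, and for any finitely generated $K \subset \autinf{A}$ we have $K \subset \aut(\sigma_{A}^{m})$ for some $m$, so $\sigma_{A}^{m}$ is a nontrivial element of $\langle \sigma_{A} \rangle \cap C(K)$. (This is the forward direction of Proposition \ref{prop:charcgc}.)

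Next, I would note that the first half of the proof of Proposition \ref{prop:isopreservesmcgc} never actually uses maximality; the same argument shows that for any group isomorphism $\Psi \colon G_{1} \to G_{2}$ and any cyclic ghost center $H \subset G_{1}$, the image $\Psi(H)$ is a cyclic ghost center in $G_{2}$. Applying this to our $\Psi$, the subgroup $\langle \Psi(\sigma_{A}) \rangle = \Psi(\langle \sigma_{A} \rangle)$ is a cyclic ghost center in $\autinf{B}$. The reverse direction of Proposition \ref{prop:charcgc}, applied in $\autinf{B}$, then forces $\Psi(\sigma_{A}) \in \textnormal{root}(\sigma_{B})$, so by the definition of the root set there exist integers $k, j \ne 0$ with $\Psi(\sigma_{A})^{k} = \sigma_{B}^{j}$. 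Since $\Psi$ is a homomorphism, this is the desired identity
$$\Psi(\sigma_{A}^{k}) = \Psi(\sigma_{A})^{k} = \sigma_{B}^{j}.$$

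For the ``in particular'' clause, $\sigma_{A}$ has infinite order in $\autinf{A}$ (as a self-homeomorphism of a non-trivial subshift), so $\sigma_{A}^{k}$ generates an infinite cyclic subgroup and $(\autinf{A}, \sigma_{A}^{k})$ is a leveled group. The identity $\Psi(\sigma_{A}^{k}) = \sigma_{B}^{j}$ combined with $\Psi$ being an isomorphism then shows $\sigma_{B}^{j}$ also has infinite order, so $(\autinf{B}, \sigma_{B}^{j})$ is a leveled group, and $\Psi$ is a leveled isomorphism between them.

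I do not anticipate any real obstacle: once Propositions \ref{prop:charcgc} and \ref{prop:isopreservesmcgc} are in hand, the proof is essentially a two-step ``transport of structure'' along $\Psi$. The only minor point worth flagging is the small observation that the cyclic-ghost-center property itself (not just its maximal version) is preserved by $\Psi$, which is why one does not need to first refine $\langle \sigma_{A} \rangle$ to a maximal cyclic ghost center via Proposition \ref{prop:ghostcentersexist} before applying the isomorphism.
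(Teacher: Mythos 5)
Your proof is correct, and it is in fact a simplification of the paper's own argument. The paper's proof works with \emph{maximal} cyclic ghost centers: it invokes Proposition~\ref{prop:ghostcentersexist} to produce a rootless $\gamma \in \textnormal{root}(\sigma_{A})$, applies Proposition~\ref{prop:isopreservesmcgc} to send the maximal cyclic ghost center $\langle\gamma\rangle$ to a maximal cyclic ghost center $\langle\Psi(\gamma)\rangle$ in $\autinf{B}$, reads off $\Psi(\gamma)^{c}=\sigma_{B}^{d}$ from Proposition~\ref{prop:charcgc}, and then combines $\gamma^{a}=\sigma_{A}^{b}$ with $\Psi(\gamma)^{c}=\sigma_{B}^{d}$ to get $\Psi(\sigma_{A}^{bc})=\sigma_{B}^{da}$. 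You observe, correctly, that none of the maximality is needed for this lemma: $\sigma_{A}\in\textnormal{root}(\sigma_{A})$ trivially, so $\langle\sigma_{A}\rangle$ is already a cyclic ghost center by the forward direction of Proposition~\ref{prop:charcgc}; the first paragraph of the proof of Proposition~\ref{prop:isopreservesmcgc} shows (without invoking maximality) that $\Psi$ carries cyclic ghost centers to cyclic ghost centers; and the reverse direction of Proposition~\ref{prop:charcgc} then places $\Psi(\sigma_{A})$ in $\textnormal{root}(\sigma_{B})$. This eliminates the detour through a rootless generator and entirely avoids Proposition~\ref{prop:ghostcentersexist}, which is the one step that depends on Lind's Perron-number results about roots of shift maps. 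That makes your route not only shorter but also more robust: for the sofic case in Section~\ref{sec:soficstuff}, the paper notes that both Propositions~\ref{prop:ghostcentersexist} and~\ref{prop:charcgc} need to be transferred, whereas your argument would require only the analogue of Proposition~\ref{prop:charcgc} (whose reverse direction rests on Kopra's finitary Ryan theorem, which is already known for sofic shifts). The maximality notion and Proposition~\ref{prop:ghostcentersexist} are still of independent interest, but they are not load-bearing for this particular lemma.
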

\begin{proof}
By Propositions \ref{prop:ghostcentersexist} and \ref{prop:charcgc}, we may choose a maximal cyclic ghost center $H = \langle \gamma \rangle$ of $\autinf{A}$ where $\gamma^{a} = \sigma_{A}^{b}$ for some $a,b \ne 0$. By Proposition \ref{prop:isopreservesmcgc}, $\Psi(H)$ is a maximal cyclic ghost center of $\autinf{B}$, so there exists some $c,d \ne 0$ such that $\left(\Psi(\gamma)\right)^{c} = \sigma_{B}^{d}$. Then $\Psi(\sigma_{A}^{bc}) = \Psi(\gamma^{ac}) = (\Psi(\gamma))^{ac} = \sigma_{B}^{da}$.
\end{proof}

In general, an isomorphism $\Psi \colon \autinf{A} \to \autinf{B}$ need not satisfy $\Psi(\sigma_{A}) = \sigma_{B}$. For example, upon choosing a topological conjugacy $F \colon (X_{4},\sigma_{4}) \to (X_{2},\sigma_{2}^{2})$, the induced isomorphism $F_{*} \colon \autinf{4} \to \autinf{2}$ satisfies $F_{*}(\sigma_{4}) = \sigma_{2}^{2}$.\\

We now prove Theorem \ref{thm:isoentropyintroduction} from the introduction.

\begin{theorem}\label{thm:isotoentropy}
Let $(X_{A},\sigma_{A})$, $(X_{B},\sigma_{B})$ be non-trivial mixing shifts of finite type, and suppose there is an isomorphism of stabilized automorphism groups
$$\autinf{A} \cong \autinf{B}.$$
Then
$$\frac{h_{top}(\sigma_{A})}{h_{top}(\sigma_{B})} \in \mathbb{Q}.$$
\end{theorem}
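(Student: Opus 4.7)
The plan is to combine the two main ingredients developed immediately before the statement: the ghost centers machinery (giving Lemma \ref{lemma:isoupgrade}) and the leveled entropy invariant (Theorem \ref{thm:levelisotoentropy}). The point is that while an abstract group isomorphism $\Psi \colon \autinf{A} \to \autinf{B}$ need not carry $\sigma_A$ to $\sigma_B$, it must carry the distinguished infinite cyclic subgroup $\langle \sigma_A \rangle$ to something commensurable with $\langle \sigma_B \rangle$, and that commensurability is enough.

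First, I would invoke Lemma \ref{lemma:isoupgrade}: given $\Psi$, this lemma produces integers $k,j \ne 0$ with $\Psi(\sigma_A^k) = \sigma_B^j$, so that $\Psi$ is in fact an isomorphism of leveled groups
\[
\Psi \colon \left(\autinf{A}, \sigma_A^k\right) \to \left(\autinf{B}, \sigma_B^j\right).
\]
Recall that the lemma is proved by choosing a maximal cyclic ghost center $\langle \gamma \rangle$ of $\autinf{A}$ with $\gamma^a = \sigma_A^b$ (such exists by Propositions \ref{prop:ghostcentersexist} and \ref{prop:charcgc}), observing that $\Psi$ sends maximal cyclic ghost centers to maximal cyclic ghost centers (Proposition \ref{prop:isopreservesmcgc}), and then using the characterization of these ghost centers as lying in the root set of the shift.

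Second, with the leveled isomorphism in hand, I would apply Theorem \ref{thm:levelisotoentropy} directly to conclude
\[
\frac{h_{top}(\sigma_A)}{h_{top}(\sigma_B)} \in \mathbb{Q}.
\]
In fact the proof of Theorem \ref{thm:levelisotoentropy} gives the explicit value $|j|/|k|$ for this ratio, which is consistent with the obvious example where a conjugacy $(X_A,\sigma_A^k) \cong (X_B,\sigma_B^j)$ forces $k \cdot h_{top}(\sigma_A) = j \cdot h_{top}(\sigma_B)$.

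There is essentially no obstacle at this stage, since the two results cited do all the real work: Lemma \ref{lemma:isoupgrade} converts an unleveled isomorphism into a leveled one on suitable powers of the shift, and Theorem \ref{thm:levelisotoentropy} extracts rationality from the equality of local $PS_{C,D,r}$ entropies computed in Theorem \ref{thm:psentropycalc}. The deeper content — verifying that the entropy invariant detects the Perron eigenvalue, and that Ryan's theorem for stabilized groups (via Kopra) pins down the center-like structure needed for ghost centers — has already been assembled. Thus the present theorem is a short corollary, requiring only the two-step citation above.
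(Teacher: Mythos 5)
Your proposal matches the paper's proof exactly: apply Lemma \ref{lemma:isoupgrade} to upgrade the abstract isomorphism to a leveled isomorphism $\left(\autinf{A},\sigma_A^k\right) \cong \left(\autinf{B},\sigma_B^j\right)$, then invoke Theorem \ref{thm:levelisotoentropy}. The additional commentary correctly identifies where the real work lies (ghost centers via Kopra's finitary Ryan theorem, and the entropy computation), so nothing is missing.
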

\begin{proof}
Suppose $\Psi \colon \autinf{A} \to \autinf{B}$ is an isomorphism. By Lemma \ref{lemma:isoupgrade}, for some $k,j \ne 0$, $\Psi$ is an isomorphism of leveled groups
$$\Psi \colon \left(\autinf{A},\sigma_{A}^{k} \right) \to \left(\autinf{B},\sigma_{B}^{j} \right).$$
The result then follows from Theorem \ref{thm:levelisotoentropy}.
\end{proof}

As mentioned in the introduction, Theorem \ref{thm:isotoentropy} allows us to classify, up to isomorphism, the stabilized automorphism groups of full shifts.

\begin{corollary}
Given natural numbers $m,n \ge 2$, the stabilized groups $\autinf{m}$ and $\autinf{n}$ are isomorphic if and only if $m^{k} = n^{j}$ for some $k,j \in \mathbb{N}$.
\end{corollary}
\begin{proof}
If $m^{k} = n^{j}$ for some $k,j \in \mathbb{N}$, then since $(X_{m},\sigma_{m}^{k})$ is topologically conjugate to $(X_{m^k},\sigma_{m^{k}})$ and $(X_{n},\sigma_{n}^{j})$ is topologically conjugate to $(X_{n^{j}},\sigma_{n^{j}})$, Proposition \ref{prop:ratconjugate} implies $\autinf{m}$ and $\autinf{n}$ are isomorphic. For the other direction, suppose $\autinf{m}$ and $\autinf{n}$ are isomorphic. Since $h_{top}(\sigma_{m}) = \log m$ and $h_{top}(\sigma_{n}) = \log n$, Theorem \ref{thm:isotoentropy} implies $\frac{\log m}{\log n} = \frac{j}{k}$ for some $k,j \in \mathbb{N}$, and hence $m^{k} = n^{j}$.
\end{proof}

As another example, consider $A = \begin{pmatrix} 1 & 1 \\ 1 & 0 \end{pmatrix}$. The shift of finite type $(X_{A},\sigma_{A})$ is a topologically conjugate to the `golden mean' shift. Since $h_{top}(\sigma_{A}) = \log \left(\frac{1 + \sqrt{5}}{2} \right)$ and $\left(\frac{1 + \sqrt{5}}{2}\right)^{k}$ is never an integer, it follows from Theorem \ref{thm:isotoentropy} that the stabilized group $\autinf{A}$ is not isomorphic to the stabilized automorphism group of any full shift.

\section{Sofic shifts}\label{sec:soficstuff}
In this section, we will show how the proofs of our previous results can be adapted to the setting of mixing sofic shifts, giving the following.
\begin{theorem}
Let $(X,\sigma_{X})$ and $(Y,\sigma_{Y})$ be non-trivial mixing sofic shifts, and suppose there is an isomorphism of stabilized groups
$$\Psi \colon \aut^{(\infty)}(\sigma_{X}) \to \aut^{(\infty)}(\sigma_{Y}).$$
Then
$$\frac{h_{top}(\sigma_{X})}{h_{top}(\sigma_{Y})} \in \mathbb{Q}.$$

\end{theorem}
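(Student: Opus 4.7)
The plan is to mirror the proof of Theorem \ref{thm:isotoentropy} step by step, replacing each ingredient specific to shifts of finite type by its sofic analogue; the rationality of $h_{top}(\sigma_X)/h_{top}(\sigma_Y)$ will then drop out of the same entropy equation at the end. Concretely, three sub-claims must be transferred: the sofic analogues of Theorem \ref{thm:psentropycalc} (entropy computation), of Proposition \ref{prop:charcgc} and Proposition \ref{prop:ghostcentersexist} (existence and characterization of maximal cyclic ghost centers), and of Lemma \ref{lemma:isoupgrade} (upgrading an abstract group isomorphism to a leveled one).

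The upper bound $h_{PS_{C,D,r}}(\autinf{X},\sigma_X^k) \le h_{top}(\sigma_X^k)$ is immediate from Theorem \ref{thm:upperboundentropy}, since a non-trivial mixing sofic shift is expansive and has dense periodic points along any subsequence $a_s \to \infty$. The main obstacle is producing the matching lower bound, i.e.\ the sofic analogue of the construction of $\simpaltinf{B}$ in Theorem \ref{thm:psentropycalc}. I would proceed as follows: pass to a sufficiently high power of the shift so that a primitive right-resolving SFT cover $\pi \colon (X_B,\sigma_B) \to (X,\sigma_X)$ is available, and use Kopra's marker technology for sofic shifts to exhibit, for each large $m$, a subgroup $S_m \le \aut(\sigma_X^m)$ which is a direct product of alternating groups on marker-block sets whose cardinalities are controlled asymptotically by the entries of $B^m$. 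The union $\bigcup_m S_m$ then plays the role of $\simpaltinf{B}$: it is a $\sigma_X^k$-locally $PS_{E,F,s^2}$ subgroup for appropriate constants $E \le 1 \le F$ and $s \in \mathbb{N}$, and the Perron--Frobenius/Stirling computation in the proof of Theorem \ref{thm:psentropycalc} transfers verbatim to give $h_{PS_{E,F,s^2}}(\autinf{X},\sigma_X^k) \ge h_{top}(\sigma_X^k) = \log \lambda_B^{|k|}$. The delicate point is verifying that Kopra's markers descend to genuine automorphisms of $X$ while retaining a product--simple structure satisfying the admissibility bound in Definition \ref{def:PSCDRentropy}; this is the technical heart of the argument.

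For the leveled upgrade, Proposition \ref{prop:charcgc} extends by the sofic form of the finitary Ryan's Theorem \cite[Theorem 5.9]{Kopra2020} already recorded in the paper. For Proposition \ref{prop:ghostcentersexist}, one uses that the entropy of a mixing sofic shift is $\log \lambda$ with $\lambda$ the Perron number arising as the spectral radius of a primitive cover, so that Lind's bound \cite[Thm.~4]{lind84} on the number of Perron $k$-th roots of $\lambda^{|j|}$ still limits $k$ and hence produces maximal cyclic ghost centers for $\sigma_X$ inside $\autinf{X}$. With these in hand, Lemma \ref{lemma:isoupgrade} applies without modification: any isomorphism $\Psi \colon \autinf{X} \to \autinf{Y}$ must send a maximal cyclic ghost center to a maximal cyclic ghost center, which forces $\Psi(\sigma_X^k) = \sigma_Y^j$ for some $k,j \ne 0$ and thereby upgrades $\Psi$ to a leveled isomorphism $(\autinf{X},\sigma_X^k) \to (\autinf{Y},\sigma_Y^j)$. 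Running the computation of Theorem \ref{thm:levelisotoentropy} with the sofic entropy formula then yields $|k|\cdot h_{top}(\sigma_X) = |j|\cdot h_{top}(\sigma_Y)$, i.e.\ $h_{top}(\sigma_X)/h_{top}(\sigma_Y) \in \mathbb{Q}$.
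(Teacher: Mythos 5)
Your outline reproduces the paper's strategy at the structural level: reduce to the leveled case via ghost centers (using Kopra's finitary Ryan's Theorem for sofic shifts), get the upper bound for free from Theorem~\ref{thm:upperboundentropy}, and then supply a matching lower bound by building a suitable $\sigma^{L}$-locally $PS$ subgroup. The ghost-center and upper-bound steps are correct and match the paper. But the lower bound, which you yourself flag as ``the technical heart of the argument,'' is where your proposal has a genuine gap: you gesture at ``Kopra's marker technology for sofic shifts'' to produce, for each large $m$, a direct product of alternating groups inside $\aut(\sigma_X^m)$, without saying what these markers are or how they would yield a product of alternating groups satisfying the $PS_{E,F,s^2}$ admissibility bounds. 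Kopra's results are used in the paper only for the finitary Ryan's Theorem characterizing centralizers; they are not invoked for the construction of the $PS$ subgroup, and it is not clear that any off-the-shelf marker method gives what you need.

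What the paper actually does is more concrete and deals head-on with the obstruction that a right-resolving presentation $(\Gamma,\mathcal{L})$ of a sofic shift has repeated edge labels, so simple graph symmetries of $\Gamma^{(j)}$ need not induce automorphisms of $(X,\sigma_X^{j})$ (the map $\mathfrak{S}_j$ of \eqref{eqn:simpksyms} can fail to be injective). The fix is to inductively construct, for every $j$, a spanning subgraph $\mathcal{G}^{(j)} \subset \Gamma^{(j)}$ on which the labeling $\mathcal{L}^{(j)}$ \emph{is} injective, with three simultaneous requirements: each edge set $E_{p,q}(\mathcal{G}^{(j)})$ has size exactly $\lfloor \frac{1}{r}\min_{p,q} A^{j}_{p,q} \rfloor$; the subgraphs are nested compatibly, $\iota_{d,j}(\mathcal{G}^{(d)}) \subset \mathcal{G}^{(j)}$ for $d \mid j$; and the labeling is injective on $E(\mathcal{G}^{(j)})$. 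The counting argument that such subgraphs exist uses right-resolvingness (each label is shared by at most $r$ edges) together with a geometric-series bound $\sum 1/r^{p_i} < 1/(r^3-1)$ over the prime divisors of $j$, which requires the preliminary assumption that each entry of $A$ is at least $3r$. With these subgraphs one sets $H = \simp^{(\infty)}_{\ev}(\mathcal{G}) = \bigcup_j \simp_{\ev}(\mathcal{G}^{(j)})$, and the Perron--Frobenius/Stirling computation then proceeds as in Theorem~\ref{thm:psentropycalc}. Your proposal does not supply this construction or anything that obviously replaces it, so as written the lower bound is not established.
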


The proof proceeds along the same lines as that of Theorem \ref{thm:isotoentropy}, with a few details needed in some areas. First we show that Lemma \ref{lemma:isoupgrade} also applies to sofic shifts. By \cite[Theorem 5.9]{Kopra2020}, if $(Z,\sigma_{Z})$ is a non-trivial mixing sofic shift, then there exists a finitely generated subgroup $K$ in $\aut(\sigma_{X})$ such that the centralizer of $K$ is precisely $\langle \sigma_{X} \rangle$. The proofs of Propositions \ref{prop:ghostcentersexist} and \ref{prop:charcgc} then go through with appropriate modifications to the setting of sofic shifts, giving Lemma \ref{lemma:isoupgrade}. It suffices then to show that Theorem \ref{thm:levelisotoentropy} holds for non-trivial mixing sofic shifts. \\
\indent If $(X,\sigma_{X}), (Y,\sigma_{Y})$ are non-trivial mixing sofic shifts, we may choose right-resolving presentations $(\Gamma_{X},\mathcal{L}_{X}), (\Gamma_{Y},\mathcal{L}_{Y})$ respectively, such that the adjacency matrices $A(\Gamma_{X})$ and $B(\Gamma_{Y})$ are each primitive. Without loss of generality, assume that $|V(\Gamma_{X})| \le |V(\Gamma_{Y})|$. Since the in-splitting of a right-resolving labeled graph is right-resolving, using in-splittings (see e.g.
\cite[Sec. 2.4]{LindMarcus1995}), there exists $l \in \mathbb{N}$ and a right-resolving presentation $(\Gamma^{\prime}_{X},\mathcal{L}^{\prime}_{X})$ of $(X,\sigma_{X}^{l})$ such that $|V(\Gamma^{\prime}_{X})| = |V(\Gamma_{Y})|$ and the adjacency matrix $A(\Gamma^{\prime}_{X})$ is primitive. The proof of Theorem \ref{thm:levelisotoentropy} then goes through, once we prove the following lemma.
\begin{lemma}
Suppose $(Z,\sigma_{Z})$ is a non-trivial mixing sofic shift with a right-resolving presentation $(\Gamma_{Z},\mathcal{L}_{Z})$ of $(Z,\sigma_{Z})$ such that the adjacency matrix $A_{\Gamma_{Z}}$ is primitive. Suppose $|V(\Gamma_{Z})| = r$, and $L \in \mathbb{Z}$ is such that each entry of $A_{\Gamma_{Z}}^{L}$ is at least $3r$. Then
$$h_{PS_{1,1,r^{2}}} \left(\aut^{(\infty)}(\sigma_{Z}),\sigma_{Z}^{L} \right) = h_{top}(\sigma_{Z}^{L}).$$
\end{lemma}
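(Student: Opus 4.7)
The plan is to follow the template of Theorem~\ref{thm:psentropycalc}, modifying the construction of the $\sigma_{Z}^{L}$-locally $PS_{1,1,r^{2}}$ subgroup to respect the right-resolving labeling. The upper bound
$$h_{PS_{1,1,r^{2}}}\left(\aut^{(\infty)}(\sigma_{Z}),\sigma_{Z}^{L}\right) \le h_{top}(\sigma_{Z}^{L})$$
is immediate from Theorem~\ref{thm:upperboundentropy}: subshifts are expansive, and in a non-trivial mixing sofic shift the periodic points from sufficiently large periods are dense, so $\bigcup_{s}P_{a_{s}}(\sigma_{Z})$ is dense for every sequence $a_{s}\to\infty$. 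Existence of the local entropy follows from the sofic analog of \cite[Theorem 2.3]{BLR88}, which embeds every finite group into $\aut(\sigma_{Z})$.

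For the lower bound I would construct an explicit $\sigma_{Z}^{L}$-locally $PS_{1,1,r^{2}}$ subgroup $H$ by imitating the subgroup $\simpaltinf{A}$ from the SFT case. Set
$$m(n) = \min_{i,j \in V(\Gamma_{Z})}(A_{\Gamma_{Z}}^{nL})_{i,j};$$
by primitivity of $A_{\Gamma_{Z}}$ and Perron--Frobenius, $m(n)\sim c\, \lambda_{A_{\Gamma_{Z}}}^{nL}$ for some $c>0$, while the hypothesis $(A_{\Gamma_{Z}}^{L})_{i,j}\ge 3r$ guarantees $m(n)\ge 5$ for every $n$. For each ordered pair $(i,j)$ I would pick a set $S_{i,j}^{(n)}$ of exactly $m(n)$ paths of length $nL$ from $i$ to $j$ in $\Gamma_{Z}$, and realize $\alt(S_{i,j}^{(n)})$ as a subgroup of $\aut(\sigma_{Z}^{nL})$ via a synchronous path-permutation rule on occurrences. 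Because all $r^{2}$ factors have exactly equal cardinality, the product $\prod_{i,j}\alt(S_{i,j}^{(n)})$ lies in $PS_{1,1,r^{2}}$; taking the union over $n$ under the natural inclusions analogous to~(\ref{eq:def-im}) produces $H$.

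The main obstacle is to realize these alternating path-permutations as bona fide elements of $\aut(\sigma_{Z}^{nL})$: in the SFT case every simple graph symmetry of $\Gamma_{A}^{(nL)}$ automatically yields a $0$-block code on $X_{A}$, but for a sofic presentation the permutations must respect the label map $X_{A_{\Gamma_{Z}}}\to Z$. Right-resolvingness forces the labels of paths with a common starting vertex to be distinct, organizing the paths of length $nL$ into well-defined label equivalence classes that glue together across the $r$ starting vertices; selecting $S_{i,j}^{(n)}$ compatibly with these classes, and permuting label-equivalent paths synchronously, produces permutations that descend to automorphisms of $\sigma_{Z}^{nL}$. This is the label-compatible philosophy underlying Kopra's constructions, and the slack afforded by $(A_{\Gamma_{Z}}^{L})_{i,j}\ge 3r$ is exactly what enables the construction uniformly in $(i,j)$ and $n$.

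Once $H$ is in place, the final growth estimate
$$\limsup_{n\to\infty}\frac{1}{n}\log\log|H\cap C(\sigma_{Z}^{nL})| = L\log\lambda_{A_{\Gamma_{Z}}} = h_{top}(\sigma_{Z}^{L})$$
follows verbatim from the final display in the proof of Theorem~\ref{thm:psentropycalc}, via Stirling's formula and Lemma~\ref{lemma:factorials}.
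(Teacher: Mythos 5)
Your high-level structure matches the paper: the upper bound comes directly from Theorem~\ref{thm:upperboundentropy} (sofic shifts are expansive and have dense periodic points), and the lower bound should come from constructing an explicit $\sigma_{Z}^{L}$-locally $PS_{1,1,r^{2}}$ subgroup built from simple graph symmetries of the higher block presentations, followed by the Stirling calculation. You also correctly identify the obstacle --- permutations of edges (or paths) must respect the labeling map for a 0-block code to descend to the sofic shift rather than only acting on the SFT cover.

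However, the construction you sketch has a genuine gap: you do not address the \emph{nestedness} of the chosen sets across different levels. If for each $n$ you independently select sets $S_{i,j}^{(n)}$ of paths of length $nL$, then there is no reason the resulting groups $\prod_{i,j}\alt(S_{i,j}^{(n)})$ should be compatible under the inclusions $\aut(\sigma_{Z}^{dL}) \hookrightarrow \aut(\sigma_{Z}^{nL})$ for $d \mid n$. Without that compatibility, the union $H = \bigcup_{n} \prod_{i,j}\alt(S_{i,j}^{(n)})$ need not be a subgroup, and even if it were, there is no control over $H \cap C(\sigma_{Z}^{nL})$ --- so the verification that $H$ is $\sigma_{Z}^{L}$-locally $PS_{1,1,r^{2}}$ and that $|H \cap C(\sigma_{Z}^{nL})|$ has the claimed size breaks down. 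The paper's proof resolves this with an inductive construction of spanning subgraphs $\mathcal{G}^{(j)} \subset \Gamma^{(j)}$ satisfying three conditions: $|E_{p,q}(\mathcal{G}^{(j)})| = \lfloor \frac{1}{r}\min_{p,q}A^{j}_{p,q}\rfloor$ for all $p,q$; each $\mathcal{G}^{(j)}$ contains the image $\iota_{d,j}(\mathcal{G}^{(d)})$ for every divisor $d$ of $j$; and $\mathcal{L}^{(j)}$ is injective on $E(\mathcal{G}^{(j)})$. The $\frac{1}{r}$ rescaling (which you omit --- you use $m(n)$ rather than $\lfloor \frac{1}{r}m(n)\rfloor$) accounts for the fact that each label in a right-resolving presentation can be shared by up to $r$ paths, and the $3r$ hypothesis is used precisely so that in the inductive step the labels already spoken for by $\bigcup_{d \mid J}\iota_{d,J}(\mathcal{G}^{(d)})$ consume at most a $\frac{r}{r^{3}-1}$ fraction of $\min_{p,q}A^{J}_{p,q}$ (via the geometric series estimate $\sum_{i}r^{-p_{i}} < \frac{1}{r^{3}-1}$), leaving room to extend. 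None of this combinatorial bookkeeping appears in your proposal, and it is exactly where the sofic case requires new work beyond Theorem~\ref{thm:psentropycalc}.

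One further remark: your description of permuting label-equivalent paths \emph{synchronously} across several starting vertices is a different device from the paper's. The paper instead selects a subgraph on which the labeling is injective and permutes those (uniquely labeled, within the subgraph) edges while preserving sources and targets, obtaining a genuine direct product of $r^{2}$ alternating groups of equal size. If you permute whole label-classes synchronously, the factors indexed by $(i,j)$ are no longer independent, so it is not clear the resulting group lies in $PS_{1,1,r^{2}}$; this would need additional justification.
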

\begin{proof}
For ease of notation, we let $(\Gamma,\mathcal{L})$ denote the right-resolving presentation of $(Z,\sigma_{Z}^{L})$ given by paths of length $L$ through $(\Gamma_{Z},\mathcal{L}_{Z})$, and let $A$ denote the adjacency matrix of $\Gamma$. Note that for any $t \ge 1$, $\aut(\sigma_{Z})$ contains an isomorphic copy of $\prod_{i=1}^{t}\alt_{5}$ (see \cite{S27}), so for any $C,D,t$ the local $PS_{C,D,t}$ entropy of $\left(\aut^{(\infty)}(\sigma_{Z}),\sigma_{Z}^{L}\right)$ exists. Similar to the proof of Theorem \ref{thm:psentropycalc}, we may assume then that $L > 0$. We may also assume that $r \ge 2$.\\
\indent Since non-trivial mixing sofic shifts satisfy the hypotheses of Theorem \ref{thm:upperboundentropy}, we already have
$$h_{PS_{1,1,r^{2}}} \left(\aut^{(\infty)}(\sigma_{Z}),\sigma_{Z}^{L} \right) \le h_{top}(\sigma_{Z}^{L}).$$

It suffices then to find a $\sigma_{Z}^{L}$-locally $PS_{1,1,r^{2}}$ subgroup $H$ of $\aut^{(\infty)}(\sigma_{Z})$ such that
$$\limsup_{n \to \infty} \frac{1}{n} \log \log |H \cap C(\sigma_{Z}^{nL})| = h_{top}(\sigma_{Z}^{L}).$$
We use an approach similar to that of the proof of Theorem \ref{thm:psentropycalc}. For each $k$ we have a right-resolving presentation $(\Gamma^{(k)},\mathcal{L}^{(k)})$ of $(Z,\sigma_{Z}^{kL})$, where $\mathcal{L}^{(k)}$ is the labeling of $\Gamma^{(k)}$ given by paths of length $k$ through $(\Gamma,\mathcal{L})$. The only difficulty is that we do not have isomorphisms as in \eqref{eqn:simpksyms}, since $\Gamma^{(k)}$ may contain edges with repeated labels, so the maps $\mathfrak{S}_{k}$ defined in \eqref{eqn:simpksyms} may not be injective. To remedy this, we will define a modified version of $\simp^{(\infty)}(\Gamma)$ which will work.\\
\indent For any $j$ dividing $k$, and subgraph $\mathcal{G}^{(j)} \subset \Gamma^{(j)}$, denote by $\iota_{j,k}(\mathcal{G}^{(j)})$ the subgraph of $\Gamma^{(k)}$ given by paths of length $\frac{k}{j}$ through $\mathcal{G}^{(j)}$.\\
\indent First note that for any $j$, since $(\Gamma^{(j)},\mathcal{L}^{(j)})$ is also right-resolving, given $l \in E(\Gamma^{(j)})$ there are at most $r$ edges in $\Gamma^{(j)}$ labeled with $l$, no two of which start at the same vertex. Define $b_{j} = \frac{1}{r}\min_{p,q}A^{j}_{p,q}$. We will inductively define spanning subgraphs $\mathcal{G}^{(j)} \subset \Gamma^{(j)}$ for $j \in \mathbb{N}$ such that all of the following hold:
\begin{enumerate}
\item
$|E_{p,q}(\mathcal{G}^{(j)})| = \floor*{b_{j}}$ for all $p,q \in V(\Gamma)$.
\item
For all $d \mid j$, $\mathcal{G}^{(j)}$ contains $\iota_{d,j}(\mathcal{G}^{(d)})$.
\item
For all $j$, $\mathcal{L}^{(j)}|_{E(\mathcal{G}^{(j)})}$ is injective.
\end{enumerate}

Given this, we can then consider the subgroups
$$\simp_{\ev}(\mathcal{G}^{(j)}) = \{ \tilde{\tau} \mid \tau \in \prod_{p,q \in V(\Gamma)}\alt(E_{p,q}(\mathcal{G}^{(j)}))\}$$
where as before, $\tilde{\tau}$ denotes the automorphism given by the 0-block code coming from the simple graph symmetry $\tau$. Using condition $(2)$, for any $d \mid j$ the image of $\simp_{\ev}(\mathcal{G}^{(d)})$ under the inclusion map $\aut(\sigma_{Z}^{dL}) \hookrightarrow \aut(\sigma_{Z}^{jL})$ is contained in $\simp_{\ev}(\mathcal{G}^{(j)})$, and we define
$$\simp_{\ev}^{(\infty)}(\mathcal{G}) = \bigcup_{j=1}^{\infty}\simp_{\ev}(\mathcal{G}^{(j)}).$$
Condition $(3)$ implies for large enough $j$, each $\simp_{\ev}(\mathcal{G}^{(j)})$ is $PS_{1,1,r^{2}}$. Since each entry of $A$ is at least $3r$, $\simp_{\ev}(\mathcal{G}^{(1)})$ is non-trivial, and it follows that $\simp_{\ev}^{(\infty)}(\mathcal{G})$ is $\sigma_{Z}^{L}$-locally $PS_{1,1,r^{2}}$. It remains to check that $\limsup_{n \to \infty} \frac{1}{n} \log \log |\simp_{\ev}^{(\infty)}(\mathcal{G}) \cap C(\sigma_{Z}^{Ln})| = h_{top}(\sigma_{Z}^{L})$. Using condition $(1)$ we have
$$\log \log |\simp_{\ev}^{(\infty)}(\mathcal{G}) \cap C(\sigma_{Z}^{Ln})| = \log \log |\simp_{\ev}(\mathcal{G}^{(n)})|$$
$$ = \log \log \prod_{p,q \in V(\Gamma)}|\alt(E_{p,q}(\mathcal{G}^{(n)}))| = \log \log \prod_{p,q \in V(\Gamma)} \frac{1}{2}\left( E_{p,q}(\mathcal{G}^{(n)}) \right)!$$
$$ = \log \log \prod_{p,q \in V(\Gamma)} \frac{1}{2}\left( \floor*{\frac{1}{r}\min_{i,j}A^{n}_{i,j}} \right)!$$
$$ = \log \left[ \sum_{p,q \in V(\Gamma)} \log(\frac{1}{2}) + \log \left( \floor*{\frac{1}{r}\min_{i,j}A^{n}_{i,j}} \right)! \right]$$
$$ \ge \log \left[ \sum_{p,q \in V(\Gamma)} \log(\frac{1}{2}) + \log \left( \frac{1}{2r}\min_{i,j}A^{n}_{i,j} \right)! \right].$$
For $n$ sufficiently large, this last term is greater than
$$ \log \log \left( \frac{1}{2r}\min_{i,j}A^{n}_{i,j} \right)!.$$
The rest of the calculation then proceeds analogously to the one in Theorem \ref{thm:psentropycalc}; using Stirling's Formula we have
$$\limsup_{n \to \infty} \frac{1}{n} \log \log \left( \frac{1}{2r}\min_{i,j}A^{n}_{i,j} \right)! = \limsup_{n \to \infty} \frac{1}{n} \cdot n \log(\lambda_{A}) $$
$$ = \log(\lambda_{A}) = h_{top}(\sigma_{Z}^{L}).$$
All that remains is to construct the subgraphs $\mathcal{G}^{(j)}$ satisfying the listed properties. We do this inductively.\\
\indent First consider when $j=1$. Since the presentation $(\Gamma,\mathcal{L})$ is right resolving, each edge shares a label with at most $r$ other edges, each of which must start from a different vertex. Then given that each entry of $A$ is at least $3r$, $b_{1} = \frac{1}{r}\min_{p,q} A_{p,q} \ge 3$. It follows we may select subsets $F_{p,q} \subset E_{p,q}(\Gamma)$ such that $|F_{p,q}| = \floor*{\frac{1}{r}\min_{p,q} A_{p,q}}$ for each $p,q$, and $\mathcal{L}$, when restricted to $\bigcup_{p,q}F_{p,q}$, is injective.\\
\indent Now suppose for all $1 \le j \le J-1$ we have $\mathcal{G}^{(j)}$ satisfying $(1)$ though $(3)$ above. We will show how to define $\mathcal{G}^{(J)}$.\\
\indent Write $J = \prod_{i=1}^{s}p_{i}^{a_{i}}$ and let $d_{i} = \frac{J}{p_{i}}$. Fix $p,q \in V(\Gamma)$. For a number $x$, let $M_{x}$ denote the $r \times r$ matrix all of whose entries equal $x$. For each $i$, the number of edges in $E_{p,q}(\Gamma^{(J)})$ coming from $\iota_{d_{i},J}(\mathcal{G}^{(d_{i})})$ is $$\left(M_{\floor*{b_{d_{i}}}}^{p_{i}}\right)_{p,q} \le \left( M_{b_{d_{i}}}^{p_{i}} \right)_{p,q} = \left( \frac{1}{r^{p_{i}}}M^{p_{i}}_{\min_{p,q}A^{d_{i}}_{p,q}} \right)_{p,q}$$
$$ \le \frac{1}{r^{p_{i}}} \min_{p,q}\left(A^{J}\right)_{p,q}.$$

Using the inductive hypothesis, any edge in $E_{p,q}(\Gamma^{(J)})$ contained in $\iota_{d,J}(\mathcal{G}^{(d)})$ for some $d \mid J$ is contained $\iota_{d_{i},J}(\mathcal{G}^{(d_{i})})$ for some $d_{i}$. Thus the total number of edges in $E_{p,q}(\Gamma^{(J)})$ coming from $\iota_{d,J}(\mathcal{G}^{(d)})$ for all $d \mid J$ is bounded above by
\begin{equation*}\label{eqn:soficbound1}
\sum_{i=1}^{s}\frac{1}{r^{p_{i}}}\min_{p,q}\left(A^{J}\right)_{p,q}.
\end{equation*}
Note that $\sum_{i=1}^{s}\frac{1}{r^{p_{i}}} < \sum_{i=1}^{\infty} \frac{1}{r^{p_{i}}} < \sum_{i=1}^{\infty}\frac{1}{r^{3i}} = \frac{1}{r^{3}-1}$, so
\begin{equation}\label{eqn:soficbound2}
\sum_{i=1}^{s}\frac{1}{r^{p_{i}}}\min_{p,q}\left(A^{J}\right)_{p,q} < \left(\frac{1}{r^{3}-1}\right)\min_{p,q}\left(A^{J}\right)_{p,q}.
\end{equation}

Let $\mathcal{D} = \bigcup_{d \mid J}\iota_{d,J}(\mathcal{G}^{(d)})$ and define for each $p,q$
$$R_{p,q} = \{e \in E_{p,q}(\Gamma^{(J)}) \mid \mathcal{L}^{(J)}(e) = \mathcal{L}^{(J)}(f) \textnormal{ for some } f \in E_{u,v}(\mathcal{D}) \textnormal{ and some } u,v\}.$$
By \eqref{eqn:soficbound2} and the fact that $(\Gamma^{(J)},\mathcal{L}^{(J)})$ is right-resolving, $|R_{p,q}| \le \frac{r}{r^{3}-1}\min_{p,q}\left(A^{J}\right)_{p,q}.$

It follows that we may choose, for each $p,q$, subsets $E_{p,q}^{\prime} \subset E_{p,q}(\Gamma^{(J)})$ such that:
\begin{enumerate}
\item
$E_{p,q}(\mathcal{D}) \subset E_{p,q}^{\prime}$.
\item
$|E_{p,q}^{\prime}| = \floor*{\frac{1}{r}\min_{p,q}A^{J}_{p,,q}}$.
\item
$\mathcal{L}^{(J)}$ is injective on $\bigcup_{p,q}E_{p,q}^{\prime}$.
\end{enumerate}
Finally, we can define $\mathcal{G}^{(J)}$ to be the spanning subgraph of $\Gamma^{(J)}$ with edge set $\bigcup_{p,q}E_{p,q}^{\prime}$.

\end{proof}

\bibliographystyle{plain}
\bibliography{ssbib5}

\end{document}